\newtheorem{theorem}{Theorem}[section]
\newtheorem{lemma}[theorem]{Lemma}
\newtheorem{proposition}[theorem]{Proposition}
\newtheorem{corollary}[theorem]{Corollary}
\theoremstyle{definition}
\newtheorem{remark}[theorem]{Remark}
\theoremstyle{definition}
\theoremstyle{definition}
\newtheorem{definition}[theorem]{Definition}
 \newcommand{\R}{\mathbb{R}}
\renewcommand{\P}{\mathbb{P}}
\newcommand{\C}{\mathbb{C}}
\newcommand{\Z}{\mathbb{Z}}
\renewcommand{\L}{\mathbb{L}}
\renewcommand{\o}{\mathbb{\omega}}
\renewcommand{\t}{\tilde}
\newcommand{\msf}{\mathsf}
\newcommand{\mc}{\mathcal}
\newcommand{\mrm}{\mathrm}
\newcommand{\mbf}{\mathbf}
\renewcommand{\Im}{\mathfrak{Im}}
\renewcommand{\Re}{\mathfrak{Re}}
\newcommand{\eeq}{\end{equation}}
\newcommand{\K}{\ensuremath{\mathsf{K}}}
\renewcommand{\i}{\mathbf{i}}
\renewcommand{\epsilon}{\varepsilon}
\title{Tracy-Widom asymptotics for a river delta model}
\author[G. Barraquand]{Guillaume Barraquand}
\address{G. Barraquand,
	Columbia University,
	Department of Mathematics,
	2990 Broadway,
	New York, NY 10027, USA.}
\email{barraquand@math.columbia.edu}
\author[M. Rychnovsky]{Mark Rychnovsky}
\address{M. Rychnovsky, Columbia University,
	Department of Mathematics,
	2990 Broadway,
	New York, NY 10027, USA.}
\email{mrychnov@gmail.com}
\begin{document}
\begin{abstract}
We study an oriented first passage percolation model for the evolution of a river delta. This model is exactly solvable and occurs as the low temperature limit of the beta random walk in random environment. We analyze the asymptotics of an exact formula from \cite{RWRE} to show that, at any fixed positive time, the width of a river delta of length $L$ approaches a constant times $L^{2/3}$ with Tracy-Widom GUE fluctuations of order $L^{4/9}$. This result can be rephrased in terms of particle systems. We introduce an exactly solvable particle system on the integer half line and show that after running the system for only finite time the particle positions have Tracy-Widom fluctuations.
\keywords{KPZ universality, first passage percolation, exclusion processes, Tracy-Widom distribution, integrable probability.}
\end{abstract}
\maketitle

\section{Model and results}

\subsection{Introduction}

First passage percolation was introduced in 1965 to study a fluid spreading through a random environment \cite{hammersley1965first}. This model has motivated many tools in modern probability, most notably Kingman's sub-additive ergodic theorem (see the review \cite{fppbook} and references therein); it has attracted attention from mathematicians and physicists alike due to the simplicity of its definition, and the ease with which fascinating conjectures can be stated. 

\smallskip

The Kardar-Parisi-Zhang (KPZ) universality class has also become a central object of study in recent years \cite{universality}. Originally proposed to explain the behavior of growing interfaces in 1986 \cite{originalKPZ}, it has grown to include many types of models including random matrices, directed polymers, interacting particle systems, percolation models, and traffic models. Much of the success in studying these has come from the detailed analysis of a few exactly solvable models of each type. 

\smallskip

We study an exactly solvable model at the intersection of percolation theory and KPZ universality: Bernoulli-exponential first passage percolation (FPP). Here is a brief description (see Definition \ref{model} for a more precise definition). Bernoulli-exponential FPP models the growth of a river delta beginning at the origin in $\Z_{\geq 0}^2$ and growing depending on two parameters $a,b>0$. At time $0$, the river is a single up-right path beginning from the origin chosen by the rule that whenever the river reaches a new vertex it travels north with probability $a/(a+b)$ and travels east with probability $b/(a+b)$ (thick black line in Figure \ref{fig:littlepicture}). The line with slope $a/b$ can be thought of as giving the direction in which the expected elevation of our random terrain decreases fastest. 

\smallskip

\begin{figure}
\begin{center}
		\begin{tikzpicture}[scale=0.4]
	\draw (0,0) node[below left]{{\footnotesize $(0,0)$}};
	\clip(-.2,-.2) rectangle(14.4, 14.4);
	\draw[black!5!] (0,0) grid(16,16);
	
	\draw[line width=.7mm, black!15!]  (4,3) -- (5,3) -- (6,3) -- (7,3) -- (7,4) -- (8,4) -- (8,5) -- (9,5) -- (9,6) -- (10,6) -- (11,6) -- (11,7) -- (11,8) -- (11,9);
	\draw[line width=.9mm, black!25!] (3,7) -- (3,8) -- (3,9) -- (4,9) -- (5,9) -- (6,9) -- (6,10) -- (7,10) --   (7,11) -- (8,11) -- (9,11) -- (10,11) -- (10,12) -- (11,12) -- (12,12) -- (12,13) -- (12,14) -- (13,14) -- (13,15) -- (14,15);
	\draw[line width=1.2mm, black!50!] (2, 3) -- (3,3) -- (4,3) -- (4,4) -- (4,5) -- (5,5) -- (5,6) -- (6,6) -- (6,7);
	\draw[line width=1.6mm] (0,0) -- (0,1) -- (0,2) -- (1,2) -- (1,3) -- (2,3) -- (2,4) -- (2,5) -- (2,6) -- (3,6) -- (3,7) -- (4,7) --(5,7) --(6,7) --(7,7) -- (8,7) -- (8,8) -- (9,8) -- (9,9) -- (10,9) -- (11,9) -- (12,9) -- (12,10) -- (13,10) -- (14,10) -- (14,11) -- (14,12) -- (14,13) -- (15,13);
	\end{tikzpicture}
\end{center}
	\caption{A sample of the river delta (Bernoulli-exponential FPP percolation cluster) near the origin. The thick black random walk path corresponds to the river (percolation cluster) at time $0$. The other thinner and lighter paths correspond to tributaries added to the river delta (percolation cluster) at later times.}
	\label{fig:littlepicture}
\end{figure}
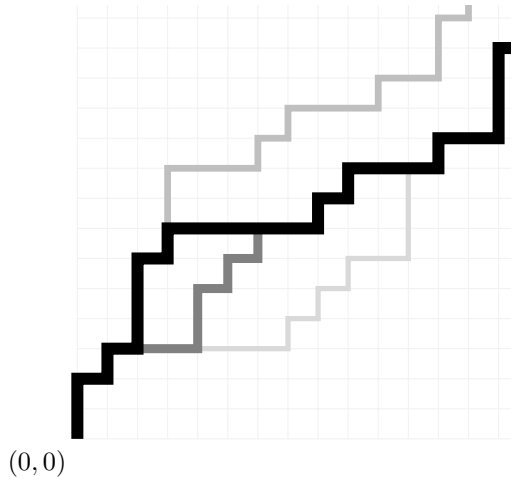

As time passes the river erodes its banks creating forks. At each vertex which the river leaves in the rightward (respectively upward) direction, it takes an amount of time distributed as an exponential random variable with rate $a$ (resp. $b$) for the river to erode through its upward (resp. rightward) bank. Once the river erodes one of its banks at a vertex, the flow at this vertex branches to create a tributary (see gray paths in Figure \ref{fig:littlepicture}). The path of the tributary is selected by the same rule as the path of the time $0$ river, except that when the tributary meets an existing river it joins the river and follows the existing path. The full path of the tributary is added instantly when the river erodes its bank.

\smallskip

In this model the river is infinite, and the main object of study is the set of vertices included in the river at time $t$, i.e. the percolation cluster. We will also refer to the shape enclosed by the outermost tributaries at time $t$ as the river delta (see Figure \ref{largescaleBEFPP} for a large scale illustration of the river delta).

\smallskip

The model defined above can also be seen as the low temperature limit of the beta random walk in random environment (RWRE) model \cite{RWRE}, an exactly solvable model in the KPZ universality class. Bernoulli-exponential FPP is particularly amenable to study because an exact formula for the distribution of the percolation cluster's upper border (Theorem \ref{exact} below) can be extracted from an exact formula for the beta RWRE \cite{RWRE}. We perform an asymptotic analysis on this formula to prove that at any fixed time, the width of the river delta satisfies a law of large numbers type result with fluctuations converging weakly to the Tracy-Widom GUE distribution (see Theorem \ref{main theorem}). Our law of large numbers result was predicted in \cite{RWRE} by taking a heuristic limit of \cite[Theorem 1.19]{RWRE}; we present this non-rigorous computation in Section \ref{main result}. We also give other interpretations of this result. In Section \ref{interpretations} we introduce an exactly solvable particle system and show that the position of a particle at finite time has Tracy-Widom fluctuations. 

\begin{figure} 	
\begin{center}
		\includegraphics[scale=.45]{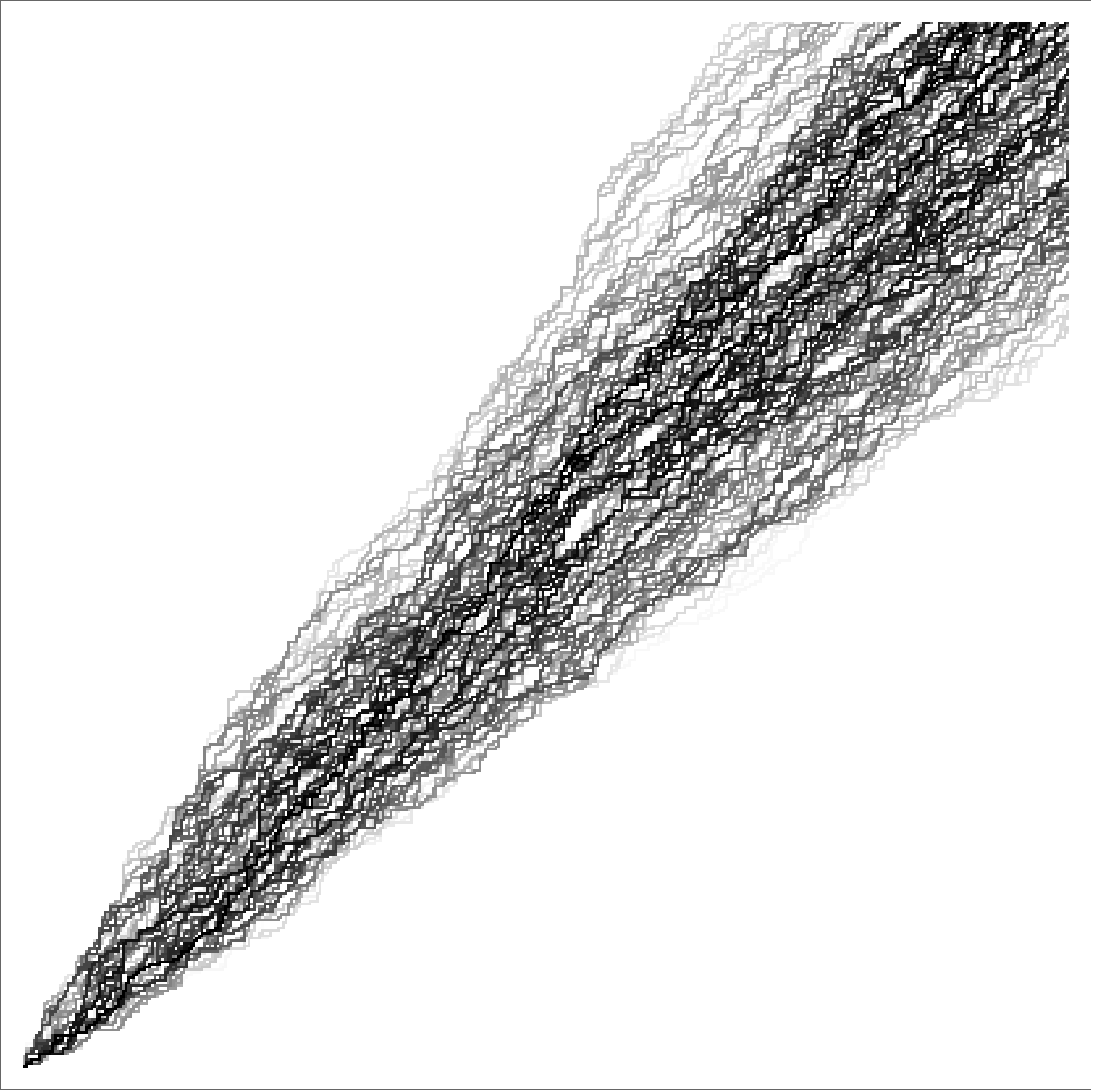}
\end{center}
	\caption{The percolation cluster for $400 \times 400$ Bernoulli-exponential FPP at time $1$ with $a=b=1$. Paths occurring earlier are shaded darker, so the darkest paths occur near $t=0$ and the lightest paths occur near $t=1$.}
	\label{largescaleBEFPP}
\end{figure}

\subsection{Definition of the model}
We now define the model more precisely in terms of first passage percolation following \cite{RWRE}.
\begin{definition}[Bernoulli-exponential first passage percolation] \label{model}
	Let $E_e$ be a family of independent exponential random variables indexed by the edges $e$ of the lattice $\Z_{\geq0}^2$. Each $E_e$ is distributed as an exponential random variable with parameter $a$ if $e$ is a vertical edge, and with parameter $b$ if $e$ is a horizontal edge. Let $(\zeta_{i,j})$ be a family of independent Bernoulli random variables with parameter $b/(a+b)$. We define the passage time $t_e$ of each edge $e$ in the lattice $\Z_{\geq 0}^2$ by
	$$t_e=\begin{cases}
	\zeta_{i,j} E_e \qquad \text{if $e$ is the vertical edge $(i,j) \to (i,j+1)$},\\
	(1-\zeta_{i,j}) E_e \qquad \text{if $e$ is the horizontal edge $(i,j) \to (i+1,j)$}.\\
	\end{cases}$$
\end{definition}

We define the point to point passage time $T^{\mathrm{PP}}(n,m)$ by 

$$T^{\mathrm{PP}}(n,m)=\min_{\pi:(0,0) \to (n,m)} \sum_{e \in \pi} t_e.$$
where the minimum is taken over all up-right paths from $(0,0)$ to $(n,m)$. We define the percolation cluster $C(t)$, at time $t$, by

$$C(t)=\left\{ (n,m): T^{\mathrm{PP}}(n,m) \leq t \right\}.$$
At each time $t$, the percolation cluster $C(t)$ is the set of points visited by a collection of up-right random walks in the quadrant $\Z_{\geq 0}^2$. $C(t)$ evolves in time as follows:

\begin{itemize}[leftmargin=*]
	
	\item At time $0$, the percolation cluster contains all points in the path of a directed random walk starting from $(0,0)$, because at any vertex $(i,j)$ we have passage time $0$ to either $(i,j+1)$ or $(i+1,j)$ according to the independent Bernoulli random variables $\zeta_{i,j}$. 
	\item At each vertex $(i,j)$ in the percolation cluster $C(t)$, with an upward (resp. rightward) neighbor outside the cluster, we add a random walk starting from $(i,j)$ with an upward (resp. rightward) step to the percolation cluster with exponential rate $(a)$ (resp. $b$). This random walk will almost surely hit the percolation cluster after finitely many steps, and we add to the percolation cluster only those points that are in the path of the walk before the first hitting point (see Figure \ref{fig:littlepicture}).
	
\end{itemize}
Define the height function $H_t(n)$ by 
\begin{equation}
H_t(n)=\sup \{ m \in \Z_{\geq 0}| T^{\mathrm{PP}}(n,m) \leq t)\},
\label{eq:defheighfunction}
\end{equation}
so that $(n,H_t(n))$ is the upper border of $C(t)$. 

\subsection{History of the model and related results}
Bernoulli-exponential FPP was first introduced in \cite{RWRE}, which introduced an exactly solvable model called the beta random walk in random environment (RWRE) and studied Bernoulli-exponential FPP as a low temperature limit of this model (see also the physics works \cite{thiery2015integrable,thiery2016exact} further studying the Beta RWRE and some variants). The beta RWRE was shown to be exactly solvable in \cite{RWRE} by viewing it as a limit of $q$-Hahn TASEP, a Bethe ansatz solvable particle system introduced in \cite{pov13}. The $q$-Hahn TASEP was further analyzed in \cite{borodin2015spectral,cor14,qhahnboson}, and was recently realized as a degeneration of the higher spin stochastic six vertex model \cite{aggarwal2017dynamical,borodin2017family,borodin2018higher,corwin2016stochastic}, so that Bernoulli-exponential FPP fits as well in the framework of stochastic spin models.

Tracy-Widom GUE fluctuations were shown in \cite{RWRE} for Bernoulli-exponential FPP (see Theorem \ref{heuristic theorem}) and for Beta RWRE. In the Beta RWRE these fluctuations occur in the quenched large deviation principle satisfied by the random walk and for the maximum of many random walkers in the same environment.

The connection to KPZ universality was strengthened in subsequent works. In \cite{corwin-gu} it was shown that the heat kernel for the time reversed Beta RWRE converges to the stochastic heat equation with multiplicative noise. In \cite{balazs-rassoul} it was shown using a stationary version of the model that a Beta RWRE conditioned to have atypical velocity has wandering exponent $2/3$ (see also \cite{chaumont2017fluctuation}), as expected in general for directed polymers in $1+1$ dimensions. The stationary structure of Bernoulli-exponential FPP was computed in \cite{thieryzerotemp} (In \cite{thieryzerotemp} Bernoulli-exponential FPP is referred to as the Bernoulli-exponential polymer).

The first occurrence of the Tracy-Widom distribution in the KPZ universality class dates back to the work of Baik, Deift and Johansson on longest increasing subsequences of random permutations \cite{baik1999distribution} (the connection to KPZ class was explained in e.g. \cite{prahofer2000universal}) and the work of Johansson on TASEP \cite{johansson2000shape}. 
In the past ten years, following Tracy and Widom's work on ASEP \cite{tracy2008integral,tracy2008fredholm,asep} and Borodin and Corwin's Macdonald processes \cite{borodin2014macdonald}, a number of exactly solvable $1+1$ dimensional models in the KPZ universality class have been analyzed asymptotically. Most of them can be realized as more or less direct degenerations of the higher-spin stochastic six-vertex model. This includes particle systems such as exclusion processes (q-TASEP \cite{borodin2014duality,barraquand2014phase,qtasep,orr2017stochastic} and  other models \cite{barraquand2016q,baik2017facilitated,ghosalpush,qhahnboson}), directed polymers (\cite{FreeEnergyCorwin,borodin2013log,borodin2015height,corwin2014strict,krishnan2016tracy,o2014tracy}), and the stochastic six-vertex model \cite{aggarwal2016phase,aggarwal2016current,barraquand2017stochastic,borodin2016stochastic,borodin2016asep}.

\subsection{Main result}\label{main result}

The study of the large scale behavior of passage times $T^{\mathrm{PP}}(n,m)$ was initiated in \cite{RWRE}. At large times, the fluctuations of the upper border of the percolation cluster (described by the height function $H_t(n)$) has GUE Tracy-Widom  fluctuations on the scale $n^{1/3}$.  
\begin{theorem} [{\cite[Theorem 1.19]{RWRE}}]\label{heuristic theorem} 
	Fix parameters $a,b>0$. For any $\theta>0$ and $x\in \R$, 
	\begin{equation}
	\lim_{n \to \infty} \P \left(\frac{H_{\tau(\theta)n} - \kappa(\theta)n}{\tilde\rho(\theta) n^{1/3}}\leq x  \right)=F_{\mathrm{GUE}}(x),\label{eq:limittheoremlargetime}
	\end{equation}
	where $F_{\textrm{GUE}}$ is the GUE Tracy-Widom distribution (see Definition \ref{tracywidomdist}) and $\kappa(\theta)$, $\tau(\theta)$, $\tilde\rho(\theta)= \frac{\kappa'(\theta)}{\tau'(\theta)} \rho(\theta)$ are functions defined in \cite{RWRE} by 
	\begin{align*}
	\kappa(\theta)&:=\frac{ \frac{1}{\theta^2}-\frac{1}{(a+\theta)^2}}{\frac{1}{(a+\theta)^2}-\frac{1}{(a+b+\theta)^2}},\\
	\tau(\theta)&:=\frac{1}{a+\theta}-\frac{1}{\theta}+\kappa(\theta) \left( \frac{1}{a+\theta}-\frac{1}{a+b+\theta} \right)=\frac{a(a+b)}{\theta^2(2a+b+2\theta)},\\
	\rho(\theta)&:=\left[ \frac{1}{\theta^3}-\frac{1}{(a+\theta)^3}+\kappa(\theta) \left(\frac{1}{(a+b+\theta)^3}-\frac{1}{(a+\theta)^3} \right) \right]^{1/3}.
	\end{align*}
\end{theorem}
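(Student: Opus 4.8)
\emph{Proof strategy.} The plan is to run a Laplace/steepest descent analysis of the exact formula of Theorem~\ref{exact} for the law of the height function. After substituting the conjectured scaling $m=\kappa(\theta)n+x\,\tilde\rho(\theta)\,n^{1/3}$, Theorem~\ref{exact} presents $\P\bigl(H_{\tau(\theta)n}(n)\le m\bigr)$ as a Fredholm determinant $\det(I+K)$ (in general, after a Mellin-Barnes resummation of a moment formula) whose kernel can be brought to the shape
\[
K(w,z)=\frac{1}{(2\pi\i)^2}\int\!\!\int\frac{g(w,z)}{z-w}\,e^{n\left(f_\theta(w)-f_\theta(z)\right)}\,dw\,dz ,
\]
with $f_\theta$ an explicit combination of logarithms of $a$, $b$ and the integration variables, and $g$ a prefactor of lower order in $n$.

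The analytic heart is the critical point analysis of $f_\theta$. The rational functions $\kappa(\theta)$ and $\tau(\theta)$ are chosen precisely so that $f_\theta$ develops a \emph{coalescing pair of critical points}: there is $w_\star=w_\star(\theta)$ with $f_\theta'(w_\star)=f_\theta''(w_\star)=0$ and $f_\theta'''(w_\star)\ne0$, and a direct computation identifies $f_\theta'''(w_\star)$ with a constant multiple of $\rho(\theta)^3$. (Checking that this double-root condition is equivalent to the displayed formulas for $\kappa$ and $\tau$ is the algebraic crux, but it is a finite calculation.) One then deforms the $w$- and $z$-contours so that each passes through $w_\star$ along the local steepest descent rays, at angles $\pm\pi/3$, so that $f_\theta(w)-f_\theta(w_\star)\approx\tfrac16 f_\theta'''(w_\star)(w-w_\star)^3$ near $w_\star$. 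Setting $w=w_\star+c\,n^{-1/3}\widehat w$ and $z=w_\star+c\,n^{-1/3}\widehat z$ for the appropriate constant $c$, the cubic terms become $\widehat w^3/3-\widehat z^3/3$, the linear term carrying the dependence on $x$ (produced by the $x\,\tilde\rho(\theta)n^{1/3}$ shift of $m$) survives the rescaling as $x(\widehat w-\widehat z)$, and, after the standard conjugation, $K$ converges pointwise to the shifted Airy kernel. Hence $\P\bigl(H_{\tau(\theta)n}(n)\le m\bigr)$ tends to the Fredholm determinant of the Airy kernel, which equals $F_{\mathrm{GUE}}(x)$ (Definition~\ref{tracywidomdist}).

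It is worth noting where the fluctuation scale $\tilde\rho(\theta)=\tfrac{\kappa'(\theta)}{\tau'(\theta)}\rho(\theta)$ comes from. The formula of Theorem~\ref{exact} naturally encodes Tracy-Widom fluctuations of the \emph{passage time}, of the form $T^{\mathrm{PP}}(n,\kappa(\theta)n)\approx\tau(\theta)n-\rho(\theta)n^{1/3}\chi_{\mathrm{GUE}}$. Since, at the level of the law of large numbers, $H_t(n)$ is the inverse of the passage-time profile $m\mapsto T^{\mathrm{PP}}(n,m)$, and that profile, parametrised by $\theta\mapsto\bigl(\kappa(\theta)n,\tau(\theta)n\bigr)$, has local slope $\tau'(\theta)/\kappa'(\theta)$, a fluctuation of order $\rho(\theta)n^{1/3}$ in the time variable becomes one of order $\tfrac{\kappa'(\theta)}{\tau'(\theta)}\rho(\theta)n^{1/3}$ in the height variable --- which is exactly what the substitution $m=\kappa(\theta)n+x\tilde\rho(\theta)n^{1/3}$ then turns back into $\P(\chi_{\mathrm{GUE}}\le x)$.

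The real difficulty is global, not local. One must (i) exhibit steepest descent contours that are admissible deformations of the ones in Theorem~\ref{exact} --- crossing no poles of the Gamma factors in $g$ and respecting the nesting that encodes the relevant event --- and along which $\mathrm{Re}\bigl(f_\theta(w)-f_\theta(z)\bigr)$ stays negative and decays as one moves away from $w_\star$; and (ii) establish tail bounds on $K(w,z)$ that, via Hadamard's inequality applied to the terms of the Fredholm expansion, justify taking the limit $n\to\infty$ inside the determinant. Building such contours uniformly in $n$ and controlling the contribution of their portions far from $w_\star$ is the main obstacle; it is precisely the estimate carried out in \cite{RWRE}. (Equivalently, the statement follows by first proving the Tracy-Widom limit for the quenched large deviations of the Beta RWRE and then passing to the zero-temperature limit, as in \cite{RWRE}; the analytic core --- steepest descent on a Gamma-function Fredholm determinant --- is the same.)
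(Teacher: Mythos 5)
This statement is not proved in the paper at all: it is imported from \cite{RWRE} (Theorem 1.19), with the accompanying remark only correcting the direction of the inequality and recasting the passage-time statement in terms of the height function, and your sketch --- fixed $\theta$, steepest-descent analysis of the Fredholm determinant of Theorem \ref{exact} at a coalescing double critical point, $n^{-1/3}$ rescaling to the Airy kernel, with the slope factor $\kappa'(\theta)/\tau'(\theta)$ converting passage-time fluctuations of size $\rho(\theta)n^{1/3}$ into height fluctuations of size $\tilde\rho(\theta)n^{1/3}$, and the global contour/tail estimates deferred to \cite{RWRE} --- is exactly the route of the original proof there, so you and the paper are in agreement. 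Two harmless slips: in the zero-temperature kernel of Theorem \ref{exact} the function $g$ is rational, with poles at $0$ and $-a-b$ (Gamma factors occur only in the positive-temperature Beta RWRE formula), and the determinant is $\det(I-\mathsf{K}_n)$ after the sign correction noted in the paper's remark; neither changes the strategy.
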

Note that as $\theta$ ranges from $0$ to $\infty$, $\kappa(\theta)$ ranges from $+\infty$ to $a/b$ and $\tau(\theta)$ ranges from $+\infty$ to $0$. 
\begin{remark}
	In \cite{RWRE} the limit theorem is incorrectly  stated as $$\lim_{n \to \infty} \P \left(\frac{\min_{i\leq n}T^{\textrm{PP}}(i, \kappa(\theta) n)-\tau(\theta) n}{\rho(\theta) n^{1/3}} \leq x \right)=F_{\mathrm{GUE}}(x),$$
	but following the proof in \cite[Section 6.1]{RWRE}, we can see that the inequality and the sign of $x$ should be reversed. Further, we have reinterpreted the limit theorem in terms of height function $H_t(n)$ instead of passage times  $T^{\textrm{PP}}(n,m)$ using the relation \eqref{eq:defheighfunction}. 
\end{remark}

In this paper, we are interested in the fluctuations of $H_t(n)$ for large $n$ but fixed time $t$. 
Let us scale $\theta$ in \eqref{eq:limittheoremlargetime} above as 
$$ \theta = \left( \frac{na(a+b)}{2t} \right)^{1/3},$$
so that 
$$ \tau(\theta)n = t +O(n^{-1/3}).$$
Let us introduce constants
\begin{equation}\lambda= \left( \frac{a(a+b)}{2t}\right)^{1/3}, \quad d=\frac{3a(a+b)}{2b \lambda}, \quad \sigma=\left(\frac{3a(a+b)\lambda}{2b^3}\right)^{1/3}. \label{constants} \end{equation}
Then, we have the approximations 
\begin{align*}
\kappa(\theta)n &= \frac{a}{b}n + dn^{2/3} + o(n^{4/9}),\\
\tilde\rho(\theta)n^{1/3} &= \sigma n^{4/9} + o(n^{4/9}). 
\end{align*}
Thus, formally letting $\theta$ and $n$ go to infinity in \eqref{eq:limittheoremlargetime} suggests that for a fixed time $t$, it is natural to scale the height function as
$$H_{t}(n) =  \frac{a}{b}n + dn^{2/3} + \sigma n^{4/9} \chi_n,$$
and study the asymptotics of the sequence of random variables $\chi_n$.

Our main result is the following. 
\begin{theorem} \label{main theorem}
	Fix parameters $a,b>0$.  For any $t>0$ and  $x \in \mathbb{R}$,
	$$\lim_{n \to \infty} \P\left(\frac{H_t(n)-\frac{a}{b} n-d n^{2/3}}{\sigma n^{4/9}} \leq x\right) = F_{\textrm{GUE}}(x),$$
	where $F_{\textrm{GUE}}$ is the GUE Tracy-Widom distribution.
\end{theorem}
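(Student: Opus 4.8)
The identity in Theorem~\ref{main theorem} cannot be obtained by merely inserting the scaling $\theta = (na(a+b)/2t)^{1/3}$ into Theorem~\ref{heuristic theorem}, since the convergence in \eqref{eq:limittheoremlargetime} is not uniform in $\theta$; instead one must redo the asymptotic analysis of the exact formula directly in this coupled scaling regime. The plan is therefore: (i) start from the exact Fredholm determinant representation of Theorem~\ref{exact}, which expresses $\P\big(H_t(n) < m\big) = \P\big(T^{\mathrm{PP}}(n,m) > t\big)$ as $\det(I+K)_{L^2(C)}$ for an explicit kernel $K = K_{t,n,m}$ given by a (double, or Mellin--Barnes type) contour integral whose integrand carries $\Gamma$-function and rational factors together with an exponential factor $e^{S(\cdot)}$; (ii) substitute $m = \lfloor \tfrac ab n + d n^{2/3} + \sigma x n^{4/9}\rfloor$; (iii) show that the resulting Fredholm determinant converges to $\det(I - K_{\Ai})_{L^2(x,\infty)} = F_{\textrm{GUE}}(x)$.

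The heart of the argument is a steepest-descent analysis in which the relevant critical point escapes to infinity. In the large-time regime of Theorem~\ref{heuristic theorem} the exponent $S$ has a double critical point at a fixed location (playing the role of $\theta$); here, with $t$ fixed and $n\to\infty$, that critical point instead diverges at rate $\lambda n^{1/3}$, with $\lambda$ as in \eqref{constants}. I would therefore rescale the integration variables, writing each of them as $\lambda n^{1/3}\hat z$ (so the critical point sits at $\hat z = 1$, say), after which the exponent takes the form $n^{\beta} G(\hat z) + \text{lower order}$ for a suitable $\beta$ and a limiting function $G$ with $G'(1) = G''(1) = 0$ and $G'''(1)\neq 0$. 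Matching scales --- the shift $m \mapsto m + \sigma x n^{4/9}$ perturbs $S$ linearly --- then forces the local scale $\hat z - 1 \asymp n^{-1/9}$, reproduces the cubic structure $G(\hat z) \approx G(1) + \tfrac{G'''(1)}{6}(\hat z - 1)^3$, and is exactly the computation that pins down the constants $d$ and $\sigma$; a parallel expansion of the $\Gamma$-function and rational prefactors (conjugating the kernel by a benign prefactor, which leaves the determinant unchanged) shows the rescaled kernel converges pointwise to $K_{\Ai}$ on compact sets.

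To upgrade pointwise convergence of the kernel to convergence of the Fredholm determinant I would deform $C$ and the inner contours to steepest-descent paths through the critical point (locally the standard rays at angles $\pm\pi/3$, resp.\ $\pm 2\pi/3$), and establish that $\Re\, S$ is maximized at the critical point and decays along the tails, uniformly in $n$; this yields an $n$-independent integrable (Gaussian or exponential) majorant for the rescaled kernel, and dominated convergence together with Hadamard's inequality for the Fredholm series then gives $\det(I+K) \to \det(I-K_{\Ai})_{L^2(x,\infty)}$. Finally, the $O(1)$ slack from the integer part of $m$ and the $O(n^{-1/3})$ discrepancy in $\tau(\theta)n = t + O(n^{-1/3})$ are absorbed into the $o(n^{4/9})$ window and do not affect the limit.

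The main obstacle is the steepest-descent analysis in the regime where the saddle point runs off to infinity: one has to control the $\Gamma$-function (or Mellin--Barnes) factors in the integrand as their arguments grow like $n^{1/3}$, and verify that admissible steepest-descent contours continue to exist and can be navigated in this doubly-scaled limit. This is precisely the work that the non-uniformity in $\theta$ of Theorem~\ref{heuristic theorem} prevents us from outsourcing, and it is where essentially all the technical effort lies; extracting the constants $\lambda, d, \sigma$ and identifying the limiting Airy kernel are, by contrast, routine once the correct rescaling is in place.
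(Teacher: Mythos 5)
Your overall strategy coincides with the paper's: start from the Fredholm determinant of Theorem \ref{exact}, insert $m=\lfloor \tfrac{a}{b}n+dn^{2/3}+\sigma x n^{4/9}\rfloor$, rescale the integration variables by $n^{1/3}$ so that the exponent becomes $n^{1/3}f_1(\cdot)$ plus lower order terms with a double critical point at a fixed location $\lambda$, zoom in at the scale $n^{-1/9}$ to expose the cubic structure, and then combine pointwise convergence of the rescaled kernel to an Airy-type kernel with steep descent bounds, Hadamard's inequality and dominated convergence, finishing with the $\det(1+AB)=\det(1+BA)$ identification of $F_{\mathrm{GUE}}$. All of that matches Sections 2 and 4 of the paper.

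However, your sketch omits, and partly misidentifies, the step that carries essentially all of the technical weight. The kernel of Theorem \ref{exact} contains no Gamma or Mellin--Barnes factors (those belong to the positive-temperature beta RWRE formula); here $g$ is purely rational times exponential, so ``controlling $\Gamma$-factors whose arguments grow like $n^{1/3}$'' is not where the difficulty lies, and in the rescaled variables the saddle point sits at the fixed point $\lambda$ rather than running off to infinity. The real obstacle is that after the change of variables $\o=n^{-1/3}u$, the $L^2$ contour must still encircle the pole at $\o=0$ while excluding the pole at $\o=-(a+b)n^{-1/3}$; these two poles collide as $n\to\infty$, so the contour is pinched through a gap of width $O(n^{-1/3})$, and on that portion the approximation $n^{1/3}t\o+h_n(\o)\approx n^{1/3}f_1(\o)$ fails completely, so ``steep descent for $f_1$'' gives no information there. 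A complete proof must construct an explicit family of contours $\mathcal{C}_n$ threading this gap and passing through $\lambda$, and prove quantitative bounds on $\Re[n^{1/3}t(z-\o)+h_n(z)-h_n(\o)]$ on the near-origin pieces at the scales $|\o|\asymp n^{-1/3}$ and $|\o|\asymp n^{\delta-1/3}$, $\delta\in(0,1/3)$; this is the content of Section 3 of the paper (Lemmas \ref{chaos control}, \ref{one third bound}, \ref{delta bound}, culminating in Lemma \ref{C bound}). Without it, the dominated-convergence step has no majorant near the origin and the argument as sketched does not close. A minor further correction: the formula to start from is $\P(H_t(n)<m)=\det(I-\mathsf{K}_n)_{\L^2(\mathcal{C}_0)}$, the sign $\det(I+\mathsf{K}_n)$ in the original reference being a known misprint.
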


Note that the heuristic argument presented above to guess the scaling exponents and the expression of constants $d$ and $\sigma$ is  not rigorous, since Theorem \ref{heuristic theorem} holds for fixed $\theta$. Theorem \ref{heuristic theorem}  could be extended without much effort to a weak convergence uniform in $\theta$ for $\theta$ varying in a fixed compact subset of $(0,+\infty)$. However the case of  $\theta$ and $n$ simultaneously  going to infinity requires more careful analysis. Indeed, for $\theta$ going to infinity very fast compared to $n$, Tracy-Widom fluctuations would certainly disappear as this would correspond to considering the height function at time $\tau(\theta)n\approx 0$, that is a simple random walk having Gaussian fluctuations on the $n^{1/2}$ scale. We explain in the next section how we shall prove Theorem \ref{main theorem}. 

The scaling exponents in Theorem 2 might seem unusual, although the preceding heuristic computation explains how they result from rescaling a model which has the usual KPZ scaling exponents. A similar situation occurs for scaling exponents of the height function of directed last passage percolation in thin rectangles \cite{baik2005gue,UniversalPropertyCloseToAxis} and for the free energy of directed polymers  \cite{UniversalityRectanglesCorwin} under the same limit. 

\subsection{Outline of the Proof} 
Recall that given an integral kernel $\msf{K}: \mathbb{C}^2 \to \mathbb{C}$, its Fredholm determinant is defined as
$$\det(1+\msf{K})_{L^2(\mathcal{C})}:=\frac{1}{2 \pi \i}\sum_{n=0}^{\infty} \frac{1}{n!} \int_{\mc{C}^n} \det[\msf{K}(x_i,x_j)]_{i,j=1}^n dx_1...dx_n.$$
To prove Theorem \ref{main theorem} we begin with the following Fredholm determinant formula for $\mathbb{P}(H_t(n)<m)$, and perform a saddle point analysis.
\begin{theorem}[{\cite[Theorem 1.18]{RWRE}}]\label{exact}
	$$\P(H_t(n) < m) =\det(I-\mathsf{K}_{n})_{\L^2(\mathcal{C}_0)},$$
	where $\mathcal{C}_0$ is a small positively oriented circle containing $0$ but not $-a-b$, and $\mathsf{K}_{n}: \L^2(\mathcal{C}_0) \to \L^2(\mathcal{C}_0)$ is defined by its integral kernel 
	\begin{align}
	\mathsf{K}_{n}(u,u') &=\frac{1}{2\pi \i} \int_{1/2 -i \infty}^{1/2+i \infty} \frac{e^{ts}}{s} \frac{g(u)}{g(s+u)} \frac{ds}{s+u-u'}, \quad \textrm{where} \label{eq:defK}\\
	g(u) &=\left(\frac{a+u}{u}\right)^n \left(\frac{a+u}{a+b+u}\right)^m \frac{1}{u}. 
	\end{align}
\end{theorem}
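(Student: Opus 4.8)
The formula of Theorem~\ref{exact} is not proved in isolation: in \cite{RWRE} it is obtained as a degeneration of an exact Fredholm determinant formula for the beta random walk in random environment, which itself descends from the $q$-Hahn TASEP of Povolotsky \cite{pov13}. I would follow the same route. The plan is to: (i) start from a Mellin--Barnes type Fredholm determinant formula for a suitable observable of $q$-Hahn TASEP, obtained either through the Macdonald process machinery of \cite{borodin2014macdonald} or by directly verifying the associated nested contour integral $q$-moment formulas; (ii) specialize the $q$-Hahn weights and take $q \to 1$ so that $q$-Hahn TASEP converges to the beta RWRE, carrying the Fredholm determinant along; and (iii) take the zero-temperature limit in which the beta polymer degenerates to Bernoulli-exponential FPP and the $q$-Hahn observable degenerates to the distribution function $\P\big(H_t(n) < m\big)$ of the height function (equivalently, to a tail probability of the passage time $T^{\mathrm{PP}}(n,m)$).

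Concretely, for $q$-Hahn TASEP with the appropriate choice of parameters one has an identity of the schematic form $\E\big[ 1/(\zeta q^{\,x_n + n};q)_\infty \big] = \det(I + \mathsf{K}^{q})$, where $\mathsf{K}^{q}$ is built from ratios of $q$-Pochhammer symbols integrated over a vertical Mellin--Barnes contour. Writing $q = e^{-\epsilon}$ and arranging that the Bernoulli parameter $b/(a+b)$ and the exponential rates $a,b$ arise as the $\epsilon \to 0$ limits of the $q$-Hahn weights, the $q$-Pochhammer products collapse---here without even needing the full $q$-Gamma asymptotics, since the relevant exponents $n$ and $m$ stay fixed---to the rational function $g(u) = \big(\tfrac{a+u}{u}\big)^n \big(\tfrac{a+u}{a+b+u}\big)^m \tfrac1u$ appearing in Theorem~\ref{exact}; the discrete contour becomes the line $\mathrm{Re}(s) = 1/2$; and, after tuning the spectral parameter $\zeta = \zeta(\epsilon)$ to the crossover scale, the factor $1/(\zeta q^{\,\cdot};q)_\infty$ produces the $e^{ts}/s$ in \eqref{eq:defK} while simultaneously, at the level of the observable, freezing into an indicator that isolates the event $\{H_t(n) < m\}$. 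Both sides then converge to the claimed identity, with $\mathcal{C}_0$ a small circle around $0$ avoiding $-a-b$.

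The main obstacle is to make these limit transitions rigorous \emph{at the level of Fredholm determinants}, not merely for the kernels. One needs locally uniform convergence of the kernels together with uniform bounds---a Hadamard-type estimate for each term of the series plus an integrable dominating function---so that the limit may be passed through the Fredholm expansion; and one must check at each stage that the required contour deformations cross no poles, in particular that the $s$-contour can be placed on $\mathrm{Re}(s) = 1/2$ and that $\mathcal{C}_0$ is admissible. The subtler point is the zero-temperature step: in the positive-temperature beta RWRE the Fredholm determinant already computes a genuine (quenched tail) probability, so the work there concentrates on showing that the point-to-point partition function of the beta polymer, suitably rescaled, converges to $\exp(-\lambda^{-1} T^{\mathrm{PP}}(n,m))$ and that the $q$-Laplace transform of this limit is exactly the distribution function of interest---a standard but delicate freezing argument. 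An alternative, self-contained route would bypass $q$-Hahn TASEP entirely, establishing a nested contour integral formula for the moments of a dual functional directly on the Bernoulli-exponential model (via a Markov duality, or a coordinate Bethe ansatz diagonalization of the associated half-line particle system) and then summing the moments into a Fredholm determinant as in \cite{borodin2014macdonald}; there the bottleneck is the combinatorial identity underpinning Bethe ansatz completeness.
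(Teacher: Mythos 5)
Your strategy is exactly the route the paper relies on: Theorem~\ref{exact} is not proved in this paper but imported from \cite[Theorem 1.18]{RWRE}, which establishes it precisely as you describe --- via the Bethe-ansatz solvability of $q$-Hahn TASEP and the Macdonald-process/duality machinery of \cite{borodin2014macdonald,borodin2014duality}, followed by a zero-temperature limit from the beta RWRE to Bernoulli-exponential FPP. The only detail your outline omits is the sign correction recorded in the remark following the theorem: the cited statement reads $\det(I+\mathsf{K}_n)$ due to a sign mistake, and the form used here is $\det(I-\mathsf{K}_n)$.
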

\begin{remark}
	Note that \cite[Theorem 1.18]{RWRE} actually states $\P(H_t(n)<m) = \det(I+\mathsf{K}_{n})_{\L^2(\mathcal{C}_0)}$, instead of $\det(I-\mathsf{K}_{t,n})_{\L^2(\mathcal{C}_0)}$ due to a sign mistake.
\end{remark}
This result was proved in \cite{RWRE} by taking a zero-temperature limit of a similar formula for the Beta RWRE obtained using the Bethe ansatz solvability of $q$-Hahn TASEP and techniques from \cite{borodin2014macdonald,borodin2014duality}.  The integral \eqref{eq:defK} above is oscillatory and does not converge absolutely, but we may deform the contour so that it does. We will justify this deformation in Section $2.2$. 

Theorem \ref{main theorem} is proven in Section 2 by applying steep descent analysis to $\det(1-\mathsf{K}_n)$, however the proofs of several key lemmas are deferred to later sections. The main challenge in proving Theorem \ref{main theorem} comes from the fact that, after a necessary change of variables $\o=n^{-1/3}u$, the contours of the Fredholm determinant are being pinched between poles of the kernel $\mathsf{K}_n$ at $\o=0$ and $\o=\frac{-a-b}{n^{1/3}}$ as $n \to \infty$. In order to show that the integral over the contour near $0$ does not affect the asymptotics, we prove bounds for $\mathsf{K}_n$ near $0$, and carefully choose a family of contours $\mc{C}_n$ on which we can control the kernel. This quite technical step is the main goal of Section 3. Section 4 is devoted to bounding the Fredholm determinant expansion of $\det(1-\mathsf{K}_n)_{L^2(\mathcal{C}_n)},$ in order to justify the use of dominated convergence in Section 2. 

\subsection{Other interpretations of the model} \label{interpretations}

There are several equivalent interpretations of Bernoulli-exponential first passage percolation. We will present the most interesting here. 

\subsubsection{A particle system on the integer line}
The height function of the percolation cluster $H_t(n)$ is equivalent to the height function of an interacting particle system we call geometric jump pushTASEP, which generalizes pushTASEP (the $R=0$ limit of PushASEP introduced in \cite{BorodinFerrari}) by allowing jumps of length greater than 1. This model is similar to Hall-Littlewood pushTASEP introduced in \cite{ghosalpush}, but has a slightly different particle interaction rule. 

\begin{definition}[Geometric jump pushTASEP] Let $\mathrm{Geom}(q)$ denote a geometric random variable with $\P(\mathrm{Geom}(q)=k)=q^k(1-q)$. Let $1 \leq  p_1(t) < p_2(t) < ...<p_i(t)<...$ be the positions of ordered particles in $\Z_{\geq 1}$. At time $t=0$ the position $n \in \Z_{\geq 0}$ is occupied with probability $b/(a+b)$. Each particle has an independent exponential clock with parameter $a$, and when the clock corresponding to the particle at position $p_i$ rings, we update each particle position $p_j$ in increasing order of $j$ with the following procedure. ($p_i(t-)$ denotes the position of particle $i$ infinitesimally before time $t$.)
	
	\begin{itemize}
		\item If $j<i$, then $p_j$ does not change. 
		\item $p_i$ jumps to the right so that the difference $p_i(t)-p_i(t-)$ is distributed as $1+\mathrm{Geom}(a/(a+b))$
		\item If $j>i$, then
		\begin{itemize}
			\item If the update for $p_{j-1}(t)$ causes $p_{j-1}(t) \geq p_{j}(t-)$, then $p_{j}(t)$ jumps right so that $p_{j}(t)-p_{j-1}(t)$ is distributed as $1+\mathrm{Geom}(a/(a+b))$.  
			\item Otherwise $p_j$ does not change.
			\item All the geometric random variables in the update procedure are independent.
		\end{itemize} 
	\end{itemize}
\end{definition}

\begin{figure}
\begin{center}
		\includegraphics[scale=.4]{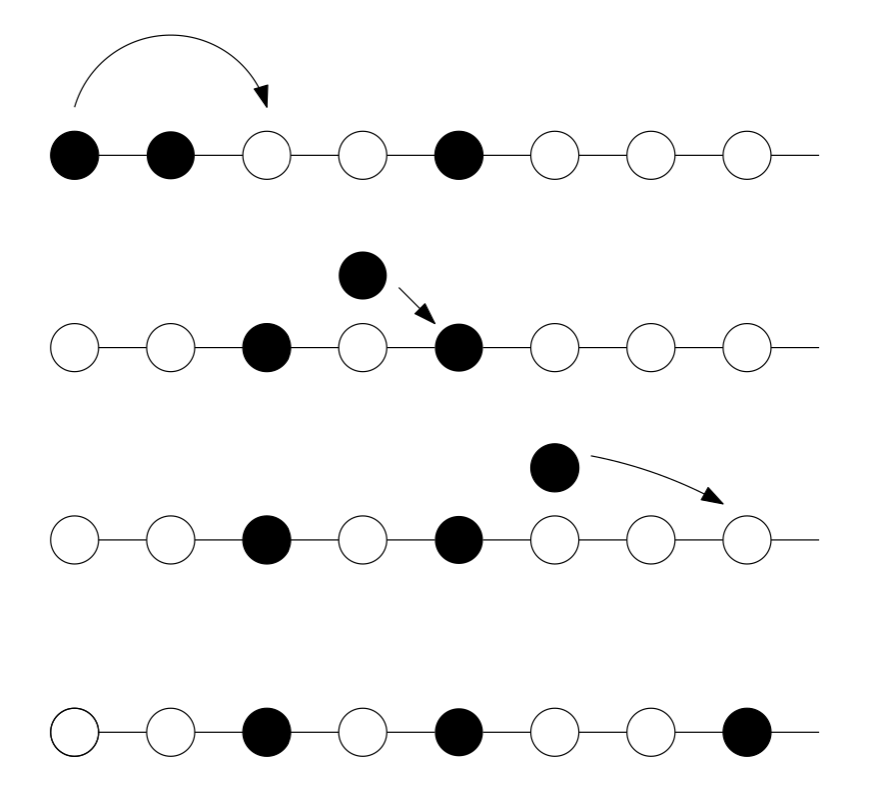}
\end{center}
	\caption{This figure illustrates a single update for geometric jump pushTASEP. The clock corresponding to the leftmost particle rings, activating the particle. The first particle jumps 2 steps pushing the next particle and activating it. This particle jumps 1 step pushing the rightmost particle and activating it. The rightmost particle jumps 3 steps, and all particles are now in their original order, so the update is complete.}
	\label{particlesystem}
\end{figure}

Another way to state the update rule is that each particle jumps with exponential rate a, and the jump distance is distributed as $1 + \mathrm{Geom}(a/(a+b))$. When a jumping particle passes another particle, the passed particle is pushed a distance $1+\mathrm{Geom}(a/(a+b))$ past the jumping particle's ending location (see Figure \ref{particlesystem}).

The height function $\overline{H}_t(n)$ at position $n$ and time $t$ is the number of unoccupied sites weakly to the left of $n$. If we begin with the distribution of $(n,H_t(n))$ in our percolation model, and rotate the first quadrant clockwise $45$ degrees, the resulting distribution is that of $(n,\overline{H}_t(n))$. The horizontal segments in the upper border of the percolation cluster correspond to the particle positions, thus
$$H_t(n)=p_t(n)-n=\sup\{k: \overline{H}_t(n+k) \geq k\}.$$
A direct translation of Theorem \ref{main theorem} gives:
\begin{corollary} \label{cor:particle} Fix parameters $a,b>0$. For any $t>0$ and $x \in \mathbb{R}$,
	$$\lim_{n \to \infty} \mathbb{P}\left(\frac{p_t(n)-\left(\frac{a+b}{b}\right) n-d n^{2/3}}{\sigma n^{4/9}} \leq x\right)=F_{\mathrm{GUE}}(x),$$
	where $F_{\mathrm{GUE}}(x)$ is the Tracy-Widom GUE distribution.
\end{corollary}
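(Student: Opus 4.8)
The plan is to derive Corollary \ref{cor:particle} directly from Theorem \ref{main theorem} by making the dictionary between the percolation picture and the particle system completely precise, so that the statement is a literal change of variables. First I would verify the identity $p_t(n) = H_t(n) + n$ claimed in the text. The key is the geometric description: rotating the first quadrant clockwise by $45$ degrees sends the up-right lattice path $(k,H_t(k))_{k\geq 0}$ tracing the upper border of $C(t)$ to the interface of a particle configuration, where horizontal unit segments of the border become unoccupied sites and vertical unit segments become occupied sites (particles). One should check that the exponential-rate-$a$ bank-erosion dynamics of the river, which instantaneously appends a whole tributary path, translates exactly into the pushing mechanism of geometric jump pushTASEP, with the jump length $1+\mathrm{Geom}(a/(a+b))$ arising because a freshly eroded rightward bank launches a new up-right walk that goes east with probability $b/(a+b)$ at each step until it rejoins the cluster, producing a Geometric$(a/(a+b))$ number of extra eastward steps before the forced northward step. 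This is the combinatorial heart of the reduction; once it is in place, the relation $H_t(n) = p_t(n) - n$ holds as an almost-sure identity of processes.

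Given that identity, the rest is immediate. Writing $H_t(n) = p_t(n) - n$ and substituting into Theorem \ref{main theorem}, we get
$$\P\left(\frac{p_t(n) - n - \frac{a}{b}n - dn^{2/3}}{\sigma n^{4/9}} \leq x\right) \to F_{\mathrm{GUE}}(x),$$
and since $n + \frac{a}{b}n = \frac{a+b}{b}n$, the centering constant becomes $\frac{a+b}{b}n$, which is exactly the statement of Corollary \ref{cor:particle}. The scaling constants $d$ and $\sigma$ from \eqref{constants} are untouched by the shift by $n$, so no recomputation is needed.

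The main obstacle is the coupling between the two dynamics: one must argue that the distribution of the random pair $(n, H_t(n))$ under Bernoulli-exponential FPP coincides, after the $45$-degree rotation, with the distribution of $(n, \overline H_t(n))$ under geometric jump pushTASEP, \emph{including} the time parametrization. The subtle points are (i) checking that the $t=0$ initial condition matches — the time-$0$ river being a directed walk that steps north with probability $a/(a+b)$ corresponds precisely to each site being occupied independently with probability $b/(a+b)$ after rotation; (ii) matching the generators, i.e. that an exponential-$a$ clock at an occupied site (a vertical segment of the border, equivalently a vertex the river leaves upward) produces the correct pushed cascade; and (iii) confirming that the "join an existing path" rule for tributaries is what makes the cascade terminate after finitely many pushes, matching the recursive stopping rule in the definition of the particle system. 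None of these steps is deep, but they require care to state cleanly. Everything else — the passage to the supremum formulation $H_t(n) = \sup\{k : \overline H_t(n+k)\geq k\}$ and the arithmetic of the centering — is routine.
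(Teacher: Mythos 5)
Your overall route is exactly the paper's: the corollary is a direct translation of Theorem \ref{main theorem} through the identity $p_t(n)=H_t(n)+n$, and your final arithmetic (the shift $n+\tfrac{a}{b}n=\tfrac{a+b}{b}n$, with $d$ and $\sigma$ unchanged) is correct. However, the dictionary you propose to verify --- which you rightly call the combinatorial heart of the reduction --- is stated backwards, and as written the verification would fail. In the paper's correspondence it is the \emph{horizontal} segments of the upper border that become particles and the vertical segments that become holes: the time-$0$ river steps east with probability $b/(a+b)$, which is exactly the occupation probability $b/(a+b)$ in the particle system, and the $n$-th horizontal step of the border path sits at step number $n+H_t(n)$, which is precisely the identity $p_t(n)=n+H_t(n)$. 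Under your assignment (vertical segments $=$ particles) the initial conditions do not match (you would get occupation probability $a/(a+b)$, and indeed your own point (i) is internally inconsistent on this), and the identity you need would be replaced by $p_t(n)=n+\inf\{k:H_t(k)\geq n\}$, which is not what the corollary's centering uses.

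The jump-length explanation has the same swap: the exponential clock of rate $a$ in the particle system corresponds to the rate-$a$ erosion of an \emph{upward} bank, and the jump $1+\mathrm{Geom}(a/(a+b))$ of a particle (equivalently, the increase of $H_t(n)$ at a column) is the length of the run of consecutive \emph{north} steps made by the new tributary --- one forced north step followed by a $\mathrm{Geom}(a/(a+b))$ number of further north steps, since each step is north with probability $a/(a+b)$. Your version (rightward erosion producing a $\mathrm{Geom}(a/(a+b))$ number of extra eastward steps) both attaches the wrong rate and the wrong geometric parameter: a run of east steps taken with probability $b/(a+b)$ each would be $\mathrm{Geom}(b/(a+b))$. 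So the corollary and your substitution into Theorem \ref{main theorem} are fine, but the verification of the correspondence must be redone with the roles of horizontal/vertical (equivalently east/north, $b$/$a$) interchanged before the identity $p_t(n)=H_t(n)+n$ you rely on is actually justified.
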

To the authors knowledge Corollary \ref{cor:particle} is the first result in interacting particle systems showing Tracy-Widom fluctuations for the position of a particle at finite time.

\subsubsection{Degenerations}

If we set $b=1, t'= t/a,$ and $a \to 0$, then in the new time variable $t'$ each particle performs a jump with rate  1 and with probability going to 1, each jump is distance 1, and each push is distance 1. This limit is pushTASEP on $\Z_{\geq 0}$ where every site is occupied by a particle at time $0$. Recall that in pushTASEP, the dynamics of a particle are only affected by the (finitely many) particles to its left, so this initial data makes sense.

We can also take a continuous space degeneration. Let $x$ be the spatial coordinate of geometric jump pushTASEP, and let $\exp(\lambda)$ denote an exponential random variable with rate $\lambda$. Choose a rate $\lambda>0$, and set $b=\frac{\lambda}{n}, x'=x/n, a=\frac{n-\lambda}{n}$, and let $n \to \infty$. Then our particles have jump rate $\frac{n-\lambda}{n} \to 1$, jump distance $\frac{\mathrm{Geom}(1-\lambda/n)}{n} \to \exp(\lambda)$, and push distance $\frac{\mathrm{Geom}(1-\lambda/n)}{n} \to \exp(\lambda)$. This is a continuous space version of pushTASEP on $\R_{\geq 0}$ with random initial conditions such that the distance between each particle position $p_i$ and its rightward neighbor $p_{i+1}$ is an independent exponential random variable of rate $\lambda$. Each particle has an exponential clock, and when the clock corresponding to the particle at position $p_i$ rings, an update occurs which is identical to the update for geometric jump pushTASEP except that each occurrence of the random variable $1 +\mathrm{Geom}(a/(a+b))$ is replaced by the random variable $\exp(\lambda).$

\subsubsection{A benchmark model for travel times in a square grid city}

The first passage times of Bernoulli-exponential FPP can also be interpreted as the minimum amount of time a walker must wait at streetlights while navigating a city \cite{stoplight}. Consider a city, whose streets form a grid, and whose stoplights have i.i.d exponential clocks. The first passage time of a point $(n,m)$ in our model has the same distribution as the minimum amount of time a walker in the city has to wait at stoplights while walking $n$ streets east and $m$ streets north. Indeed at each intersection the walker encounters one green stoplight with zero passage time and one red stoplight at which they must wait for an exponential time. Note that while the first passage time is equal to the waiting time at stoplights along the best path, the joint distribution of waiting times of walkers along several paths is different from the joint passage times along several paths in Bernoulli-exponential FPP.

\subsection{Further directions} \label{questions}
Bernoulli-exponential FPP has several features that merit further investigation. From the perspective of percolation theory, it would be interesting to study how long it takes for the percolation cluster to contain all vertices in a given region, or how geodesics from the origin coalesce as two points move together.

From the perspective of KPZ universality, it is natural to ask: what is the correlation length of the upper border of the percolation kernel, and what is the joint law of the topmost few paths.

Under diffusive scaling limit, the set of coalescing simple directed random walks originating from every point of $\Z^2$ converges to the Brownian web \cite{fontes2002brownian,fontes2004brownian}. Hence the set of all possible tributaries in our model converges to the Brownian web. One may define a more involved set of coalescing and branching random walks which converges to a continuous object called the Brownian net (\cite{newman2010marking}, \cite{sun2008brownian}, see also the review \cite{schertzer2015brownian}). Thus, it is plausible that there exist a continuous limit of Bernoulli-Exponential FPP where tributaries follow Brownian web paths and branch at a certain rate at special points of the Brownian web used in the construction of the Brownian net.

After seeing Tracy-Widom fluctuations for the edge statistics it is natural to ask whether the density of vertices inside the river along a cross section is also connected to random matrix eigenvalues and whether a statistic of this model converges to the positions of the second, third, etc. eigenvalues of the Airy point process.

\subsection{Notation and conventions}
We will use the following notation and conventions.
\begin{itemize}
	\item $B_{\epsilon}(x)$ will denote the open ball of radius $\epsilon>0$ around the point $x$. 
	\item $\Re[x]$ will denote the real part of a complex number $x$, and $\Im[x]$ denotes the imaginary part. 
	\item $\mathcal{C}$ and $\mathcal{\gamma}$ with any upper or lower indices will always denote an integration contour in the complex plane. $\mathsf{K}$ with any upper or lower indices will always represent an integral kernel. A lower index like $\mathcal{\gamma}_r$, $\mathcal{C}_n$, or $\mathsf{K}_n$ will usually index a family of contours or kernels. An upper index such as $\mathcal{\gamma}^{\epsilon}$, $\mathcal{C}^{\epsilon}$, or $\mathsf{K}^{\epsilon}$ will indicate that we are intersecting our contour with a ball of radius $\epsilon$, or that the integral defining the kernel is being restricted to a ball of radius $\epsilon$. 
	
\end{itemize}

\subsection*{Acknowledgements}
The authors thank Ivan Corwin for many helpful discussions and for useful comments on an earlier draft of the paper. The authors thank an anonymous reviewer for detailed and helpful comments on the manuscript. G. B. was partially supported
by the NSF grant DMS:1664650. M. R. was partially supported by the Fernholz Foundation's ``Summer Minerva Fellow'' program, and also received summer support from Ivan Corwin's NSF grant DMS:1811143.

\section{Asymptotics}

\subsection{Setup}
The steep descent method is a method for finding the asymptotics of an integral of the form
$$I_M=\int_{\mathcal{C}} e^{Mf(z)}dz,$$
as $M \to \infty$, where $f$ is a holomorphic function and $\mathcal{C}$ is an integration contour in the complex plane. The technique is to find a critical point $z_0$ of $f$, deform the contour $\mathcal{C}$ so that it passes through $z_0$ and $\Re[f(z)]$ decays quickly as $z$ moves along the contour $\mathcal{C}$ away from $z_0$. In this situation $e^{Mf(z_0)}/e^{Mf(z)}$ has exponential decay in $M$. We use this along with specific information about our $f$ and $\mathcal{C}$, to argue that the integral can be localized at $z_0$, i.e. the asymptotics of $\int_{\mathcal{C} \cap B_{\epsilon}(z_0)} e^{M f(z)} dz$ are the same as those of $I_M$. Then we Taylor expand $f$ near $z_0$ and show that sufficiently high order terms do not contribute to the asymptotics. This converts the first term of the asymptotics of $I_M$ into a simpler integral that we can often evaluate.

In Section $2.1$ we will manipulate our formula for $\P(h(n) < m)$, and find a function $f_1$ so that the kernel $\msf{K}_{n}$ can be approximated by an integral of the form $\int_{\lambda+\i \mathbb{R}} e^{n^{1/3}[f_1(z)-f_1(\o)]} dz$. Approximating $\msf{K}_{n}$ in this way will allow us to apply the steep descent method to both the integral defining $\msf{K}_n$ and the integrals over $\mathcal{C}_0$ in the Fredholm determinant expansion. 

\medskip
For the remainder of the paper we fix a time $t>0$, and parameters $a,b>0$. All constants arising in the analysis below depend on those parameters $t,a,b$, though we will not recall this dependency explicitly for simplicity of notation.  
\medskip

We also fix henceforth  
\begin{equation} m=\left\lfloor \frac{a}{b} n +d n^{2/3}+n^{4/9} \sigma x \right\rfloor. \label{m}\end{equation}
We consider $\msf{K}_n$ and change variables setting $\tilde{z}=s+u$, $d\tilde{z}=ds$ to obtain
$$\tilde{\mathsf{K}}_n(u,u')=\frac{1}{2 \pi \i} \int_{1/2+u-\i \infty}^{1/2+u+\i \infty} \frac{e^{t(\tilde{z}-u)}}{(\tilde{z}-u)(\t{z}-u')} \frac{g(u)}{g(\t{z})} d\t{z}.$$
In the following lemma, we change our contour of integration in the $\t{z}$ variable so that it does not depend on $u$.
\begin{lemma}
	For every fixed $n$,
	$$\t{\mathsf{K}}_n(u,u')=\frac{1}{2 \pi \i} \int_{n^{1/3} \lambda+\i \mathbb{R}} \frac{e^{t(\t{z}-u)}}{(\t{z}-u)(\t{z}-u')} \frac{g(u)}{g(\t{z})} d\t{z}.$$ 
\end{lemma}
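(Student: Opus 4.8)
The plan is to justify moving the $\tilde z$-contour from the vertical line $1/2+u+\i\mathbb{R}$ to the vertical line $n^{1/3}\lambda + \i\mathbb{R}$. These two lines are parallel, so the deformation sweeps across the vertical strip $S$ between $\Re(\tilde z)=\min(1/2+\Re u,\, n^{1/3}\lambda)$ and $\Re(\tilde z)=\max(1/2+\Re u,\, n^{1/3}\lambda)$. First I would fix notation: recall that $u,u'\in\mathcal C_0$, a small circle around $0$, so $\Re u$ is close to $0$ and in particular $1/2+\Re u>0$; also $n^{1/3}\lambda>0$. The integrand is
$$
\Phi(\tilde z):=\frac{e^{t(\tilde z-u)}}{(\tilde z-u)(\tilde z-u')}\,\frac{g(u)}{g(\tilde z)},
$$
and I want to check that (i) $\Phi$ is holomorphic on the closed strip $S$, and (ii) $\Phi$ decays fast enough along horizontal segments $\{\Re\tilde z \in S,\ \Im\tilde z = \pm R\}$ as $R\to\infty$ that the contribution of the two horizontal pieces of the rectangular contour vanishes, so that Cauchy's theorem gives equality of the two vertical integrals.

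For holomorphy: the only singularities of $\Phi$ are the simple poles at $\tilde z=u$, at $\tilde z=u'$, and the zeros of $g(\tilde z)$, namely $\tilde z=0$ and $\tilde z=-a-b$ (recall $g(w)=\left(\tfrac{a+w}{w}\right)^n\left(\tfrac{a+w}{a+b+w}\right)^m\tfrac1w$, so $1/g(\tilde z)=\left(\tfrac{\tilde z}{a+\tilde z}\right)^n\left(\tfrac{a+b+\tilde z}{a+\tilde z}\right)^m\tilde z$, which is entire away from the zeros of $a+\tilde z$; but $\tilde z=-a$ is a removable point since $m\ge n$ here, so in fact $1/g$ is entire). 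Thus $\Phi$ has poles only at $\tilde z=u$ and $\tilde z=u'$. Since $\mathcal C_0$ is a \emph{small} circle around $0$, by taking its radius small enough we can ensure $|\Re u|,|\Re u'|<1/4$; then $\Re u,\Re u'$ lie strictly to the left of the whole strip $S$ (whose left edge is $\ge 1/4$), so $u,u'\notin S$ and $\Phi$ is holomorphic on a neighborhood of $\overline S$. Hence the deformation crosses no poles and no residues are picked up.

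For the decay on horizontal segments: write $\tilde z=\xi+\i R$ with $\xi$ ranging over the (bounded, $n$-dependent) interval defining $S$. The prefactor $e^{t\tilde z}/[(\tilde z-u)(\tilde z-u')]$ is $O(e^{t\xi}/R^2)$ uniformly in $\xi\in S$, which already tends to $0$; the remaining factor is $|1/g(\tilde z)|=\left|\tfrac{\tilde z}{a+\tilde z}\right|^n\left|\tfrac{a+b+\tilde z}{a+\tilde z}\right|^m|\tilde z|$, and as $R\to\infty$ with $\xi$ bounded each of $\left|\tfrac{\tilde z}{a+\tilde z}\right|$ and $\left|\tfrac{a+b+\tilde z}{a+\tilde z}\right|$ converges to $1$ while $|\tilde z|=O(R)$, so $|1/g(\tilde z)|=O(R)\cdot(1+o(1))^{n+m}=O(R)$ for fixed $n$ (here $n,m$ are fixed while $R\to\infty$, so the $(1+o(1))^{n+m}$ is harmless). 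Altogether the integrand on the horizontal segment is $O(R^{-1})$ and the segment has bounded length, so its contribution is $O(R^{-1})\to0$. Applying Cauchy's theorem to the rectangle with corners $1/2+u\pm\i R$ and $n^{1/3}\lambda\pm\i R$ and letting $R\to\infty$ yields the claimed identity. The only genuinely delicate point is bookkeeping the requirement that $\mathcal C_0$ be chosen small enough that $u,u'$ stay out of the strip $S$ uniformly; since Theorem \ref{exact} already allows us to take $\mathcal C_0$ to be any sufficiently small circle around $0$, this costs nothing, and it is the main (minor) obstacle to watch.
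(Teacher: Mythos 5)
Your argument is essentially the paper's own proof: deform the vertical line by Cauchy's theorem on a rectangle, observe that no singularities are crossed once $\mathcal{C}_0$ is small, and kill the horizontal segments using the $O(1/R)$ decay of $1/[(\tilde z-u)(\tilde z-u')g(\tilde z)]$. Two corrections to your justification of holomorphy, though. First, $1/g$ is \emph{not} entire: $1/g(\tilde z)=\tilde z^{\,n+1}(a+b+\tilde z)^m/(a+\tilde z)^{n+m}$ has a pole of order $n+m$ at $\tilde z=-a$; the points $0$ and $-a-b$ you list are poles of $g$ (hence zeros of $1/g$), and there is no cancellation at $-a$ coming from $m\ge n$ --- the two factors $\bigl(\tfrac{\tilde z}{a+\tilde z}\bigr)^n$ and $\bigl(\tfrac{a+b+\tilde z}{a+\tilde z}\bigr)^m$ both blow up there. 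This slip is harmless only because $-a<0$ lies strictly to the left of the strip you sweep, which has positive real part, so the conclusion that no singularity is crossed still holds. Second, your condition $|\Re u|<1/4$ does not by itself keep $u,u'$ out of the strip: its left edge is $\min(1/2+\Re u,\,n^{1/3}\lambda)$, and $\lambda$ can be smaller than $1/4$ (e.g.\ for large $t$) while $n$ can be $1$, so the left edge may sit below $1/4$. One should take the radius of $\mathcal{C}_0$ smaller than $\min(1/4,\lambda)$, exactly as the paper does; with that choice (which you implicitly allow in your closing remark) your argument goes through.
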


\begin{proof}
	Choose the contour $\mathcal{C}_0$ to have radius $0<r<\min[1/4,\lambda]$. This choice of $r$ means that we do not cross $\mc{C}_0$ when deforming the contour $1/2+u+\i \mathbb{R}$ to $\lambda+\i \mathbb{R}$. In this region $K$ is a holomorphic function, so this deformation does not change the integral provided that for $M$ real,
	
	$$\frac{1}{2 \pi \i}\int_{1/2+u+\i M}^{n^{1/3}\lambda+\i M} \frac{e^{t(\tilde{z}-u)}}{(\tilde{z}-u)(\tilde{z}-u')} \frac{g(u)}{g(\tilde{z})} d\tilde{z} \xrightarrow[M \to \pm \infty]{} 0.
	$$
	This integral converges to $0$ because for all $\tilde{z} \in [n^{1/3} \lambda-\i M,1/2+u-\i M] \cup [n^{1/3} \lambda+\i M,1/2+u+\i M]$ we have 
	$$\left|\frac{1}{(\tilde{z}-u)(\tilde{z}-u')g(\tilde{z})}\right| \sim \frac{1}{M},$$ 
	as $M \to \infty$.
	
\end{proof}

Set 
$$\tilde{h}_n(z)=-n \log \left( \frac{a+z}{z}\right )-m \log \left(\frac{a+z}{a+b+z} \right), \qquad \text{so that} \qquad e^{\tilde{h}_n(z)}=\frac{z}{g(z)}.$$
Then 
$$\mathsf{K}_n(u,u')=\frac{1}{2 \pi \i} \int_{n^{1/3} \lambda+\i \R} \frac{e^{t\t{z}+\tilde{h}_n(\t{z})}}{e^{tu+\t{h}_n(u)}} \frac{\t{z}}{u} \frac{d\t{z}}{(\t{z}-u)(\t{z}-u')}.$$
Now perform the change of variables 
$$z=n^{-1/3}\t{z}, \o=n^{-1/3} u, \o'=n^{-1/3} u'.$$
If we view our change of variables as occuring in the Fredholm determinant expansion, then due to the $d\o_i$s, we see that scaling all variables by the same constant does not change the Fredholm determinant $\det(1-\K_n)_{L^2(\mathcal{C})}$. Thus our change of variables gives
$$\mathsf{K}_n(\o,\o')=\frac{1}{2 \pi \i} \int_{\lambda+\i \R}  \frac{e^{n^{1/3} t(z-\o)}}{(z-\o)(z-\o')} e^{h_n(z)-h_n(\o)} \frac{z}{\o} dz$$
where 
$$h_n(z)=\t{h}_n(n^{1/3}z)=-n \log \left(\frac{a+n^{1/3}z}{n^{1/3} z} \right) -m \log \left( \frac{a+n^{1/3}z}{a+b+n^{1/3}z} \right).$$

\begin{remark} The contour for $\o$, $\o'$ becomes $n^{-1/3} \mc{C}_0$ after the change of variables, but $\K_n(\o,\o')$ is holomorphic in most of the complex plane. Examining of the poles of the integrand for $\K_n(\o,\o')$, we see that we can deform the contour for $\o,\o'$ in any way that does not cross the line $\lambda+ \i \R$, the pole at $-(a+b)/n^{1/3}$, or the pole at $0$, without changing the Fredholm determinant $\det(I-\K_n)_{L^2(n^{-1/3}\mathcal{C}_0)}$.
	
\end{remark}
Taylor expanding the logarithm in the variable $n$ gives
$$h_n(z)=-n^{1/3} \left(\frac{a(a+b)}{2z^2}-\frac{bd}{z}\right)-n^{1/9} \left( \frac{-b \sigma x}{z} \right)+r_n(z).$$
Here $r_n(z) = \mathcal O(1)$ in a sense that we make precise in Lemma \ref{chaos control}. The kernel can be rewritten as 
\begin{multline*}
\mathsf{K}_n(\o,\o')= \\\frac{1}{ 2 \pi \i} \int_{\lambda+\i \R} \frac{\exp(n^{1/3} (f_1(z)-f_1(w))+n^{1/9}(f_2(z)-f_2(\o))+(r_n(z)-r_n(\o)))}{(z-\o)(z-\o')} \frac{z}{\o} dz
\end{multline*}
where
\begin{equation} f_1(z)=tz-\frac{a(a+b)}{2z^2}+\frac{bd}{z}, \qquad
f_2(z)=\frac{b \sigma x}{z}. \label{f def}
\end{equation}
We have approximated the kernel as an integral of the form  $\int e^{n^{1/3} [f_1(z)-f_1(\o)]}dz$. To apply the steep-descent method, we want to understand the critical points of the function $f_1$. We have
\begin{equation}
f_1'(z)=t+\frac{a(a+b)}{z^3}-\frac{db}{z^2},\qquad
f_1''(z)=-\frac{3a(a+b)}{z^4}+\frac{2bd}{z^3}, \qquad
f_1'''(z)=\frac{12a(a+b)}{z^5}-\frac{6bd}{z^4}.
\end{equation}
Where $a,b$ are the parameters associated to the model. Let the constant $\lambda$ be as defined in (\ref{constants}), then $0=f_1'(\lambda)=f_1''(\lambda)=0$, and 

$$f_1'''(\lambda)=\frac{3a(a+b)}{\lambda^5}=2\left(\frac{ b \sigma}{\lambda^2}\right)^3=2\left(\frac{-f_2'(\lambda)}{x}\right)^3 ,$$
is a positive real number. $\sigma$ is defined in equation (\ref{constants}).

Recall the definition of the Tracy-Widom GUE distribution, which governs the largest eigenvalue of a gaussian hermitian random matrix. 
\begin{definition} \label{tracywidomdist} The Tracy-Widom distribution's distribution function is defined as $F_{\textrm{GUE}}(x)=\det(1-\mathsf{K}_{\mathrm{Ai}})_{L^2(x,\infty)}$, where $K_{Ai}$ is the Airy kernel,
	\begin{align}\mathsf{K}_{\mathrm{Ai}}(s,s')=\frac{1}{2\pi \i} \int_{e^{-2 \pi i/3} \infty}^{e^{2 \pi \i /3} \infty} d\o \frac{1}{2\pi \i} \int_{e^{-\pi \i/3} \infty}^{e^{\pi \i/3} \infty}dz \frac{e^{z^3/3-zs}}{e^{\o^3/3-\o s'}} \frac{1}{(z-\o)}.\nonumber \end{align}
\end{definition}
In the above integral the two contours do not intersect. We can think of the inner integral following the contour $(e^{-\pi \i /3} \infty, 1] \cup (1, e^{\pi \i/3} \infty)$, and the outer integral following the contour $(e^{-2\pi \i/3} \infty, 0] \cup (0, e^{2\pi \i/3} \infty)$. Our goal through the rest of the paper is to show that the Fredholm determinant $\det(I-\msf{K}_n)$ converges to the Tracy-Widom distribution as $n \to \infty$.
\subsection{Steep descent contours} 

\begin{definition}
	We say that a path $\gamma: [a,b] \to \mathbb{C}$ is steep descent with respect to the function $f$ at the point $x= \gamma(0)$ if $\frac{d}{dt} \Re[f(\gamma(t))]>0$ when $t>0$, and $\frac{d}{dt} \Re[f(\gamma(t))]<0$ when $t<0$. 
\end{definition}

We say that a contour $\mathcal{C}$ is steep descent with respect to a function $f$ at a point $x$, if the contour can be parametrized as a path satisfy the above definition. Intuitively this statement means that as we move along the contour $\mathcal{C}$ away from the point $x$, the function $f$ is strictly decreasing. 

In this section we will find a family of contours $\mc{\gamma}_r$ for the variable $z$ and so that $\mc{\gamma}_r$ is steep descent with respect to $\Re[f_1(z)]$ at the point $\lambda,$ and study the behavior of $\Re[f_1]$. The contours $\mathcal{C}_n$ for $\o$ are constructed in Section \ref{Cn}.

\begin{lemma}
	\label{steep descent}
	
	The contour $\lambda+\i \mathbb{R}$ is steep descent with respect to the function $\Re[f_1]$ at the point $\lambda$. 
	
\end{lemma}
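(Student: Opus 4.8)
The plan is a direct computation. I would parametrize the contour by $\gamma(\tau)=\lambda+\i\tau$, $\tau\in\mathbb{R}$, so that $\gamma(0)=\lambda$; since the only singularity of $f_1$ is at $z=0$ and $\lambda>0$, the function $\Re[f_1(\gamma(\tau))]$ is smooth in $\tau$, and the task is to show it strictly decreases as $\tau$ moves away from $0$, which is the steep descent property at $\lambda$.

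First I would separate the three terms of $f_1(z)=tz-\tfrac{a(a+b)}{2z^2}+\tfrac{bd}{z}$. The linear term has constant real part $t\lambda$ on the vertical line, so it does not contribute to $\tfrac{d}{d\tau}\Re[f_1(\gamma(\tau))]$; for the remaining two terms I would use $\Re[(\lambda+\i\tau)^{-1}]=\lambda/(\lambda^2+\tau^2)$ and $\Re[(\lambda+\i\tau)^{-2}]=(\lambda^2-\tau^2)/(\lambda^2+\tau^2)^2$. Setting $v=\lambda^2+\tau^2$ and writing $\lambda^2-\tau^2=2\lambda^2-v$, this gives
$$\Re[f_1(\lambda+\i\tau)]=t\lambda-\frac{a(a+b)\lambda^2}{v^2}+\frac{1}{v}\left(\frac{a(a+b)}{2}+bd\lambda\right).$$

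The key point is then to insert the explicit value $d=\tfrac{3a(a+b)}{2b\lambda}$ from \eqref{constants} — equivalently, to exploit the relations $f_1'(\lambda)=f_1''(\lambda)=0$ — which collapses the last bracket to $\tfrac{a(a+b)}{2}+bd\lambda=2a(a+b)$, so that
$$\Re[f_1(\lambda+\i\tau)]=t\lambda+a(a+b)\left(\frac{2}{v}-\frac{\lambda^2}{v^2}\right),\qquad v=\lambda^2+\tau^2\ge\lambda^2.$$
Differentiating in $v$ gives $\tfrac{d}{dv}\Re[f_1]=\tfrac{2a(a+b)}{v^3}(\lambda^2-v)$, which is strictly negative for every $v>\lambda^2$, i.e. for every $\tau\ne0$; since $v$ increases strictly with $|\tau|$, the chain rule yields $\tfrac{d}{d\tau}\Re[f_1(\gamma(\tau))]=\tfrac{4a(a+b)\tau}{v^3}(\lambda^2-v)$, which has sign opposite to $\tau$. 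Thus $\Re[f_1]$ strictly decreases along $\gamma$ away from $\lambda$, as claimed.

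I do not expect any real obstacle: the whole statement reduces to the elementary monotonicity of $v\mapsto 2/v-\lambda^2/v^2$ on $[\lambda^2,\infty)$. The only step demanding a little care is the algebraic bookkeeping that reduces the coefficient of $1/v$ to $2a(a+b)$ using the value of $d$; after that the monotonicity is manifest. As a consistency check, the derivative vanishes at $\tau=0$ and the leading deviation is $\Re[f_1(\lambda+\i\tau)]-\Re[f_1(\lambda)]=\tfrac{1}{24}f_1^{(4)}(\lambda)\tau^4$ with $f_1^{(4)}(\lambda)=-24a(a+b)/\lambda^6<0$, which is consistent with $\lambda$ being a degenerate critical point of $f_1$ and with the cubic Taylor term $\tfrac{1}{6}f_1'''(\lambda)(\i\tau)^3$ being purely imaginary on this contour.
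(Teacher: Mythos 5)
Your proof is correct and follows essentially the same route as the paper: a direct computation on the vertical line $\lambda+\i\mathbb{R}$ that exploits the relation $2bd\lambda=3a(a+b)$, your chain-rule derivative $\frac{-4a(a+b)\tau^{3}}{(\lambda^{2}+\tau^{2})^{3}}$ being exactly the expression the paper obtains by computing $\frac{d}{dy}\Re[f_1(\lambda+\i y)]=-\Im[f_1'(\lambda+\i y)]$. Writing $\Re[f_1]$ explicitly in the variable $v=\lambda^{2}+\tau^{2}$ and checking monotonicity is only a cosmetic variation, and your conclusion (strict decrease away from $\lambda$) matches the paper's intended notion of steep descent.
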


\begin{proof}
	
	We have that
	
	$$\frac{d}{dy} \Re[f_1(\lambda+\i y)]=-\Im[f_1'(\lambda+\i y)]=-\Im\left[t+\frac{a(a+b)}{(\lambda+\i y)^3}-\frac{bd}{\lambda+\i y}\right].$$
	Now using the relation $2 b d \lambda= 3a(a+b)$ and computing gives
	
	$$\frac{d}{dy} \Re[f_1(\lambda+\i y)]=\frac{-4a(a+b)y^3}{(\lambda^2+y^2)^3}.$$
	This derivative is negative when $y>0$ and positive when $y<0$.
	
\end{proof}

\begin{figure}[h]
\begin{center}
		\includegraphics[scale=0.4]{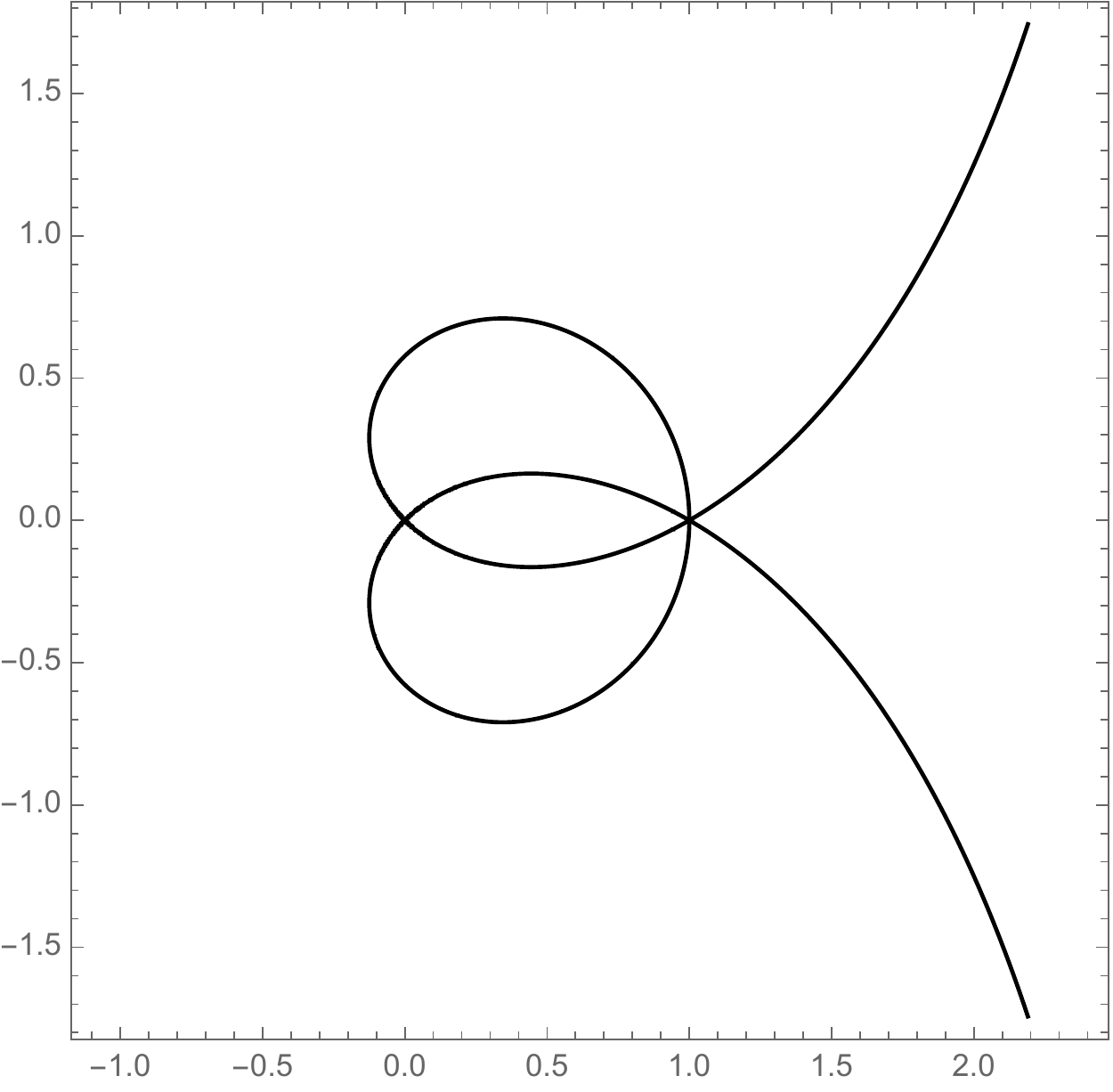} 
\end{center}
	\caption{The level lines of the function $\Re[f_1(z)]$ at value $\Re[f_1(\lambda)]$. In this image we take $a=b=t=1$.}
	\label{contour}
\end{figure}
Now we describe the contour lines of $\Re[f_1(z)]$ seen in Figure \ref{contour}. $\Re[f_1]$ is the real part of a holomorphic function, so its level lines are constrained by its singularities, and because the singularities are not too complicated, we can describe its level lines. The contour lines of the real part of a holomorphic function intersect only at critical points and poles and the number of contour lines that intersect will be equal to the degree of the critical point or pole. We can see from the Taylor expansion of $f_1$ at $\lambda$, that there will be $3$ level lines intersecting at $\lambda$ with angles $\pi/6,\pi/2$, and $5\pi/6$. From the form of $f_1$, we see that there will be $2$ level lines intersecting at $0$ at angles $\pi/4$ and $3\pi/4$, and that a pair of contour lines will approach $\i \infty$ and $-\i \infty$ respectively with $\Re[z]$ approaching $f_1(\lambda)/t$. This shows that, up to a noncrossing continuous deformation of paths, the lines in Figure $\ref{contour}$ are the contour lines $\Re[f_1(z)]=f_1(\lambda)$. We can also see that on the right side of the figure, $tz$ will be the largest term of $\Re[f_1(z)]$, so our function will be positive. This determines the sign of $\Re[f_1(z)]$ in the other regions. 

Our contour $\lambda+ \i \mathbb{R}$ is already steep descent, but we will deform the tails, so that we can use dominated convergence in the next section.
\begin{definition} For any $r>0$, define the contour $\mathcal{\gamma}_r=(e^{-2 \pi \mathbf{i}/3} \infty, \lambda-r\mathbf{i}) \cup [\lambda-r\mathbf{i},\lambda+r\mathbf{i}] \cup (\lambda+r\mathbf{i},e^{2\pi \mathbf{i}/3}\infty)$ and $\mathcal{\gamma}_r^{\epsilon}=\mathcal{\gamma}_r \cap B_{\epsilon}(\lambda).$ These contours appear in Figure $\ref{contours}$.
	
\end{definition}

\begin{figure}
\begin{center}
	\begin{tikzpicture}[scale=0.9]
\draw[ultra thick] (0,-1)--(0,1);
\draw[dashed] (0,1)--(0,2);
\draw[dashed] (0,1)--(0,-2);
\draw[dashed] (0,-2) -- +(-120:2) node[below]{$e^{-2 \pi \mathbf{i}/3} \infty$};
\draw[dashed] (0,2)-- +(120:2) node[above]{$e^{2 \pi \mathbf{i}/3} \infty$};

\node[right] at (-2,0) {$0$};
\node[right] at (0,2) {$\lambda+\i r$};
\node[right] at (0,-2) {$\lambda-\i r$};
\node[right] at (0,0) {$\lambda$};
\node[right] at (0,1) {$\lambda+\i \epsilon$};
\node[right] at (0,-1) {$\lambda- \i \epsilon$};

\draw[fill] (0,0) circle [radius=.06];
\draw[fill] (0,2) circle [radius=.06];
\draw[fill] (0,-2) circle [radius=.06];
\draw[fill] (0,1) circle [radius=.06];
\draw[fill] (0,-1) circle [radius=.06];
\draw[fill] (-2,0) circle [radius=.06];

\end{tikzpicture}
\end{center}
\caption{The contour $\mathcal{\gamma}_r$ is the infinite piecewise linear curve formed by the union of the vertical segment and the two semi infinite rays, oriented from bottom to top. The bold portion of this contour near $\lambda$ is $\mathcal{\gamma}_r^{\epsilon}$.}
\label{contours}
\end{figure}
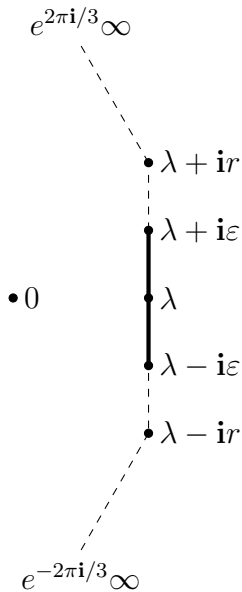

Because for any fixed $n$, we have $e^{h_n(z)} \to 1$ as $|z| \to \infty$, $\frac{z}{\o(z-\o)(z-\o')}$ has linear decay in $z$, and $e^{n^{1/3}t(z-\o)}$ has exponential decay in $z$, we can deform the vertical contour $\lambda+ \i \mathbb{R}$ to the contour $\mathcal{\gamma}_r$. Thus
$$\mathsf{K}_n(\o,\o')=\int_{\mathcal{\gamma}_r} \frac{e^{n^{1/3}t(z-\o)}}{(z-\o)(z-\o')} e^{h_n(z)-h_n(\o)} \frac{z}{\o} dz.$$

The function $\Re[f_1]$ is still steep descent on the contour $\mc{\gamma}_r$ with respect to the point $\lambda$. Lemma \ref{steep descent} shows that $\Re[f_1]$ is steep descent on the segment $[\lambda-r\mathbf{i},\lambda+r\mathbf{i}]$, and on $(e^{-2 \pi \mathbf{i}/3} \infty, \lambda-r\mathbf{i}) \cup (\lambda+r\mathbf{i},e^{2\pi \mathbf{i}/3} \infty)$ we inspect $f'_1(z)$ and note that for $z$ sufficiently large, the constant term $t$ dominates the other terms. Because our paths are moving in a direction with negative real component the contour $\mc{\gamma}_r$ is steep descent. 

Up to this point we have been concerned with contours being steep descent with respect to $\Re[f_1]$, but the true function in our kernel is $\exp(n^{1/3}t(z-\o)+h_n(z)-h_n(\o))$. To show that $\mc{\gamma}_r$ is steep descent with respect to this function, we will need to control the error term $n^{1/3}tz+h_n(z)-n^{1/3}f_1(z)=n^{1/9}f_2(z)+r_n(z)$. The following lemma gives bounds on this error term away from $z=0$. 

\begin{lemma}
	\label{chaos control}
	For any $N,\epsilon>0$ there is a constant $C$ depending only on $\epsilon, N$ such that
	\begin{equation}|f_2(\o)| \leq  C \text{  and  } |r_n(\o)| \leq C, \label{control} \end{equation}
	for all $n \geq N,$ and $\o \geq \frac{|a+b|+\epsilon}{N^{1/3}}$. \\
	Similarly for any $\delta>0$, there exists $N_{\delta}$  and $C'$ depending only on $\delta$, such that 
	\begin{equation} |f'_2(\o)| \leq C' \text{  and  } |r_n'(\o)| \leq C', \label{derivative control} \end{equation}
	for all $n \geq N_{\delta}$, and $\o$ satisfying $|\o| \geq \delta.$
\end{lemma}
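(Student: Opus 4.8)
\medskip
\noindent\textbf{Proof plan.} The bounds on $f_2$ and $f_2'$ are immediate: since $f_2(z)=b\sigma x/z$, we have $|f_2(\o)|=|b\sigma x|/|\o|$ and $|f_2'(\o)|=|b\sigma x|/|\o|^{2}$, and the lower bounds on $|\o|$ assumed in \eqref{control} and in the derivative estimate turn these into constants depending only on $N,\epsilon$, respectively only on $\delta$. The substance of the lemma is therefore the control of $r_n$ and $r_n'$, which comes down to making the Taylor expansion stated just before the lemma quantitative.

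The plan is to write $h_n(z)=-n\,L_1(n^{1/3}z)-m\,L_2(n^{1/3}z)$ with
$$L_1(w)=\log\Bigl(1+\tfrac aw\Bigr),\qquad L_2(w)=\log\Bigl(1+\tfrac aw\Bigr)-\log\Bigl(1+\tfrac{a+b}w\Bigr),$$
and, for $|w|>a+b$, to split each $L_i(w)=\alpha_i w^{-1}+\beta_i w^{-2}+E_i(w)$ into its part of degree at most $2$ (one computes $\alpha_1=a$, $\beta_1=-a^{2}/2$, $\alpha_2=-b$, $\beta_2=b(2a+b)/2$) plus a tail $E_i(w)=\sum_{k\ge 3}c_{i,k}w^{-k}$, where $|c_{i,k}|\le (a+b)^{k}$. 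Dominating the tail and its derivative by geometric series of ratio $(a+b)/|w|\le (a+b)/(a+b+\epsilon)<1$ yields $|E_i(w)|\le C|w|^{-3}$ and $|E_i'(w)|\le C|w|^{-4}$, with $C=C(a,b,\epsilon)$; and the hypotheses $n\ge N$, $|\o|\ge (|a+b|+\epsilon)N^{-1/3}$ force $|n^{1/3}\o|\ge a+b+\epsilon$, so these bounds are available at $w=n^{1/3}\o$.

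Next I would substitute $w=n^{1/3}z$ and $m=\tfrac ab n+dn^{2/3}+\sigma x\,n^{4/9}-\eta_n$, where $\eta_n\in[0,1)$ is the fractional part removed by the floor in \eqref{m}, and collect powers of $n$. The terms of order $n^{2/3}$ cancel, because $\alpha_2=-b$ and $m/n\to a/b$; the surviving contributions of orders $n^{1/3}$ and $n^{1/9}$ come only from the degree-$\le 2$ parts of the $L_i$ paired with the prefactor $n$ and with the explicit parts of $m$, and a direct computation identifies them as precisely the two terms $-n^{1/3}\bigl(\tfrac{a(a+b)}{2z^{2}}-\tfrac{bd}z\bigr)$ and $-n^{1/9}\bigl(\tfrac{-b\sigma x}{z}\bigr)$ of the claimed expansion. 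What is left is $r_n(z)$: it is a finite sum of terms of the shapes $\eta_n n^{-1/3}z^{-1}$, $\eta_n n^{-2/3}z^{-2}$, $z^{-2}$, $n^{-2/9}z^{-2}$, $n\,E_1(n^{1/3}z)$ and $m\,E_2(n^{1/3}z)$, with fixed numerical coefficients. Using $|m|\le C'n$, $\eta_n<1$, $|E_i(n^{1/3}z)|\le C(n^{1/3}|z|)^{-3}=Cn^{-1}|z|^{-3}$ and $|z|\ge (|a+b|+\epsilon)N^{-1/3}$, each of these is bounded by a constant depending only on $N,\epsilon$ (and on the fixed parameters $a,b,t,x$); the decisive mechanism is that in $n\,E_1(n^{1/3}z)$ and $m\,E_2(n^{1/3}z)$ the prefactor $n$ — respectively $m$, which is of order $n$ — is exactly absorbed by the factor $n^{-1}$ coming from $(n^{1/3}z)^{-3}$, leaving an $O(|z|^{-3})$ quantity.

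For the derivative bounds I would differentiate the expression for $r_n$ term by term — legitimate since the $E_i$-series converge uniformly on $\{|w|\ge a+b+\epsilon\}$ — check once more that the pieces of orders $n^{2/3},n^{1/3},n^{1/9}$ cancel against the derivative of the explicit polynomial part, and estimate what remains; the dominant terms are now $n^{4/3}E_1'(n^{1/3}z)$ and $m\,n^{1/3}E_2'(n^{1/3}z)$, which are $O(|z|^{-4})$ by the same absorption. Here one first picks $N_\delta$ large enough that $n\ge N_\delta$ and $|\o|\ge\delta$ force $|n^{1/3}\o|>a+b$ — for instance $N_\delta=\lceil((a+b+1)/\delta)^{3}\rceil$ — so that the series bounds apply, and then $|z|\ge\delta$ controls the remaining $O(|z|^{-2})$ and $O(|z|^{-3})$ terms. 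I expect the one genuinely delicate point to be the collection of powers of $n$ in the previous paragraph: one must verify that no contribution of order $n^{1/3}$ or $n^{1/9}$ is hidden in the tail $E_i$ or in the cross terms between the subleading parts of $m$ and the $w^{-2}$ part of the $L_i$. Once the expansion is organized as above this is routine bookkeeping, not a new idea.
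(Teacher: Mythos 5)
Your argument is correct, but it is genuinely different from the one in the paper. You prove the boundedness of $r_n$ and $r_n'$ by hand: expand $\log(1+a/w)$ and $\log(1+a/w)-\log(1+(a+b)/w)$ at $w=n^{1/3}z$ into power series valid for $|w|>a+b$, observe the cancellation of the $n^{2/3}$ terms, match the $n^{1/3}$ and $n^{1/9}$ terms with $f_1$ and $f_2$, and then bound the explicitly identified remainder (the $\eta_n n^{-1/3}z^{-1}$, $z^{-2}$, $n^{-2/9}z^{-2}$, $\eta_n n^{-2/3}z^{-2}$ pieces and the tails $nE_1$, $mE_2$) by geometric-series estimates, the key mechanism being that the prefactors $n$ and $m=O(n)$ are absorbed by $|n^{1/3}z|^{-3}=n^{-1}|z|^{-3}$; I checked the coefficient computations ($\alpha_1=a$, $\beta_1=-a^2/2$, $\alpha_2=-b$, $\beta_2=b(2a+b)/2$) and the resulting cancellations, and they are right, as is your choice of $N_\delta$ ensuring $|n^{1/3}\omega|>a+b$ in the derivative estimate. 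The paper instead argues softly: setting $y=1/\omega$ and treating $n^{-1/9}$ as a second variable, it notes that $r_n(\omega)$ extends to a function holomorphic in these two variables on a region including $n=\infty$ (the pole in $n$ disappears after subtracting the $n^{1/3}$ and $n^{1/9}$ terms), and then invokes compactness of the region $\{n\geq N,\ |\omega|\geq(|a+b|+\epsilon)N^{-1/3}\}$ in the variables $(y,n^{-1/9})$ to get boundedness of $r_n$, and boundedness of $\partial_y g=-\omega^2 r_n'(\omega)$ for the derivative estimate. The trade-off is the usual one: the paper's route is shorter but hinges on the unproved-in-detail assertion that the singularity at $n=\infty$ cancels, which is exactly the bookkeeping you carry out explicitly; your route is longer but elementary, quantitative (it gives $|r_n(z)|=O(|z|^{-2})+O(|z|^{-3})$ and $|r_n'(z)|=O(|z|^{-2})+O(|z|^{-4})$ with explicit constants), and makes the dependence on $N,\epsilon,\delta$ and on the fixed parameters $a,b,t,x$ transparent. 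Both proofs establish the lemma; yours could even be cited where the paper later needs uniformity in $\omega$ on the shrinking contours, since the decay rates in $|z|$ are explicit.
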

Lemma \ref{chaos control} is proved in Section \ref{Cn}.


\medskip

At this point we have a contour $\mc{\gamma}_r$ for the variable $z$, which is steep descent with respect to $\Re[f_1]$. We want to find a suitable contour for $\o$. The following lemma shows the existence of such a contour $\mathcal{C}_n$, where property $(c)$ below takes the place of being steep descent. This lemma is fairly technical and its proof is the main goal of Section \ref{Cn}. To see why observe that the function $n^{1/3}f_1(\o)$ does not approximate $n^{1/3}t \o-h_n(\o)$ well when $\o$ is near $0$. The fact that the contribution near $0$ is negligible is nontrivial because the function $n^{1/3}t \o -h_n(\o)$ has poles at $0$ and $\frac{-a-b}{n^{1/3}}$, and our contour $\mc{C}_n$ is being pinched between them; we will use Lemma \ref{C bound} to show that the asymptotics of $\det(1-\K_n)_{L^2(\mathcal{C}_n)}$ are not affected by these poles

\begin{lemma} \label{C bound} There exists a sequence of contours $\{\mc{C}_n\}_{n \geq N}$ such that:
	\begin{itemize}
		
		\item[(a)] For all $n$, the contour $\mathcal{C}_n$ encircles $0$ counterclockwise, but does not encircle $(-a-b)n^{-1/3}$. 
		
		\item[(b)] $\mc{C}_n$ intersects the point $\lambda$ at angles $-\pi/3$ and $-2\pi/3$. 
		
		\item[(c)] For all $\epsilon>0$, there exists $\eta, N_{\epsilon}>0$ such that for all $n>N_{\epsilon}$, $\o \in \mathcal{C}_n \setminus \mathcal{C}_n^{\epsilon}$ and $z \in \mc{\gamma}_r$, we have
		
		$$\Re[n^{1/3}t(z-\o)+h_n(z)-h_n(\o)] \leq -n^{1/3}\eta,$$
		where $\mc{C}_n^{\epsilon}=\mc{C}_n \cap B_{\epsilon}(\lambda).$
		
		\item[(d)] There is a constant $C$ such that for all $\o \in \mathcal{C}_n$, 
		
		$$\Re[n^{1/3}t(\lambda-\o)+h_n(\lambda)-h_n(\o)] \leq n^{1/9}C.$$
		
	\end{itemize}
\end{lemma}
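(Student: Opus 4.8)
The plan is to build the contour $\mc{C}_n$ explicitly by patching together three pieces and then verify properties (a)--(d) one at a time. Near $\lambda$, I would take $\mc{C}_n$ to coincide with the two straight rays leaving $\lambda$ at angles $-\pi/3$ and $-2\pi/3$; this is forced by (b) and is exactly the local shape dictated by the cubic Taylor expansion $n^{1/3}f_1(\o) = n^{1/3}f_1(\lambda) + \tfrac{n^{1/3}}{6}f_1'''(\lambda)(\o-\lambda)^3 + \cdots$, since $f_1'''(\lambda)>0$ means these are the two directions of steepest ascent of $\Re[f_1]$ away from $\lambda$ (mirroring the descent directions $\pm 2\pi/3$ used for $\mc{\gamma}_r$, rotated to match the Airy-kernel contour orientation). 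Away from $\lambda$, I would close these rays up into a loop that encircles $0$ but stays to the left of $-\tfrac{a+b}{n^{1/3}}$ and — crucially — passes within distance of order $n^{-1/3}$ of the origin but not closer, so that the pole of $h_n$ at $0$ and the pole at $-\tfrac{a+b}{n^{1/3}}$ are separated by the contour without the contour being pinched. A natural choice is to let the loop hug a circle of radius $c\, n^{-1/3}$ around $0$ on the side near the origin, for a suitably small constant $c \in (0, \lambda)$. Property (a) is then immediate from the construction, and (b) holds by design.

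For property (d) I would use Lemma \ref{chaos control}: on the portion of $\mc{C}_n$ that stays a fixed distance $\delta$ away from $0$, the error term $n^{1/9}f_2(\o) + r_n(\o)$ is $O(n^{1/9})$ by \eqref{control}, while $n^{1/3}f_1(\lambda) - n^{1/3}f_1(\o) \le 0$ because $\mc{\gamma}_r$-style steep-descent reasoning applies equally to $\mc{C}_n$ once we know $\Re[f_1]$ increases along $\mc{C}_n$ away from $\lambda$ (which is what the angle condition at $\lambda$ guarantees locally, and which the global description of the level lines of $\Re[f_1]$ in Figure \ref{contour} lets us arrange globally). On the remaining portion near $0$, where $|\o|$ is of order $n^{-1/3}$, one computes $h_n(\o)$ directly: writing $\o = n^{-1/3}w$ with $|w|$ bounded below and above by constants, $\tilde h_n(n^{1/3}\o) = -n\log\frac{a+w}{w} - m\log\frac{a+w}{a+b+w}$, and since $m = \frac{a}{b}n + O(n^{2/3})$ the leading term is $-n\log\frac{a+w}{w} - \frac{a}{b}n\log\frac{a+w}{a+b+w}$, which is $O(n)$ — too big. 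The point is that this $O(n)$ term must be shown to have the right sign, i.e. $\Re$ of it is bounded above by $O(n^{1/9})$; this is where the precise choice of the constant $c$ and the precise geometry of the near-origin arc enter, and is the technical heart of (d).

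Property (c) is the main obstacle and the reason the lemma is deferred to its own section. It asserts a uniform exponential gain $e^{-n^{1/3}\eta}$ once $\o$ leaves an $\epsilon$-neighbourhood of $\lambda$, \emph{uniformly over all $z \in \mc{\gamma}_r$}. I would split $\Re[n^{1/3}t(z-\o) + h_n(z) - h_n(\o)]$ as $n^{1/3}\big(\Re f_1(z) - \Re f_1(\lambda)\big) - n^{1/3}\big(\Re f_1(\o) - \Re f_1(\lambda)\big) + \big(\text{error}_n(z) - \text{error}_n(\o)\big)$. The first bracket is $\le 0$ and in fact $\le -n^{1/3}\eta_1$ for $z$ outside a neighbourhood of $\lambda$ on $\mc{\gamma}_r$ by steep descent; but for $z$ \emph{near} $\lambda$ it only gives $\le 0$, so the required gain must come entirely from the $\o$-term $-n^{1/3}(\Re f_1(\o) - \Re f_1(\lambda))$, which is $\le -n^{1/3}\eta_2 < 0$ precisely because $\mc{C}_n$ is an ascent contour for $\Re f_1$ and $\o$ is $\epsilon$-away from $\lambda$ — provided $\o$ is not near $0$. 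For $\o$ near $0$ the function $f_1$ no longer approximates $n^{1/3}t\o - h_n(\o)$, so there one argues directly with $h_n(\o)$ as in the sketch for (d), showing $\Re[-h_n(\o)] \to -\infty$ like $-n^{1/3}\cdot(\text{const})$ along the chosen near-origin arc. Combining, one takes $\eta = \min(\eta_1,\eta_2,\dots)/2$ and absorbs the $O(n^{1/9})$ errors from Lemma \ref{chaos control} (on the part of $\mc{C}_n$ bounded away from $0$) and the near-origin estimate into the exponential. The delicate balancing act — making $c$ small enough that the near-origin arc contributes a genuine loss but large enough that the arc still separates the two poles, all while the bound stays uniform in $z$ — is what makes this step laborious, and I would expect to need the explicit form of $h_n$ together with careful case analysis on the location of $\o$ along $\mc{C}_n$.
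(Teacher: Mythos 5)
Your macro-level plan (rays from $\lambda$, a closing loop threading between the poles at $0$ and $-(a+b)n^{-1/3}$, Lemma \ref{chaos control} away from the origin, and reducing (c) to a bound on the $\o$-term because $z\in\mc{\gamma}_r$ may sit at $\lambda$) matches the paper, but the two steps you defer are exactly where the content of the lemma lies, and the near-origin geometry you propose would make the bound fail. On an arc $|\o|=cn^{-1/3}$ with $c$ ``suitably small'', write $n^{1/3}\o=ce^{\i\phi}$; then
$\Re[-h_n(\o)]=n\log\big|\tfrac{a+ce^{\i\phi}}{ce^{\i\phi}}\big|+m\log\big|\tfrac{a+ce^{\i\phi}}{a+b+ce^{\i\phi}}\big|\approx n\big(\log\tfrac{a}{c}+\tfrac{a}{b}\log\tfrac{a}{a+b}\big)$,
which is $\geq Cn$ with $C>0$ once $c$ is small, while $h_n(\lambda)$ and $n^{1/3}t(\lambda-\o)$ are only $O(n^{1/3})$; so both (c) and (d) are violated, by an error of size $e^{Cn}$. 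It is therefore not enough to pass anywhere strictly between the two poles: the crossing point must be chosen so that $|a+n^{1/3}\o|/|n^{1/3}\o|$ is bounded below $1$, which is why the paper crosses the real axis at $-an^{-1/3}$ along a vertical segment $-an^{-1/3}+\i[-p,p]$ and proves Lemma \ref{one third bound}: there $\big|\tfrac{a+n^{1/3}\o}{n^{1/3}\o}\big|=\tfrac{|y|}{\sqrt{a^2+y^2}}<1$, so the factor raised to the power $n$ is exponentially small (a gain of order $e^{-Cn}$, not $e^{-Cn^{1/3}}$ as you state) and dominates the remaining factors. Your ``suitably small $c$'' points in the opposite direction (one needs the crossing near $-a\,n^{-1/3}$, i.e.\ $c$ close to $a$), and this sign analysis, which you explicitly leave open as ``the technical heart'', is the missing idea; relatedly, the loop must pass to the right of $-(a+b)n^{-1/3}$, not to the left of it, to satisfy (a).

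The second gap is the intermediate regime. Your argument covers $|\o|$ bounded below (Lemma \ref{chaos control} plus level lines of $\Re[f_1]$) and $|\o|\asymp n^{-1/3}$, but any closed contour joining these scales contains points with $|\o|\asymp n^{\delta-1/3}$ for every $\delta\in(0,1/3)$, where Lemma \ref{chaos control} does not apply ($f_2$ and $r_n$ are no longer bounded) and the $n^{-1/3}$-scale computation degenerates. The paper needs a separate estimate uniform in $\delta$ (Lemma \ref{delta bound}: $\Re[n^{1/3}t\o+h_n(\o)]\sim_{\delta}\Re[n^{1/3}f_1(\o)]\sim_{\delta}Mn^{1-2\delta}/c^2$) together with the monotonicity of $\Re[-f_1]$ along the vertical segment (Lemma \ref{derivative}) to patch the four regimes in the proof of part (c); your sketch is silent on this range. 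A smaller point: taken literally as rays at angles $-\pi/3$ and $-2\pi/3$, your local piece at $\lambda$ contains a steepest-\emph{descent} direction of $\Re[f_1]$ (at angle $-\pi/3$ one has $\cos 3\theta=-1$), along which $\Re[f_1(\o)]<\Re[f_1(\lambda)]$ and (c) fails; the contour must leave $\lambda$ along the ascent rays $e^{\pm 2\pi\i/3}$ (which are the unoriented tangent directions meant in (b)), so your justification that the pair $\{-\pi/3,-2\pi/3\}$ consists of steepest-ascent directions is not correct.
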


The next lemma allows us to control $\Re[n^{1/3}tz+h_n(z)]$ on the contour $\mc{\gamma}_r$.

\begin{lemma}
	\label{gamma bound}
	For all $\epsilon>0$, and for sufficiently large $r$, there exists $C,N_{\epsilon}>0$, such that for all $\o \in \mc{C}_n$, and $z \in \mathcal{\gamma}_r \setminus \mathcal{\gamma}_r^{\epsilon}$, then 
	$$\Re[h_n(z)-h_n(\o)+n^{1/3}t(z-\o)] \leq -n^{-1/3} C.$$
\end{lemma}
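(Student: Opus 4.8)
The plan is to prove Lemma \ref{gamma bound} by estimating separately the ``$z$-part'' and the ``$\o$-part'' of the exponent, writing
\[
\Re[n^{1/3}t(z-\o)+h_n(z)-h_n(\o)]=\Re[n^{1/3}tz+h_n(z)]+\Re[-n^{1/3}t\o-h_n(\o)];
\]
the leading $n^{1/3}$ contributions of the two bounds will cancel, leaving a strictly negative main term. For the $\o$-part I would invoke Lemma \ref{C bound}(d): since $\lambda$ is a fixed point of $(0,\infty)$, Lemma \ref{chaos control} gives $|f_2(\lambda)|\le C$ and $|r_n(\lambda)|\le C$ for $n$ large, so by the identity $n^{1/3}tz+h_n(z)=n^{1/3}f_1(z)+n^{1/9}f_2(z)+r_n(z)$ we have $\Re[n^{1/3}t\lambda+h_n(\lambda)]=n^{1/3}f_1(\lambda)+O(n^{1/9})$, and Lemma \ref{C bound}(d) then yields, uniformly over $\o\in\mathcal{C}_n$,
\[
\Re[-n^{1/3}t\o-h_n(\o)]\le n^{1/9}C-\Re[n^{1/3}t\lambda+h_n(\lambda)]\le -n^{1/3}f_1(\lambda)+O(n^{1/9}).
\]

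For the $z$-part, the point is that $\gamma_r$ stays bounded away from the singularity at $0$: on its vertical segment $\Re z=\lambda$, and on its two rays $|z|\ge|\lambda\pm \i r|$, so $|z|\ge\lambda$ on all of $\gamma_r$. Hence for fixed $\epsilon$ and $n$ large Lemma \ref{chaos control} gives $|f_2(z)|\le C_\epsilon$ and $|r_n(z)|\le C_\epsilon$ for every $z\in\gamma_r$, whence $\Re[n^{1/3}tz+h_n(z)]=n^{1/3}\Re f_1(z)+O(n^{1/9})$ uniformly in $z\in\gamma_r$. It then remains to bound $\Re f_1(z)$ on $\gamma_r\setminus\gamma_r^{\epsilon}$. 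For $r$ sufficiently large the contour $\gamma_r$ is steep descent with respect to $\Re f_1$ at $\lambda$ (Lemma \ref{steep descent} on the vertical segment, and on the rays $f_1'(z)$ is dominated by $t$ while $\Re z$ strictly decreases), so $\Re f_1$ is strictly decreasing as one moves along $\gamma_r$ away from $\lambda$; moreover $\Re z\to-\infty$ along the rays, so $\Re f_1(z)=t\Re z+o(1)\to-\infty$ there. Therefore $\sup_{z\in\gamma_r\setminus\gamma_r^{\epsilon}}\Re f_1(z)\le f_1(\lambda)-c(\epsilon)$ for some $c(\epsilon)>0$ (the supremum over the compact part is attained and is $<f_1(\lambda)$ by strict monotonicity, and the ray tails contribute values tending to $-\infty$), which gives $\Re[n^{1/3}tz+h_n(z)]\le n^{1/3}f_1(\lambda)-c(\epsilon)n^{1/3}+O(n^{1/9})$ uniformly over $z\in\gamma_r\setminus\gamma_r^{\epsilon}$.

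Adding the two estimates, the $n^{1/3}f_1(\lambda)$ terms cancel and I obtain
\[
\Re[n^{1/3}t(z-\o)+h_n(z)-h_n(\o)]\le -c(\epsilon)n^{1/3}+O(n^{1/9}),
\]
uniformly over $\o\in\mathcal{C}_n$ and $z\in\gamma_r\setminus\gamma_r^{\epsilon}$, which for $n\ge N_\epsilon$ is at most $-\tfrac12 c(\epsilon)n^{1/3}$; this is stronger than, and in particular implies, the claimed bound. The main obstacle is not any single estimate --- Lemmas \ref{C bound} and \ref{chaos control} do the heavy lifting --- but keeping the estimates \emph{uniform} over the non-compact set $\gamma_r\setminus\gamma_r^{\epsilon}$: one must check that $\Re f_1$ is bounded away from $f_1(\lambda)$ all the way out along the infinite rays (which is precisely why $r$ must be taken large, so that the full $\gamma_r$, rays included, is steep descent and $\Re f_1\to-\infty$ along them), and check that the sub-leading corrections $n^{1/9}f_2+r_n$ can never close the $c(\epsilon)n^{1/3}$ gap, which reduces to the observation that $\gamma_r$ avoids the pole at $0$ so that Lemma \ref{chaos control} applies along it.
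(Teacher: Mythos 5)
Your argument is correct and is essentially the paper's own proof: both split the exponent through the point $\lambda$, bound the $\o$-contribution uniformly over $\mathcal{C}_n$ by Lemma \ref{C bound}(d), and bound the $z$-contribution on $\mathcal{\gamma}_r \setminus \mathcal{\gamma}_r^{\epsilon}$ by combining the steep-descent property of $\Re[f_1]$ along $\mathcal{\gamma}_r$ (for $r$ large) with the uniform control of $f_2$ and $r_n$ away from $0$ from Lemma \ref{chaos control}. Your added care about uniformity along the infinite rays, and the resulting bound $-c(\epsilon)n^{1/3}+O(n^{1/9})$, only makes explicit what the paper leaves implicit, and matches the $-n^{1/3}C$ estimate the paper actually uses later (the exponent $-n^{-1/3}C$ in the statement being a typo).
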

\begin{proof}
	We have already shown that $\mc{\gamma}_r$ is steep descent with respect to $f_1(z)$. 
	
	By Lemma \ref{chaos control}, $|r_n| \leq C, |f_2| \leq Cn^{1/9}$ away from $0$. We have
	\begin{align}h_n(z)-h_n(\o)+n^{1/3}t(z-\o)=&n^{1/3}(f_1(z)-f_1(\o))+n^{1/9}(f_2(z)-f_2(\o))+(r_n(z)-r_n(\o))\nonumber\\
	\leq n^{1/3}(f_1(z)-&f_1(\o))+n^{1/9}C+C \leq n^{1/3}(f_1(z)-f_1(\o)+\delta), \nonumber \end{align}
	for any sufficiently small $\delta>0$. Because $f_1(z)$ is decreasing as we move away from $\lambda$, we have  
	$$n^{1/3}tz+h_n(z)<n^{1/3}t \lambda+h_n(\lambda)+Cn^{1/9}.$$
	Thus by $\ref{chaos control}$, we have that for all $\epsilon>0$ there exists $C$ such that for $z \in \mathcal{\gamma}_r \setminus \mathcal{\gamma}_r^{\epsilon}$,
	$$\Re[h_n(z)-h_n(\lambda)+n^{1/3}t(z-\lambda)] \leq -n^{1/3} C.$$
	By Lemma \ref{C bound} (d), we have
	$$\Re[h_n(\lambda)-h_n(\o)+n^{1/3}t(\lambda-\o)] \leq n^{1/9}C,$$
	for $\o \in \mathcal{C}_n.$
	This completes the proof
\end{proof}

\subsection{Localizing the integral}\label{localize}
In this section we will use Lemma \ref{C bound} and Lemma \ref{gamma bound} to show that the asymptotics of $\det(1-\mathsf{K}_n)_{L^2(\mathcal{C}_n)}$ do not change if we replace $\mathcal{C}_n$ with $\mathcal{C}_n^{\epsilon}=\mathcal{C}_n \cap B_{\epsilon}(\lambda)$, and replace the contour $\mathcal{\gamma}_r$ defining $\mathsf{K}_n$ with the contour $\mathcal{\gamma}_r^{\epsilon}=\mathcal{\gamma}_r \cap B_{\epsilon}(0).$

First we change variables setting $z=\lambda+n^{-1/9} \overline{z},\o=\lambda+n^{-1/9} \overline{\o}$, and $\o'=\lambda+n^{-1/9} \overline{z}$. 
\begin{definition} Define the contours $\mathcal{D}_0=[-\i \infty, \i \infty]$, and $\mathcal{D}_0^{ \delta}=\mc{D}_0 \cap B_{ \delta}(0).$ (We will often use $\delta=n^{1/9} \epsilon$.)
\end{definition}
Our change of variables applied to the kernel $\msf{K}_n^{\epsilon}$ gives
\begin{multline}\overline{\mathsf{K}}^{\epsilon}_n(\overline{\o},\overline{\o}')=\frac{1}{2 \pi \i}\int_{\mathcal{D}_0^{n^{1/9} \epsilon}} \frac{1}{(\overline{z}-\overline{\o})(\overline{z}-\overline{\o}')} \frac{(\lambda+n^{-1/9} \overline{z})}{(\lambda+n^{-1/9}\overline{\o})} e^{n^{1/3} f_1(\lambda+n^{-1/9} \overline{z})-f_1(\lambda+n^{-1/9} \overline{\o})} \\ \times e^{n^{1/9} f_2(\lambda+n^{-1/9} \overline{z})-f_2(\lambda+n^{-1/9} \overline{\o})} e^{r_n(\lambda+n^{-1/9} \overline{z})-r_n(\lambda+n^{-1/9} \overline{\o})} d \overline{z}. \label{K def}
\end{multline}

\begin{definition} The contours $\mathcal{C}_{-1}$ and $\mathcal{C}_{-1}^{\epsilon}$ are defined as $\mathcal{C}_{-1}=(e^{-2\pi \i/3} \infty, -1) \cup [-1,e^{2 \pi \i/3} \infty)$ and $\mathcal{C}_{-1}^{\epsilon}=\mathcal{C}_{-1} \cap B_{n^{1/9} \epsilon}(-1).$ 
\end{definition}
By changing variables, for each $m$ we have 

$$\int_{(\mc{C}_{n}^{\epsilon})^m} \det(\mathsf{K}^{\epsilon}_n(\o_i,\o_j))_{i,j=1}^m d \o_1...d\o_m=\int_{(\mathcal{C}_{-1}^{n^{1/9}\epsilon})^m} \det(\overline{\mathsf{K}}_{n}^{\epsilon}(\overline{\o}_i,\overline{\o}_j))_{i,j=1}^m d \overline{\o}_1...d \overline{\o}_m.$$
This equality follows, because after rescaling the contour $C_{n}^{\epsilon}$, we can deform it to the contour $\mathcal{C}_{-1}^{n^{1/9} \epsilon}$ without changing its endpoints. The previous equality implies
$$\det(1-\mathsf{K}^{\epsilon}_n)_{L^2(\mathcal{C}_{\epsilon}^{\epsilon})}=\det(1-\overline{\mathsf{K}}_{n}^{ \epsilon})_{L^2(\mathcal{C}_{-1}^{n^{1/9} \epsilon})}.$$

We will make this change of variables often in the following arguments. Given a contour such as $\mc{C}_n$ or $\mc{\gamma}_r$, we denote the contour after the change of variables by $\mc{\overline{C}}_n$ or $\mc{\overline{\mathcal{\gamma}}}_r$. Now we are ready to localize our integrals.

\begin{proposition}
	\label{cut}
	For any sufficiently small $\epsilon>0$, 
	
	$$\lim_{n \to \infty} \det(1-\mathsf{K}_n(\o,\o'))_{L^2(\mathcal{C})}=\lim_{n \to \infty} \det(1-\mathsf{K}_{n}^{\epsilon}(\o,\o'))_{L^2(\mathcal{C}_{n}^{\epsilon})},$$
	where 
	
	$$\mathsf{K}_{n}^{\epsilon}=\frac{1}{2 \pi \i} \int_{\mathcal{\gamma}_r^{\epsilon}} \frac{e^{n^{1/3}t(z-\o)+h_n(z)-h_n(\o)}}{(z-\o)(z-\o')} \frac{z}{w} dz.$$
\end{proposition}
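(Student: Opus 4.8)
The plan is to show that replacing the full contours $\mc{C}_n$ and $\mc{\gamma}_r$ by their small arcs $\mc{C}_n^\epsilon$ and $\mc{\gamma}_r^\epsilon$ changes the Fredholm determinant by something vanishing as $n\to\infty$. First I would split the kernel $\msf{K}_n$ as $\msf{K}_n = \msf{K}_n^\epsilon + (\msf{K}_n - \msf{K}_n^\epsilon)$, where $\msf{K}_n^\epsilon$ has the $z$-integral restricted to $\mc{\gamma}_r^\epsilon$, and similarly split the domain $\mc{C}_n = \mc{C}_n^\epsilon \cup (\mc{C}_n\setminus\mc{C}_n^\epsilon)$. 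By Lemma \ref{gamma bound}, on the piece $z\in\mc{\gamma}_r\setminus\mc{\gamma}_r^\epsilon$ the integrand of $\msf{K}_n(\o,\o')$ carries a factor $\exp\big(\Re[h_n(z)-h_n(\o)+n^{1/3}t(z-\o)]\big)\le \exp(-n^{-1/3}C)$ uniformly for $\o\in\mc{C}_n$; together with the polynomial factor $\frac{z}{\o(z-\o)(z-\o')}$ and the $e^{n^{1/3}t\,\Re(z)}$ decay at infinity along the rays of $\mc{\gamma}_r$ (which makes the $z$-integral converge and gives a uniform bound on its modulus), this yields $\sup_{\o,\o'\in\mc{C}_n}\big|\msf{K}_n(\o,\o') - \msf{K}_n^\epsilon(\o,\o')\big| \to 0$. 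Likewise Lemma \ref{C bound}(c) shows that for $\o\in\mc{C}_n\setminus\mc{C}_n^\epsilon$ and any $z\in\mc{\gamma}_r$ the integrand is exponentially small, so $\sup_{\o\in\mc{C}_n\setminus\mc{C}_n^\epsilon,\,\o'\in\mc{C}_n}\big|\msf{K}_n^\epsilon(\o,\o')\big|\le C e^{-n^{1/3}\eta}$.

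Next I would feed these sup-norm bounds into the standard Hadamard-type estimate for Fredholm determinants: if $\msf{A}$ and $\msf{B}$ are kernels on a contour $\mc{C}$ of finite total length $\ell$ with $\|\msf{A}\|_\infty,\|\msf{B}\|_\infty\le M$, then
\begin{equation*}
\big|\det(1-\msf{A})_{L^2(\mc{C})} - \det(1-\msf{B})_{L^2(\mc{C})}\big| \le \ell\,\|\msf{A}-\msf{B}\|_\infty \sum_{k\ge 1} \frac{k^{k/2}(\ell M)^{k-1}}{(k-1)!},
\end{equation*}
and similarly the determinant over $\mc{C}_n$ versus over the sub-contour $\mc{C}_n^\epsilon$ differs by a term controlled by $\|\msf{K}_n^\epsilon\|_\infty$ on the complementary arc times a convergent series. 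The one subtlety here is that the contours $\mc{C}_n$ are noncompact (they have infinite rays), so before invoking these bounds I would note — as the paper does for $\mc{\gamma}_r$ — that along the rays of $\mc{C}_n$ the factor $e^{-n^{1/3}t\,\o}$ forces the relevant kernel entries to be integrable, so effectively only a compact portion of $\mc{C}_n$ contributes and $\ell$ may be taken finite (or one works directly with the trace-class norm, bounding $\|\msf{K}_n\|_1$ via the product of the decay factor and an $L^1$ bound on the polynomial part). Combining, $\det(1-\msf{K}_n)_{L^2(\mc{C}_n)}$ and $\det(1-\msf{K}_n^\epsilon)_{L^2(\mc{C}_n^\epsilon)}$ differ by $o(1)$, which gives the claim.

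The main obstacle I anticipate is the non-uniformity and noncompactness issue: making the sup-norm bounds genuinely uniform in $\o,\o'$ ranging over the $n$-dependent, unbounded contours $\mc{C}_n$ (whose geometry near $\lambda$ is fixed by Lemma \ref{C bound}(b) but whose overall shape varies with $n$), and simultaneously controlling the tails of both the $z$-integral and the Fredholm expansion so that the series in the Hadamard bound converges with an $n$-independent constant. The clean way to handle this is to absorb the exponential decay from Lemmas \ref{C bound}(c)–(d) and \ref{gamma bound} into the kernel first, reducing everything to bounded kernels on contours of uniformly bounded length (up to exponentially small errors from the tails), after which the determinant comparison is routine. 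The Taylor-expansion/steep-descent content of why the limit equals $F_{\mathrm{GUE}}$ is deferred to later sections and is not needed here — this proposition only asserts that localization is harmless.
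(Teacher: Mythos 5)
Your overall architecture is the same as the paper's: split off the contribution of $z\in\mathcal{\gamma}_r\setminus\mathcal{\gamma}_r^{\epsilon}$ using Lemma \ref{gamma bound}, split off the contribution of $\o\in\mathcal{C}_n\setminus\mathcal{C}_n^{\epsilon}$ using Lemma \ref{C bound}(c), and control the Fredholm expansion by Hadamard's inequality (the paper implements the first reduction by dominated convergence on the series, justified by Lemmas \ref{dom} and \ref{dom 2}, and the second by the term-by-term estimate of Lemma \ref{max bound lemma} summed via Lemma \ref{sum}). However, two of your claims are incorrect as stated, and one of them hides the actual quantitative work. First, $\mathcal{C}_n$ is not a noncompact contour with infinite rays: by Definition \ref{Cn def} it is a closed loop of uniformly bounded length encircling $0$ and passing through $\lambda$; the unbounded contour is $\mathcal{\gamma}_r$, and the decay along its rays comes from $e^{n^{1/3}tz}$ as $\Re[z]\to-\infty$, not from any decay in the $\o$ variable. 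Your proposed fix for a nonexistent problem (integrability along rays of $\mathcal{C}_n$ from a factor $e^{-n^{1/3}t\o}$) would in fact fail, since along directions $e^{\pm 2\pi\i/3}$ that factor grows.

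Second, and more seriously, the reduction to ``bounded kernels on contours of uniformly bounded length'' is not available, so the determinant comparison is not routine at that point. On $\mathcal{C}_n^{\epsilon}$ the unrescaled kernel $\mathsf{K}_n$ is not bounded uniformly in $n$: the natural scale at the critical point is $n^{-1/9}$, so $\mathsf{K}_n$ is of size $n^{1/9}$ near $\lambda$, and on the part of $\mathcal{C}_n$ at distance $O(n^{-1/3})$ from the origin the factor $z/\o$ contributes an extra $n^{1/3}$. After the rescaling that makes the kernel uniformly bounded (Lemma \ref{max bound lemma}), the contour $\overline{\mathcal{C}}_n$ has length of order $n^{1/9}$. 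Either way the quantity $\ell M$ entering your Hadamard series grows like a power of $n$, so the series contributes a factor of order $e^{Cn^{2/9}}$ rather than an $n$-independent constant; one must then verify that the exponentially small prefactor $n^{4/9}e^{-n^{1/3}\eta}$ supplied by Lemma \ref{C bound}(c) (and the analogous $e^{-n^{1/3}C}$ from Lemma \ref{gamma bound}) beats this growth. That competition is exactly the computation in \eqref{sum bound} combined with Lemma \ref{sum}, and it is the step your write-up asserts away. Once these $n$-dependences are tracked (including the $n^{1/3}$ from $1/\o$ in your sup-norm bound on $\mathsf{K}_n-\mathsf{K}_n^{\epsilon}$, which the exponential decay still absorbs), your argument closes and is essentially the paper's proof.
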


\begin{proof}
	The proof will have two steps, and will use several lemmas that are proved in Section 4. In the first step we localize the integral in the $z$ variable and show that $\lim_{n \to \infty}\det(1-\mathsf{K}_n)_{L^2(\mathcal{C}^{\epsilon})}=\lim_{n \to \infty}\det(1-\mathsf{K}_n^{\epsilon})_{L^2(\mathcal{C}^{\epsilon})}$ using dominated convergence. In order to prove this, we appeal to Lemmas \ref{dom} and \ref{dom 2} to show that the Fredholm series expansions are indeed dominated. In the second step we localize the integral in the $\o,\o'$ variables by using Lemma \ref{max bound lemma} to find an upper bound for $\det(1+K_n)_{L^2(\mc{C}_n)}-\det(1+K_n)_{L^2(\mc{C}_n^{\epsilon})}.$ Then we appeal to Lemma \ref{sum} to show that this upper bound converges to $0$ as $n \to \infty$. 
	
	\item[\textbf{Step 1:}] \quad By Lemma \ref{gamma bound}, for any $\epsilon>0$, there exists a $C',N>0$ such that if $\o \in C_n$ and $z \in \mathcal{\gamma}_r \setminus \mathcal{\gamma}_r^{\epsilon}$, then for all $n>N$,
	
	$$\Re[h_n(z)-h_n(\o)+n^{1/3}t(z-\o)] \leq -n^{1/3} C'.$$
	
	We bound our integrand on $\mathcal{\gamma}_r \setminus \mathcal{\gamma}_r^{\epsilon}$, $ \o,\o' \in \mc{C}_n^{\epsilon},$ 
	$$\left|\frac{e^{h_n(z)-h_n(\o)+n^{1/3}t(z-\o)}}{(z-\o)(z-\o')} \frac{z}{\o}\right| \leq \frac{C}{\delta^2} z e^{-n^{1/3 C'}} \xrightarrow[n \to \infty]{pointwise} 0.$$ 
	(the $\delta^2$ comes from the fact that $|z-\o| \geq \delta$). By Lemma \ref{chaos control}, there exists a $\eta>0$ such that for sufficiently large $n$,
	$$ \left|\frac{e^{h_n(z)-h_n(\o)+n^{1/3}t(z-\o)}}{(z-\o)(z-\o')} \frac{z}{\o}\right| <\left|\frac{e^{n^{1/3}(f_1(z)-f_1(\o)+\eta)}}{(z-\o)(z-\o')} \frac{z}{\o}\right|.$$
	The linear term of $f_1(z)$ in (\ref{f def}) implies
	$$\frac{1}{2 \pi \i}\int_{\mc{\gamma}_r} \left|\frac{e^{n^{1/3}(f_1(z)-f_1(\o)+\eta)}}{(z-\o)(z-\o')} \frac{z}{\o}\right| dz<\infty.$$
	\medskip
	
	In the previous inequality we should write $|dz|$ instead of $dz$. We will often omit the absolute value in the $d\o$ portion of the complex integral when the integrand is a positive real valued function. 
	
	\medskip

	\noindent So for each $\o,\o'$, by dominated convergence 
	$$\frac{1}{2 \pi \i}\int_{\mathcal{\gamma}_r \setminus \mathcal{\gamma}_r^{\epsilon}}\frac{e^{h_n(z)-h_n(\o)+n^{1/3}t(z-\o)}}{(z-\o)(z-\o')} \frac{z}{\o} dz \to 0 \quad \text{as} \quad n \to \infty,$$
	So $\lim_{n \to \infty}\mathsf{K}_n^{\epsilon}(\o,\o') = \lim_{n \to \infty} \mathsf{K}_n(\o,\o').$
	
	Now by Lemma \ref{dom}, and \ref{dom 2}, both Fredholm determinant expansions $\det(1-\mathsf{K}_n)_{L^2(\mathcal{C}^{\epsilon})}$ and $\det(1-\mathsf{K}^{\epsilon}_n)_{L^2(\mathcal{C}^{\epsilon})}$, are absolutely bounded uniformly in $n$. Thus we can apply dominated convergence to get
	
	\begin{equation}\lim_{n \to \infty}\det(1-\mathsf{K}_n)_{L^2(\mathcal{C}^{\epsilon})}=\lim_{n \to \infty}\det(1-\mathsf{K}_n^{\epsilon})_{L^2(\mathcal{C}^{\epsilon})}. \label{fred equal} \end{equation}
	
	\item[\textbf{Step 2:}] \quad In the expansion 
	$$\det(1-\mathsf{K}_n)_{L^2(\mathcal{C}_n)}=\sum_{m=0}^{\infty} \frac{1}{m!} \int_{(\mc{C}_n)^m} \det(\mathsf{K}_n(\o_i,\o'_j))_{i,j=1}^n d\o_1,...,d\o_m.$$ 
	The $m$th term can be decomposed as the sum
	$$\int_{(\mc{C}_n^{\epsilon})^m} \det( \mathsf{K}_n(\o_i,\o_j))_{i,j=1}^n d\o_1...d\o_m+\int_{\mc{C}_n^m \setminus (\mc{C}_n^{\epsilon})^m} \det(\mathsf{K}_n(\o_i,\o_j))_{i,j=1}^n d\o_1...d\o_m.$$
	Lemma \ref{max bound lemma} along with Hadamard's bound on the determinant of a matrix in terms of it's row norms, implies that when $\o_1\in \mc{C}_n \setminus \mc{C}_n^{\epsilon}$ and $\o_2,...,\o_m \in \mc{C}^n$,
	\begin{equation} |\det(\overline{\mathsf{K}}_{n}(\o_i, \o_j))_{i,j=1}^m| \leq  m^{m/2} M^{m-1/2} L_4 n^{4/9} e^{-n^{1/3} \eta}  \to 0 \text{ as } n \to \infty. \label{determinant bound}\end{equation}
	
	Now let $R$ be the maximum length of the paths $\mc{C}_n$. The rescaled paths $\overline{\mc{C}_n}$ will always have length less than $n^{1/9}R$. We have
	
	\begin{align}
	&\int_{\mc{C}_n^{m} \setminus (\mc{C}_n^\epsilon)^m} |\det(\mathsf{K}_n(\o_i, \o_j))_{i,j=1}^m| d\o_1...d\o_m\nonumber\\  
	&\hspace{2cm}\leq 
	m\int_{\mc{C}_n \setminus \mc{C}_n^{\epsilon}} d\o_1 \int_{\mc{C}_n^{m-1}} |\det(\mathsf{K}_n(\o_i, \o_j))_{i,j=1}^m| d\o_2...d\o_m \nonumber\\
	&\hspace{2cm}\leq m\int_{\overline{\mc{C}}_n \setminus \overline{\mc{C}}_n^{\epsilon}} d\overline{\o}_1 \int_{\overline{\mc{C}}_n^{m-1}}   |\det(\overline{\mathsf{K}}_{n}(\overline{\o}_i, \overline{\o}_j))_{i,j=1}^m|     d\overline{\o}_2...d\overline{\o}_m \nonumber\\
	&\hspace{2cm}\leq \int_{\overline{\mc{C}}_n \setminus \overline{\mc{C}}_n^{\epsilon}} d\overline{\o}_1 \int_{\overline{\mc{C}}_n^{m-1}} m^{m/2} M^{(m-1)/2} L_4 n^{4/9} e^{-n^{1/3} \eta} d\overline{\o}_2...d\overline{\o}_m \nonumber \\
	&\hspace{2cm}\leq m (n^{1/9}R)^{m} m^{m/2} M^{(m-1)/2} L_4 n^{4/9} e^{-n^{1/3} \eta}\nonumber\\
	&\hspace{2cm}\leq  e^{-n^{1/3} \eta} (n^{1/9})^mm^{1+m/2} (MR)^m n^{4/9}.
	\end{align}
	
	The first inequality follows from symmetry of the integrand in the $\o_i$. In the second inequality, we change variables from $\o_i$ to $\overline{\o}_i$. In the third inequality we use the first inequality of (\ref{determinant bound}). In the fourth inequality, we use the fact that the total volume of our multiple integral is less than $(n^{1/9}R)^m$. In the fifth inequality we rewrite and use $M^m>M^{(m-1)/2}$. 
	
	So we have
	
	\begin{multline}
	\sum_{m=1}^{\infty} \frac{1}{m!} \int_{\mc{C}_n^m \setminus (\mc{C}_n^{\epsilon})^m} |\det( \mathsf{K}_n(\o_i,\o_j))_{i,j=1}^m| d\o_1...d\o_m \\  \leq 
	\sum_{m=1}^{\infty} \frac{1}{m!}e^{-n^{1/3} \eta} (n^{1/9})^mm^{1+m/2} (MR)^m n^{4/9}  \\
	\\ = n^{4/9} e^{-n^{1/3} \eta} \sum_{m=1}^{\infty} \frac{1}{m!} (MRn^{1/9})^m m^{1+m/2} \label{sum bound}
	\end{multline}
	
	Applying Lemma \ref{sum} with $C=MRn^{1/9}$ gives. 
	
	$$n^{4/9} e^{-n^{1/3} \eta} \sum_{m=1}^{\infty} \frac{1}{m!} (MRn^{1/9})^m m^{1+m/2} \leq  n^{4/9}e^{-n^{1/3}} 16 (MRn^{1/9})^4 e^{2 (MR)^2 n^{2/9}} \xrightarrow[n \to \infty]{} 0.$$
	Thus 
	\begin{equation}\lim_{n \to \infty}\det(1-\mathsf{K}_n)_{L^2(\mc{C}_n)}=\lim_{n \to \infty} \det(1-\mathsf{K}_n)_{L^2(\mc{C}_n^{\epsilon})}. \label{fred equal 2} \end{equation}
	Combining (\ref{fred equal}) and (\ref{fred equal 2}) concludes the proof of Proposition 2.11.
	
\end{proof}

\subsection{Convergence of the kernel}

In this section we approximate $h_n(z)-h_n(\o)+n^{1/3}t(z-\o)$ by its Taylor expansion near $\lambda$, and show that this does not change the asymptotics of our Fredholm determinant. 

\begin{proposition} \label{taylor approximation}
	For sufficiently small $\epsilon>0$,

	$$\lim_{n \to \infty} \det(1-\mathsf{K}_n^{\epsilon})_{L^2(\mathcal{C}_{\epsilon}^{\epsilon})}=\lim_{n \to \infty}\det(1-\mathsf{K}_{(x)})_{L^2(\mathcal{C}_{-1})},$$
	where 
	$$\mathsf{K}_{(x)}(\overline{u},\overline{u}')=\frac{1}{2\pi \i}\int_{D'} \frac{e^{s^3/3-xs}}{e^{u^3-xu}} \frac{dz}{(z-u)(z-u')},$$
	and
	$$D'=(e^{-\pi \i /3} \infty, 0) \cup [0,e^{\pi \i/3} \infty).$$
\end{proposition}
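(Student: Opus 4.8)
The plan follows the standard scheme for convergence of Fredholm determinants: prove pointwise convergence of the rescaled kernel $\overline{\mathsf{K}}_n^{\epsilon}$ to $\mathsf{K}_{(x)}$, get an $n$-uniform bound on the Fredholm series so the limit may be taken term by term, and finally let the truncated contour $\mathcal{C}_{-1}^{n^{1/9}\epsilon}$ grow to $\mathcal{C}_{-1}$. Starting from $\overline{\mathsf{K}}_n^{\epsilon}$ in \eqref{K def}, I would Taylor expand the exponent at $\lambda$. Since $f_1'(\lambda)=f_1''(\lambda)=0$ one gets $n^{1/3}f_1(\lambda+n^{-1/9}\overline{z})=n^{1/3}f_1(\lambda)+\tfrac16 f_1'''(\lambda)\overline{z}^3+O(n^{-1/9}|\overline{z}|^4)$ on compact $\overline{z}$-sets, $n^{1/9}f_2(\lambda+n^{-1/9}\overline{z})=n^{1/9}f_2(\lambda)+f_2'(\lambda)\overline{z}+O(n^{-1/9}|\overline{z}|^2)$, and by the bound $|r_n'|\le C'$ near $\lambda$ of Lemma~\ref{chaos control}, $r_n(\lambda+n^{-1/9}\overline{z})-r_n(\lambda+n^{-1/9}\overline{\o})\to 0$. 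The terms $n^{1/3}f_1(\lambda)$, $n^{1/9}f_2(\lambda)$ cancel in the differences and $\tfrac{\lambda+n^{-1/9}\overline{z}}{\lambda+n^{-1/9}\overline{\o}}\to 1$; using $\tfrac16 f_1'''(\lambda)=\tfrac13(b\sigma/\lambda^2)^3$ and $f_2'(\lambda)=-(b\sigma/\lambda^2)x$ from the identities recorded after \eqref{f def}, the harmless dilation $\overline{z}\mapsto(b\sigma/\lambda^2)^{-1}\overline{z}$ (and likewise in $\overline{\o},\overline{\o}'$) turns the surviving exponent into $z^3/3-xz$ over $\overline{\o}^3/3-x\overline{\o}$, i.e.\ the integrand of $\mathsf{K}_{(x)}$.

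To turn pointwise convergence of integrands into convergence of kernels I would deform the inner contour, because the imaginary-axis segment $\mathcal{D}_0^{n^{1/9}\epsilon}$ is not steep descent for $z^3/3$. Inside $B_{n^{1/9}\epsilon}(0)$ I replace it by a contour $\mathcal{D}_n$ running along the rays $\arg z=\pm\pi/3$ near $0$ (where $\Re[z^3]<0$) and rejoining the endpoints $\pm\mathbf{i}n^{1/9}\epsilon$ by short arcs that lie, back in the variable $z$, in the descent sector $\arg(z-\lambda)\in(\pi/6,\pi/2)$ of $\Re[f_1]$. This deformation leaves $\overline{\mathsf{K}}_n^{\epsilon}$ unchanged: for $\epsilon$ small the integrand is holomorphic in $\overline{z}$ on $B_{n^{1/9}\epsilon}(0)$ (the singularities of $f_1,f_2,r_n$ sit at $\overline{z}=-n^{1/9}\lambda,-n^{1/9}a,-n^{1/9}(a+b)$, outside this disc), and the poles $\overline{z}=\overline{\o},\overline{\o}'$ lie near $-1$ in the left half-plane, off the region swept by the deformation. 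On $\mathcal{D}_n$ the real part of the exponent is at most $-c|\overline{z}|^3$ plus uniformly controlled lower-order terms (the quartic and higher Taylor remainder of $n^{1/3}f_1$ is $\le C\epsilon|\overline{z}|^3$ because $|\overline{z}|\le n^{1/9}\epsilon$, while $n^{1/9}f_2$ and $r_n$ are controlled by Lemma~\ref{chaos control}), so dominated convergence in $\overline{z}$ gives $\overline{\mathsf{K}}_n^{\epsilon}(\overline{\o},\overline{\o}')\to\mathsf{K}_{(x)}(\overline{\o},\overline{\o}')$ for each fixed $\overline{\o},\overline{\o}'$.

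For the Fredholm series I would produce $G\in L^1(\mathcal{C}_{-1})\cap L^2(\mathcal{C}_{-1})$, independent of $n$, with $|\overline{\mathsf{K}}_n^{\epsilon}(\overline{\o},\overline{\o}')|\le G(\overline{\o})$ for all $\overline{\o},\overline{\o}'\in\mathcal{C}_{-1}^{n^{1/9}\epsilon}$ and $n$ large. Here the factor $e^{-(\text{exponent at }\overline{\o})}$ is, on $\mathcal{C}_{-1}$, a bounded function that decays like $e^{-c|\overline{\o}|^3}$ as $|\overline{\o}|\to\infty$: the cubic $-\tfrac13(b\sigma/\lambda^2)^3\overline{\o}^3$ has negative real part of order $|\overline{\o}|^3$ on the rays of $\mathcal{C}_{-1}$, and for $\epsilon$ small the cubic dominates the Taylor remainder throughout $\mathcal{C}_{-1}^{n^{1/9}\epsilon}$; the corresponding bound before rescaling, on $\mathcal{C}_n\cap B_{\epsilon}(\lambda)$, is built into the construction of $\mathcal{C}_n$, which enters $\lambda$ along the cubic-descent directions $\arg(\o-\lambda)=-\pi/3,-2\pi/3$ (Lemma~\ref{C bound}(b),(d)). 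The inner integral $\int_{\mathcal{D}_n}|e^{(\text{exponent at }\overline{z})}|\,|d\overline{z}|$ is bounded uniformly in $n$ by the previous paragraph, and $|(z-\overline{\o})(z-\overline{\o}')|\ge c_1>0$ since $\mathcal{D}_n$ and $\mathcal{C}_{-1}$ stay uniformly separated. Then Hadamard's inequality gives $|\det\big(\overline{\mathsf{K}}_n^{\epsilon}(\overline{\o}_i,\overline{\o}_j)\big)_{i,j=1}^m|\le m^{m/2}\prod_{i=1}^mG(\overline{\o}_i)$, so the $m$-th term of the series is at most $m^{m/2}\|G\|_{L^1}^m/m!$ and the whole series is dominated uniformly in $n$ by Stirling (this parallels, and could be deduced from, the domination lemmas of Section~4). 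Writing the Fredholm expansion over $\mathcal{C}_{-1}^{n^{1/9}\epsilon}$ as $\sum_{m}\tfrac1{m!}\int_{(\mathcal{C}_{-1})^m}\mathbf{1}\{\overline{\o}_1,\dots,\overline{\o}_m\in B_{n^{1/9}\epsilon}(-1)\}\,\det\big(\overline{\mathsf{K}}_n^{\epsilon}(\overline{\o}_i,\overline{\o}_j)\big)_{i,j=1}^m\,d\overline{\o}_1\cdots d\overline{\o}_m$, the integrand converges a.e.\ to $\det\big(\mathsf{K}_{(x)}(\overline{\o}_i,\overline{\o}_j)\big)_{i,j=1}^m$ (pointwise kernel convergence, and $\mathbf{1}\{\cdot\}\to 1$ since $n^{1/9}\epsilon\to\infty$) and is dominated by $m^{m/2}\prod_iG(\overline{\o}_i)$; dominated convergence term by term and in the series then gives $\det(1-\overline{\mathsf{K}}_n^{\epsilon})_{L^2(\mathcal{C}_{-1}^{n^{1/9}\epsilon})}\to\det(1-\mathsf{K}_{(x)})_{L^2(\mathcal{C}_{-1})}$. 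Since the change of variables of \eqref{K def} preserves the Fredholm determinant, $\det(1-\mathsf{K}_n^{\epsilon})_{L^2(\mathcal{C}_n^{\epsilon})}=\det(1-\overline{\mathsf{K}}_n^{\epsilon})_{L^2(\mathcal{C}_{-1}^{n^{1/9}\epsilon})}$, which establishes the proposition — the dilation used above only replaces $\mathcal{C}_{-1}$ by a scaled copy, which deforms back to $\mathcal{C}_{-1}$ without crossing a pole $z=\overline{\o}$, $z\in D'$.

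The main obstacle is the \emph{uniformity in $n$}: the contours $\mathcal{D}_n$ and $\mathcal{C}_{-1}^{n^{1/9}\epsilon}$ grow like $n^{1/9}$, so continuity of the kernel on a fixed compact set does not suffice, and one must fix $\epsilon$ small enough that the cubic term of the Taylor expansion of $f_1$ dominates the quartic and higher remainder \emph{throughout} $B_{\epsilon}(\lambda)$, then combine this with Lemmas~\ref{chaos control} and~\ref{C bound} to control $n^{1/9}f_2$, $r_n$, and the behavior at the far ends of the contours. The second delicate point is the holomorphic deformation of the inner $\overline{z}$-contour from the imaginary axis to the steep-descent rays $\arg z=\pm\pi/3$, verifying that it crosses none of the poles at $\overline{\o},\overline{\o}'$.
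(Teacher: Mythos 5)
Your proposal is correct, and its skeleton --- Taylor expansion at $\lambda$ exploiting $f_1'(\lambda)=f_1''(\lambda)=0$ and Lemma \ref{chaos control}, pointwise convergence of the rescaled kernel, a Hadamard--Stirling domination of the Fredholm series uniform in $n$, and removal of the truncation as $n^{1/9}\epsilon\to\infty$ --- is the same as the paper's, but two of your devices differ. (i) The paper never deforms the inner contour of the prelimit kernel: it keeps $\mathcal{D}_0^{n^{1/9}\epsilon}$, where the exponential factor in $\overline{z}$ stays bounded (the limiting cubic is purely imaginary there, and the vertical line through $\lambda$ is steep descent by Lemma \ref{steep descent}), and integrability in $\overline{z}$ comes from the quadratic prefactor $1/((\overline{z}-\overline{\o})(\overline{z}-\overline{\o}'))$; only the limiting kernel is moved from $\i\R$ to $D'$, where quadratic decay makes this trivial. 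Your deformation to the rays $\arg\overline{z}=\pm\pi/3$ buys genuine cubic decay in $\overline{z}$ uniformly in $n$, but at the price of the junction arcs near $\arg(z-\lambda)=\pi/2$, where $\Re[(z-\lambda)^3]$ degenerates and the strict descent is only of quartic order; since the $f_2$ and $r_n$ corrections are of size $n^{1/9}$ and $O(1)$, you need a quantitative bound such as $\Re[f_1(z)]-f_1(\lambda)\le -c\epsilon^4$ on those arcs (true for small $\epsilon$, e.g.\ because $f_1^{(4)}(\lambda)=-24a(a+b)/\lambda^6<0$, consistent with the derivative formula in Lemma \ref{steep descent}), a point your sketch asserts only qualitatively and which the paper's choice of contour sidesteps entirely. (ii) You remove the truncation in a single dominated-convergence pass, writing the expansion over $\mathcal{C}_{-1}^{n^{1/9}\epsilon}$ as integrals over $(\mathcal{C}_{-1})^m$ with indicators and dominating by $m^{m/2}\prod_i G(\overline{\o}_i)$ with $G$ decaying like $e^{-c|\overline{\o}|^3}$; the paper instead first takes the limit on the truncated contours via Lemma \ref{dom} and then, in its Step 2, separately estimates the tail of $\det(1-\mathsf{K}_{(x)})$ between $\mathcal{C}_{-1}^{n^{1/9}\epsilon}$ and $\mathcal{C}_{-1}$. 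Your one-pass version is cleaner and equally valid, given the uniform bound on $\mathcal{C}_{-1}^{n^{1/9}\epsilon}$, which is exactly the estimate behind (\ref{omega bound})--(\ref{L}); and your early dilation $s=(b\sigma/\lambda^2)\overline{z}$ is harmless, as you note, since it rescales $\mathcal{C}_{-1}$ by a positive real factor and the contour deforms back without crossing poles --- the paper performs the same dilation as the change of variables opening its Step 2.
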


\begin{proof} Let \begin{equation}\mathsf{K}(\overline{\omega},\overline{\omega}')=\frac{1}{2 \pi \i} \int_{D'} \frac{d\overline{z}}{(\overline{z}-\overline{\o})(\overline{z}-\overline{\o}')} e^{ f_1'''(\lambda) (\overline{z}^3-\overline{\o}^3)/6+f_2'(\lambda)(\overline{z}-\overline{\o})}, \label{def:K} \end{equation}
	We have seen in Section \ref{localize} that
	
	$$\det(1-\mathsf{K}_n^{\epsilon}(\o,\o'))_{L^{2}(\mathcal{C}_{\epsilon}^{\epsilon})}=\det(1-\overline{\mathsf{K}}_{n}^{\epsilon}(\overline{\o},\overline{\o}'))_{L^{2}(\mathcal{C}_{-1}^{n^{1/9}\epsilon})}.$$
	
	The proof will have two main steps. In the first step we use dominated convergence to show that 
	$$\lim_{n \to \infty} \det(1-\overline{\mathsf{K}}_{n}^{\epsilon}(\overline{\o},\overline{\o}'))_{L^{2}(\mathcal{C}_{-1}^{n^{1/9}\epsilon})} = \lim_{n \to \infty} \det(1-\overline{\mathsf{K}}_{(x)}(\overline{\o},\overline{\o}'))_{L^{2}(\mathcal{C}_{-1}^{n^{1/9}\epsilon})}.$$
	In the second step we control the tail of the Fredholm determinant expansion to show that
	$$\lim_{n \to \infty} \det(1-\overline{\mathsf{K}}_{(x)}(\overline{\o},\overline{\o}'))_{L^{2}(\mathcal{C}_{-1}^{n^{1/9}\epsilon})}=\det(1-\overline{\mathsf{K}}_{(x)}(\overline{\o},\overline{\o}'))_{L^{2}(\mathcal{C}_{-1})}.$$
	In step $1$ we will use Lemma \ref{dom} to establish dominated convergence. 
	
	\item[\textbf{Step 1:}]We have the following pointwise convengences
	
	$$\frac{\lambda+n^{-1/9} \overline{z}}{\lambda + n^{-1/9} \overline{\o}} \to 1,$$
	and for $z=\lambda+n^{-1/9} \bar{z}, \o=\lambda+n^{-1/9}\overline{\o}$, 
	\begin{equation} n^{1/3}( f_1(z)-f_1(\o))+n^{1/9} (f_2(z)-f_2(\o))+r_n(z)-r_n(\o)  \rightarrow \frac{1}{6}f_1'''(\lambda)(\overline{z}^3-\overline{\o}^3)+f'_2(\lambda)(\overline{z}-\overline{\o}). \label{exponent} \end{equation} 
	Because $z$ is purely imaginary, for each $\overline{\o}, \overline{\o}'$, the exponentiating the right hand side of (\ref{exponent}) gives a bounded function of $\overline{z}$ and $z/\o \leq \frac{|\lambda+\epsilon|}{|\lambda-\epsilon|}$. The left hand side of (\ref{exponent}) can be chosen to be within $\delta/n^{1/9}$ of the right hand side by choosing $\epsilon$ small by Taylor's theorem, because all the functions on the left hand side are holomorphic in $B_{\epsilon}(\lambda)$. Thanks to the quadratic denominator $\frac{1}{(\overline{z}-\overline{\o})(\overline{z}-\overline{\o}')}$, we can apply dominated convergence to get
	
	\begin{equation}\overline{\mathsf{K}}_{n}^{\epsilon}(\overline{\o},\overline{\o}') \xrightarrow[n \to \infty]{pointwise} \frac{1}{2\pi \i} \int_{\i \R} \frac{d\overline{z}}{(\overline{z}-\overline{\o})(\overline{z}-\overline{\o}')} e^{ f_1'''(\lambda)  (\overline{z}^3-\overline{\o}^3)/6+f_2'(\lambda)(\overline{z}-\overline{\o})}. \label{conv} \end{equation}
	Because the integrand on the right hand side of (\ref{conv}) has quadratic decay in $\overline{z}$, we can deform the contour from $\mathcal{\gamma}_0$ to $D'$ without changing the integral, so the right hand side is equal to $\mathsf{K}(\overline{\o},\overline{\o}')$ from \ref{def:K}. Now by Lemma \ref{dom} we can apply dominated convergence to the expansion of the Fredholm determinant $\det(1-\overline{\mathsf{K}}_{n}^{\epsilon})_{L^2(\mathcal{C}_{-1}^{n^{1/9}\epsilon})}$, to get
	
	$$\lim_{n \to \infty}\det(1-\overline{\mathsf{K}}_{n}^{\epsilon})_{L^2(\mathcal{C}_{-1}^{n^{1/9}\epsilon})}=\lim_{n \to \infty}\det(1-\mathsf{K})_{L^2(\mathcal{C}_{-1}^{n^{1/9} \epsilon})}.$$
	\item[\textbf{Step 2:}] Now we make the change of variables $s=-(f_2'(\lambda)/x) \overline{z}$, $u=-(f_2'(\lambda)/x) \overline{\o}$, and $u'=-(f_2'(\lambda)/x) \overline{\o}'$. Keeping in mind that $-2(f_2'(\lambda)/x)^3=f_1'''(\lambda)$, we get
	
	$$\mathsf{K}(\overline{\o},\overline{\o}')=\mathsf{K}_{(x)}(u,u')=\frac{1}{2\pi \i}\int_{D'} \frac{e^{s^3/3-xs}}{e^{u^3/3-xu}} \frac{ds}{(s-u)(s-u')}.$$
	Recall the expansion:
	$$\det(1-\mathsf{K}_{(x)})_{L^2(\mathcal{C}_{-1}^{\epsilon})}=\sum_{m=0}^{\infty} \frac{(-1)^m}{m!} \int_{\mathcal{C}_{-1}^m} \det(\mathsf{K}_{(x)}(\o_i,\o_j))_{i,j=1}^m d\o_1...d\o_m,$$
	
	where $\mathcal{C}_{-1}=(e^{-2\pi \i/3} \infty, 1] \cup (1, e^{2\pi \i/3} \infty)$, and $\mathcal{C}_{-1}^m$ is a product of $m$ copies of $\mathcal{C}_{-1}.$
	
	\begin{multline*}
	| \det(1-\mathsf{K}_{(x)})_{L^2(\mathcal{C}_{-1})}-\det(1-\mathsf{K}_{(x)})_{L^2(\mathcal{C}_{-1}^{\epsilon})} | \leq \\  \sum_{m=0}^{\infty} \frac{(-1)^m}{m!} \int_{\mathcal{C}_{-1}^m \setminus (\mathcal{C}_{-1}^{n^{1/9} \epsilon })^m} |\det (\mathsf{K}_{(x)}(\o_i,\o_j))_{i,j=1}^m| d\o_1...d\o_m,
	\end{multline*}
	
	so to conclude the proof of the proposition, we are left with showing that
	
	\begin{equation} \sum_{m=0}^{\infty} \frac{1}{m!} \int_{\mathcal{C}_{-1}^m \setminus (\mathcal{C}_{-1}^{n^{1/9} \epsilon })^m} |\det (\mathsf{K}_{(x)}(\o_i,\o_j))_{i,j=1}^m| d\o_1...d\o_m \xrightarrow[n \to \infty]{} 0 \label{step 2} \end{equation}
	
	Note that 
	\begin{multline*}
\int_{\mathcal{C}_{-1}^m \setminus (\mathcal{C}_{-1}^{ n^{1/9}\epsilon})^m} |\det (\mathsf{K}_{(x)}(\o_i,\o_j))_{i,j=1}^m| d\o_1...d\o_m \leq \\ m \int_{\mathcal{C}_{-1} \setminus \mathcal{C}_{-1}^{n^{1/9}\epsilon}} \int_{\mathcal{C}_{-1}^{m-1}} |\det (\mathsf{K}_{(x)}(\o_i,\o_j))_{i,j=1}^m| d\o_1...d\o_m.
	\end{multline*}
	Set
	$$M_1=\int_{D'}|\overline{z}e^{ f_1'''(\lambda) \overline{z}^3/6+f_2'(\lambda)\overline{z}}|d\overline{z}<\infty.$$
	
	Then $\mathsf{K}_{(x)}(\o,\o') \leq M_1 e^{-|\o|^3-x |\omega|}$, and Hadamard's bound gives 
	$$|\det (\mathsf{K}_{(x)}(\o_i,\o_j))_{i,j=1}^m| \leq m^{m/2}M_1^m \prod_{i=1}^m |e^{-\o_i^3/3+x \omega_i}|.$$ 
	We have
	
	\begin{align}
	&\int_{\mathcal{C}_{-1} \setminus \mathcal{C}_{-1}^{ n^{1/9}\epsilon }} \int_{\mc{C}_{-1}^{m-1}} |\det (\mathsf{K}_{(x)}(\o_i,\o_j))_{i,j=1}^m| d\o_1...d\o_m\nonumber\\
	 &\hspace{2cm}\leq M_1\int_{\mathcal{C}_{-1} \setminus \mathcal{C}_{-1}^{ n^{1/9}\epsilon }} \int_{\mc{C}_{-1}^{m-1}} \prod_{i=1}^m |e^{-\o_i^3/3+x \omega_i}| d\o_1...d\o_m\nonumber \\
	&\hspace{2cm}\leq  m^{1+m/2} M_1^m M_2^{m-1} \int_{\mathcal{C}_{-1} \setminus \mathcal{C}_{-1}^{n^{1/9}\epsilon}} | e^{-\o_1^3+x \omega_1}| d\omega_1, \label{to 0}
	\end{align}
	where $M_2=\int_{\mathcal{C}_{-1}} |e^{-\o^3-x \omega}| d \omega <\infty$ because $-\o^3$ lies on the negative real axis. (\ref{to 0}) goes to zero because $n^{1/9} \epsilon \to \infty$. So
	$$\int_{\mathcal{C}_{-1} \setminus \mathcal{C}_{-1}^{ n^{1/9}\epsilon }} \int_{\mathcal{C}_{-1}^{m-1}} \left|\det (\mathsf{K}_{(x)}(\o_i,\o_j))_{i,j=1}^m\right| d\o_1...d\o_m \xrightarrow[n \to \infty]{} 0.$$
	Note also that
	
	\begin{align*} 
	\int_{\mathcal{C}_{-1}^{m}\setminus (\mathcal{C}_{-1}^{ n^{1/9}\epsilon})^m} \left|\det (\mathsf{K}_{(x)}(\o_i,\o_j))_{i,j=1}^m \right| d\o_1...d\o_m &\leq 
	\int_{\mathcal{C}_{-1}^{m}} |\det (\mathsf{K}_{(x)}(\o_i,\o_j))_{i,j=1}^m | d\o_1...d\o_m \\  &\leq m^{1+m/2} M_1 M_2^m.
	\end{align*}
	By Stirling's approximation
	$$\sum_{m=0}^{\infty} \frac{1}{m!} m^{1+m/2} M_1^m M_2^m< \infty.$$
	So by dominated convergence (\ref{step 2}) holds which concludes the proof of Proposition \ref{taylor approximation}.
\end{proof}

\subsection{Reformulation of the kernel}

Now we use the standard $\det(1+AB)=\det(1+BA)$ trick \cite[Lemma 8.6]{FreeEnergyCorwin} to identify $\det(1-\mathsf{K}_{(x)})_{L^2(\mathcal{C}_{-1})}$ with the Tracy-Widom cumulative distribution function.
\begin{lemma} For $x \in \mathbb{R}$,
	$$\det(1-\mathsf{K}_{(x)})_{L^2(\mathcal{C}_{-1})} =\det(1-\mathsf{K}_{\mathrm{Ai}})_{L^2(x,\infty)}.$$
\end{lemma}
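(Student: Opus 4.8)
The plan is to apply the $\det(1+AB)=\det(1+BA)$ identity (as in \cite[Lemma 8.6]{FreeEnergyCorwin}) to rewrite $\mathsf{K}_{(x)}$ as a product of two operators and then recognize the resulting kernel as the Airy kernel. Recall that
$$\mathsf{K}_{(x)}(u,u')=\frac{1}{2\pi\i}\int_{D'}\frac{e^{s^3/3-xs}}{e^{u^3/3-xu}}\frac{ds}{(s-u)(s-u')},$$
with $u,u'$ on $\mathcal{C}_{-1}$ and $s$ on $D'=(e^{-\pi\i/3}\infty,0)\cup[0,e^{\pi\i/3}\infty)$. The factor $\frac{1}{(s-u)(s-u')}$ does not immediately factor as a product of a function of $(s,u)$ times a function of $(s,u')$, so the first step is to introduce an auxiliary integration variable. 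Using $\frac{1}{s-u'}=\int_{0}^{\infty}e^{-\mu(s-u')}d\mu$ — valid because $\Re[s-u']>0$ for $s\in D'$ and $u'\in\mathcal{C}_{-1}$, since $D'$ lies to the right of $\mathcal{C}_{-1}$ — I would write $\mathsf{K}_{(x)}=AB$ where $B:L^2(\mathcal{C}_{-1})\to L^2(\R_{\geq0})$ has kernel $B(\mu,u')=e^{\mu u'-u'^3/3+xu'}$ and $A:L^2(\R_{\geq0})\to L^2(\mathcal{C}_{-1})$ has kernel $A(u,\mu)=\frac{1}{2\pi\i}\int_{D'}\frac{e^{s^3/3-xs-\mu s}}{e^{u^3/3-xu}}\frac{ds}{s-u}$. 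One should double-check the direction of the factorization so that $BA$ lands on $L^2(\R_{\geq0})$.

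**Next**, I would compute $BA$, whose kernel is
$$(BA)(\mu,\mu')=\frac{1}{2\pi\i}\int_{\mathcal{C}_{-1}}du\int_{D'}ds\,\frac{e^{s^3/3-xs-\mu' s}}{s-u}\,e^{\mu u}.$$
Here one can perform the $u$-integral: closing $\mathcal{C}_{-1}$ appropriately and using that the only relevant pole is at $u=s$ (one must check that $e^{\mu u}$ decays in the correct half-plane along the tails of $\mathcal{C}_{-1}$, using $\mu\geq0$ and the $2\pi/3$ opening angle), the residue gives $\int_{D'}e^{s^3/3-xs-\mu's+\mu s}\,ds$, up to a constant. Then shifting $s\mapsto s$ and recognizing a contour for the Airy function, $(BA)(\mu,\mu')=\Ai(\mu+\mu'+x)$ — more precisely, after the shift one obtains the standard integral representation $\Ai(y)=\frac{1}{2\pi\i}\int_{D'}e^{s^3/3-ys}ds$ with $y=\mu+\mu'+x$. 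Thus $BA$ is (a conjugate of) the Airy kernel restricted to $L^2(\R_{\geq0})$.

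**Finally**, I would translate $\det(1-BA)_{L^2(\R_{\geq0})}$ into $\det(1-\mathsf{K}_{\Ai})_{L^2(x,\infty)}$ by the change of variables $\mu\mapsto\mu+x$ (or rather by recognizing $\int_0^\infty \Ai(\mu+\mu'+x)\cdots$ as the Airy kernel $\mathsf{K}_{\Ai}$ shifted), matching conventions with Definition \ref{tracywidomdist}; this uses the well-known identity $\int_0^\infty\Ai(\xi+\tau)\Ai(\eta+\tau)d\tau=\mathsf{K}_{\Ai}(\xi,\eta)$ and the substitution taking $L^2(0,\infty)$ to $L^2(x,\infty)$. Combining, $\det(1-\mathsf{K}_{(x)})_{L^2(\mathcal{C}_{-1})}=\det(1-BA)_{L^2(\R_{\geq0})}=\det(1-\mathsf{K}_{\Ai})_{L^2(x,\infty)}$.

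**The main obstacle** I anticipate is not any single deep step but the careful bookkeeping: verifying the contour/half-plane conditions that justify writing $\frac{1}{s-u'}$ as a Laplace-type integral and that justify closing the $u$-contour to pick up only the pole at $u=s$ (the tails of $\mathcal{C}_{-1}$ go to infinity at angles $\pm2\pi/3$, so the decay of $e^{\pm3}$ has to be checked against $e^{s^3/3}$), together with tracking the constant factors and the precise shift so that the final Airy kernel matches the normalization in Definition \ref{tracywidomdist}. Absolute convergence of the double integral defining $BA$ — needed to apply Fubini and the $\det(1+AB)=\det(1+BA)$ identity — should follow from the cubic decay $e^{s^3/3}$ along $D'$ and $e^{-u^3/3}$ along $\mathcal{C}_{-1}$, but must be stated.
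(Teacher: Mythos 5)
Your overall strategy coincides with the paper's: write one Cauchy factor as a Laplace transform, factor $\mathsf{K}_{(x)}=AB$ with $A,B$ acting between $L^2(\mathcal{C}_{-1})$ and $L^2(\R_{\geq0})$, and apply $\det(1-AB)=\det(1-BA)$. However, the execution has a genuine error, not just bookkeeping. First, the placement of the factor $e^{-u^3/3+xu}$ is inconsistent: with your $B(\mu,u')=e^{\mu u'-u'^3/3+xu'}$ and your $A(u,\mu)$, which also carries $e^{-u^3/3+xu}$, the product $AB$ equals $\mathsf{K}_{(x)}(u,u')\,e^{-u'^3/3+xu'}$, which is neither $\mathsf{K}_{(x)}$ nor a conjugate of it; and your displayed formula for $(BA)(\mu,\mu')$ omits the cubic factor entirely, so it does not follow from your own definitions. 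The cubic factor must sit in exactly one of the two operators, as in the paper, where $A(\o,\lambda)=e^{-\o^3/3+\o(x+\lambda)}$ and $B(\lambda,\o')=\frac{1}{2\pi\i}\int\frac{e^{z^3/3-z(x+\lambda)}}{z-\o'}\,dz$.

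Second, and more seriously, the evaluation of $BA$ fails. In your displayed integrand $\frac{e^{\mu u}}{s-u}$ the only pole $u=s$ lies strictly to the right of $\mathcal{C}_{-1}$, while $e^{\mu u}$ decays only as $\Re[u]\to-\infty$; closing the contour to the left encloses no poles (so that integral is actually $0$ for $\mu>0$), and closing to the right is impossible, so no residue computation produces $\Ai(\mu+\mu'+x)$. With the correct integrand (including $e^{-u^3/3+u(x+\mu)}$) the contour still cannot be swept across the sectors where $e^{-u^3/3}$ blows up, and in any case the true value of $BA$ is not a single Airy function: it is the Airy kernel itself, $(BA)(\mu,\mu')=\mathsf{K}_{\mathrm{Ai}}(x+\mu,x+\mu')=\int_0^\infty\Ai(x+\mu+\tau)\Ai(x+\mu'+\tau)\,d\tau$, obtained either by recognizing the double contour integral of Definition \ref{tracywidomdist} directly (as the paper does) or by applying the Laplace representation once more to the remaining factor $\frac{1}{s-u}$ and identifying two Airy integrals. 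The kernel $\Ai(\mu+\mu'+x)$ you arrive at is the Hankel ``square root'' of the Airy kernel; its Fredholm determinant on $L^2(\R_{\geq0})$ is the GOE Tracy--Widom distribution, not $F_{\mathrm{GUE}}(x)$, and the identity $\int_0^\infty\Ai(\xi+\tau)\Ai(\eta+\tau)\,d\tau=\mathsf{K}_{\mathrm{Ai}}(\xi,\eta)$ that you invoke at the end applies to a different kernel than the one you derived, so it cannot repair the step. Once $BA$ is computed correctly, the final identification of $\det(1-BA)_{L^2(\R_{\geq0})}$ with $\det(1-\mathsf{K}_{\mathrm{Ai}})_{L^2(x,\infty)}$ goes through as you describe.
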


\begin{proof}
	First note that because $\Re[z-\o] > 0$ along the contours we have chosen, we can write 
	$$\frac{1}{z-\o}=\int_{\R_+} e^{-\lambda(z-\o)} d\lambda.$$
	Now let $A:L^2(\mathcal{C}_{-1}) \to L^2(\R_+)$, and $B:L^2(\R_+) \to L^2(\mathcal{C}_{-1})$ be defined by the kernels
	\begin{align}
	A(\omega,\lambda)&=e^{-\o^3/3+\o(x+\lambda)},\\
	B(\lambda,\omega')&=\int_{e^{-\pi \i/3} \infty}^{e^{\pi \i/3} \infty} \frac{dz}{2\pi \i} \frac{e^{z^3/3-z(x+\lambda)}}{z-\o'}.
	\end{align}
	We compute
	\begin{align*}
	AB(\o,\o') &=\int_{\R_+} e^{-\o^3/3+\o(x+\lambda)} \int_{e^{-\pi \i/3}\infty}^{e^{\pi \i/3} \infty} \frac{dz}{2 \pi \i} \frac{e^{z^3/3-z(x+\lambda)}}{z-\o'}\\ 
	&= \frac{1}{2\pi \i} \int_{e^{-\pi \i/3} \infty}^{e^{\pi \i/3} \infty} \frac{e^{z^3/3-zx}}{e^{\o^3/3-\o x}} \frac{dz}{(z-\o)(z-\o')}\\
	&=\mathsf{K}_{(x)}(\o,\o').
	\end{align*}
	%
	%
	Similarly,
	$$BA(s,s')=\frac{1}{2\pi \i} \int_{e^{-2 \pi \i/3} \infty}^{e^{2 \pi \i/3} \infty} d\o \frac{1}{2\pi \i} \int_{e^{-\pi \i/3} \infty}^{e^{\pi \i/3} \infty} dz \frac{e^{z^3/3-z(x+s)}}{e^{\o^3/3-\o(x+s')}} \frac{1}{(z-\o)}
	=\mathsf{K}_{\mrm{Ai}}(x+s,x+s').$$
	Because both $A$ and $B$ are Hilbert-Schmidt operators, we have
\begin{multline*}
\det(1-\mathsf{K}_{(x)})_{L^2(\mathcal{C})}=\det(1-AB)_{L^2(\R_+)} = \det(1-BA)_{L^2(\R_+)}\\ =\det(1-\mathsf{K}_{\mrm{Ai}})_{L^2(x,\infty)}=F_{\textrm{GUE}}(x).
\end{multline*}
\end{proof}

\section{Constructing the contour $\mathcal{C}_n$}
\label{Cn}

This section is devoted to constructing the contours $\mathcal{C}_n$ and proving Lemma \ref{C bound}. We will prove several estimates for $n^{1/3}\o+h_n(\o)$; then we will construct the contour $\mc{C}_n$, and prove it satisfies the properties of Lemma \ref{C bound}. We begin by proving that we can approximate $n^{1/3}\o+h_n(\o)$ by $n^{1/3}f_1(\o)$ away from $0$.

\subsection{Estimates away from 0: proof of Lemma \ref{chaos control}}


Both inequalities for $|f_2|=\frac{b \sigma x}{\o}$ follow from the fact that $f_2$ and $f_2'$ are bounded on $\C \setminus B_{\epsilon}(0)$. Let $y=1/\o$, and let $m=n^{-1/9}$. Define the function $g(y,m)=r_n(\o).$ First we prove (\ref{control}). Note that $h_n(\o)$ is holomorphic in $y$ and $m$ except when $n =\infty$, $n^{1/3} \omega=0,-a-b$. By Taylor expanding $h_n(\o)$, we see that $r_n(\o)=g(y,m)$ is holomorphic in $y$ and $m$, except at points $(y,m)$ such that $n^{1/3} \omega=0,-a-b$, in particular there is no longer a pole when $n=\infty$. Thus for any $N$, $g(y,m)$ is holomorphic with variables $y$ and $m$, in the region $U=\{(y,m):n>N, \o> |a+b|/N^{1/3}\}$, because in this region $n^{1/3} \omega>|a+b|$. 
The region $U_{\epsilon}=\{(y,m): n>N, \o \geq \frac{|a+b|+\epsilon}{N^{1/3}} \}$ is compact in the variables $y$ and $m$, and because $U_{\epsilon} \subset U$, the function $g(y,m)$ is holomorphic in the region $U_{\epsilon}$. Thus $g(y,m)=r_n(\o)$ is bounded by a constant $C$ in the region $U_{\epsilon}$.

Now we prove (\ref{derivative control}). For any $\delta$, pick an arbitrary $\epsilon$ and an $N_{\delta}$ large enough that $\frac{|a+b|+\epsilon}{N_{\delta}^{1/3}} \leq \delta$. Because $g(y,m)=r_n(\o)$ is holomorphic in the variables $y$ and $m$ in the compact set $U_{\epsilon}$, the function $\frac{\partial}{\partial y} g(y,m)=-\o^2 r_n'(\o)$, is also holomorphic in $y,m$. So $|\o^2 r_n'(\o)| \leq C$ on $U_{\epsilon}$. We rewrite as $|r_n'(\o)| \leq C/|\o|^2$, and this gives $|r_n'(\o)| \leq \frac{C}{|\delta|^2} \leq C',$ on the set  $U_{\epsilon} \cap (\mathbb{N} \times B_{\delta}(0)^c)$. But by our choice of $N_{\delta}$, we have $U_{\epsilon} \cap (\mathbb{N} \times B_{\delta}(0)^c)$ is just the set $\{(y,m): n \geq N_{\delta}, |\o| \geq \delta \}$. 


\subsection{Estimates near 0}

The function $n^{1/3}f_1(\o)$ only approximates $-n^{1/3}t \o -h_n(\o)$ well away from $0$.
In this section we give two estimates for $-n^{1/3}t \o -h_n(\o)$: one in Lemma \ref{one third bound} when $\o$ is of order $n^{-1/3}$ and one in Lemma \ref{delta bound} when $\o$ is of order $n^{\delta-1/3}$ for $\delta \in (0,1/3).$ Together with Lemma \ref{chaos control} which gives an estimate when $\o$ is of order $1$, this will give us the tools we need to control $-n^{1/3}t \o -h_n(\o)$ along $\mathcal{C}_n$. First to prove the bound in Lemma \ref{one third bound}, we choose a path which crosses the real axis at $-a$, between the poles at $0$ and $-a-b$ before rescaling $\tilde{h}_n$ to $h_n$. We show that after the rescaling, we can bound $\Re[-n^{-1/3} \o-h_n(\o)]$ on this path for small $\o$.

\begin{lemma} 
	\label{one third bound} Fix any $c_0>1$ and let $s=c_0(a+b)$. For $C=\log\left(\sqrt{s^2+a^2}\right)-\log(s)>0$, we have
	$$\limsup_{n \to \infty} \frac{1}{n} \sup_{y \in [-s,s]} \Re[h_n(\lambda)-h_n(\mbf{i}n^{-1/3}y-n^{-1/3}a)]< -C.$$
\end{lemma}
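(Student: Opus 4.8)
\textbf{Proof proposal for Lemma \ref{one third bound}.}
The plan is to evaluate $h_n$ \emph{exactly} along the affine line $y\mapsto \mbf i n^{-1/3}y-n^{-1/3}a$ and read off the dominant behaviour in $n$. Write $\omega_y=\mbf i n^{-1/3}y-n^{-1/3}a$, so that $n^{1/3}\omega_y=\mbf i y-a$; the point of the shift by $a$ is that in the three logarithmic arguments occurring in the definition of $h_n$ one gets the clean expressions $a+n^{1/3}\omega_y=\mbf i y$, $n^{1/3}\omega_y=\mbf i y-a$ and $a+b+n^{1/3}\omega_y=\mbf i y+b$, and the crossing of the real axis happens at $-a$, strictly between the poles of $\tilde h_n$ at $0$ and $-a-b$. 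Substituting into the definition of $h_n$ gives
$$h_n(\omega_y) = -n\log\frac{\mbf i y}{\mbf i y-a}-m\log\frac{\mbf i y}{\mbf i y+b},$$
and since $\Re\log w=\log|w|$,
$$\Re[h_n(\omega_y)] = n\,\log\frac{\sqrt{y^2+a^2}}{|y|}+m\,\log\frac{\sqrt{y^2+b^2}}{|y|}\;=:\;n\,g_a(y)+m\,g_b(y),$$
where $g_a,g_b\geq 0$. (At $y=0$ the path meets the zero of $a+n^{1/3}\omega$, so $\Re[h_n(\omega_y)]=+\infty$ there, which is harmless since we only need an upper bound on $\Re[h_n(\lambda)-h_n(\omega_y)]$.)

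The one structural input I would use is monotonicity: for any $c>0$ the function $y\mapsto g_c(y)=\tfrac12\log(1+c^2/y^2)$ is strictly decreasing in $|y|$, hence on $[-s,s]$ it is bounded below by its value at $|y|=s$. This yields, uniformly in $y\in[-s,s]$, $g_a(y)\geq \log\frac{\sqrt{s^2+a^2}}{s}=C$ and $g_b(y)\geq \log\frac{\sqrt{s^2+b^2}}{s}=:C_b>0$ (here $C>0$ because $a>0$, and $s=c_0(a+b)$ is fixed). Since $\lambda>0$ is real, $h_n(\lambda)\in\R$, and therefore
$$\Re[h_n(\lambda)-h_n(\omega_y)] = h_n(\lambda)-\Re[h_n(\omega_y)] \leq h_n(\lambda)-nC-mC_b,$$
a bound uniform in $y$, which is exactly what lets one exchange $\sup_{y}$ and $\limsup_n$.

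It then remains to track the two prefactors. From \eqref{m} one has $m/n\to a/b$, and since $n^{1/3}\lambda\to\infty$ each of the two logarithms defining $h_n(\lambda)$ is $O(n^{-1/3})$ while its prefactor is $O(n)$, so $h_n(\lambda)=O(n^{2/3})=o(n)$ (alternatively, invoke the decomposition $h_n(z)=n^{1/3}(f_1(z)-tz)+n^{1/9}f_2(z)+r_n(z)$ together with $|r_n(\lambda)|\leq C$ from Lemma \ref{chaos control}). Dividing the displayed bound by $n$, taking the supremum over $y$, and letting $n\to\infty$ gives
$$\limsup_{n\to\infty}\frac1n\sup_{y\in[-s,s]}\Re[h_n(\lambda)-h_n(\omega_y)] \;\leq\; \lim_{n\to\infty}\Big(\tfrac1n h_n(\lambda)-C-\tfrac{m}{n}C_b\Big)=-C-\tfrac{a}{b}\,C_b<-C,$$
since $C_b>0$; this is the claim, with room to spare. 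I do not expect any serious obstacle: the computation is essentially a substitution plus the elementary monotonicity of $g_c$. The only points demanding care are bookkeeping the signs (the two $\log$-terms of $\Re[h_n(\omega_y)]$ are positive, so they \emph{help} the inequality once negated) and noting that the apparent singularity at $y=0$ is benign.
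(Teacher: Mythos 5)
Your proof is correct and follows essentially the same route as the paper: substitute the affine path so that $n^{1/3}\omega_y=\mbf{i}y-a$, take real parts (the paper works with moduli of the exponentiated factors, you with $\Re\log=\log|\cdot|$), bound uniformly on $[-s,s]$ by monotonicity in $|y|$, and absorb the $h_n(\lambda)$ contribution, which is $o(n)$. If anything, your bookkeeping of the second term $m\,g_b(y)\geq mC_b$ makes the strict inequality $<-C$ more explicit than the paper's remark that the "second times fourth factor is less than $1$".
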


\begin{proof}
	Let $y \in [-s,s]$ and expand $e^{\Re[h_n(\lambda)-h_n(iy-a n^{-1/3})]}$ to get
	
	$$\left(\frac{y}{\sqrt{y^2+a^2}}\right)^n \left(\frac{y}{\sqrt{y^2+b^2}} \right)^m \left(\frac{n^{1/3} \lambda}{n^{1/3} \lambda+a} \right)^n \left(\frac{a+b+n^{1/3} \lambda}{n^{1/3} \lambda+a} \right)^m.$$
	The third factor is always less than $1$. For sufficiently large $n$, the second factor times the fourth factor is less than $1$, because $|y| \leq |s|$ while $n^{1/3} \lambda \to \infty$. We can bound the first factor by
	$$\left|\frac{y}{\sqrt{y^2+a^2}}\right|^n \leq \left(\frac{s}{\sqrt{s^2+a^2}}\right)^n = e^{-nC},$$
	with $C=\log\left(\sqrt{(s^2+a^2)}\right)-\log(s)$.
\end{proof}

Next we will prove the estimate for $\o$ of order $n^{\delta-1/3}$. In this proof we will consider $\o$ of the form $\o=-n^{-1/3}a+\i n^{\delta-1/3}c(a+b)$, choose $c$ sufficiently large, then let $n \to \infty$. The largest term in the expansion of $-n^{-1/3}\o-h_n(\o)$ will be of order $\frac{n^{1-2\delta}}{c^2}$. We introduce the following definition to let us ignore the terms which are negligible compared to $\frac{n^{1-2\delta}}{c^2}$ uniformly in $\delta.$

\begin{definition} Let $A$ and $B$ be functions depending on $n$ and $c$, we say $A \sim_{\delta} B$ or $A$ is $\delta$-equivalent to $B$, if for sufficiently large $c$ and $n$,
	$$|A-B| \leq \frac{n^{2/3-2\delta}}{c^2}M_1+\frac{n^{1-3\delta}}{c^3}M_2+\frac{n^{4/9-\delta}}{c}M_3.$$ for some constants $M_1,M_2,M_3$ independent of $c$ and $n$. 
\end{definition}

Now we prove the estimate. 

\begin{lemma}
	For all $\delta \in (0,1/3)$, setting $\o=-n^{-1/3}a+\i n^{\delta-1/3}c(a+b)$, gives 
	$$\Re[n^{1/3}t \o+h_n(\o)] \sim_{\delta} \Re[n^{1/3}f_1(\o)] \sim_{\delta} M\frac{n^{1-2\delta}}{c^2},$$
	where $\sim_{\delta}$ is defined in Definition 8.
	\label{delta bound} 
\end{lemma}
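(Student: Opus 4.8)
The plan is to compute $\Re\big[n^{1/3}t\o+h_n(\o)\big]$ nearly explicitly, read off the leading term, and check that every discarded contribution matches one of the three templates in Definition~8; the comparison with $\Re\big[n^{1/3}f_1(\o)\big]$ is then a short addendum. Write $\mu=n^{1/3}\o=-a+\i n^{\delta}c(a+b)$, so that $a+\mu=\i n^{\delta}c(a+b)$ and $a+b+\mu=b+\i n^{\delta}c(a+b)$. Using $h_n(z)=-(n+m)\log\!\big(1+\tfrac{a}{n^{1/3}z}\big)+m\log\!\big(1+\tfrac{a+b}{n^{1/3}z}\big)$ together with $\Re[\log w]=\log|w|$, a direct simplification gives
\begin{equation*}
\Re[h_n(\o)]=\frac n2\,\log\!\Big(1+\tfrac{a^2}{n^{2\delta}c^2(a+b)^2}\Big)+\frac m2\,\log\!\Big(1+\tfrac{b^2}{n^{2\delta}c^2(a+b)^2}\Big),
\end{equation*}
while $\Re[n^{1/3}t\o]=-ta$ is bounded and hence $\delta$-equivalent to $0$.

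Since $\delta>0$, the arguments of both logarithms tend to $0$, so I would expand $\log(1+x)=x+O(x^2)$ to obtain
\begin{equation*}
\Re[h_n(\o)]=\frac{na^2+mb^2}{2n^{2\delta}c^2(a+b)^2}+O\!\Big(\tfrac{n}{n^{4\delta}c^4}\Big).
\end{equation*}
Inserting $m=\tfrac ab n+d\,n^{2/3}+\sigma x\,n^{4/9}+O(1)$ from \eqref{m} gives $na^2+mb^2=na(a+b)+b^2 d\,n^{2/3}+O(n^{4/9})$, so the first term on the right equals $\tfrac{a}{2(a+b)}\,\tfrac{n^{1-2\delta}}{c^2}$ plus a piece of size $O(n^{2/3-2\delta}/c^2)+O(n^{4/9-2\delta}/c^2)$, while the log-expansion remainder $O(n^{1-4\delta}/c^4)$ equals $\tfrac{n^{1-3\delta}}{c^3}\cdot\tfrac{1}{n^{\delta}c}$, which for large $n,c$ is bounded by the second template. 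All of these are admissible in the sense of Definition~8, so $\Re\big[n^{1/3}t\o+h_n(\o)\big]\sim_\delta M\,\tfrac{n^{1-2\delta}}{c^2}$ with $M=\tfrac{a}{2(a+b)}>0$.

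For the middle relation $\Re\big[n^{1/3}t\o+h_n(\o)\big]\sim_\delta\Re\big[n^{1/3}f_1(\o)\big]$ I would use the decomposition $n^{1/3}t\o+h_n(\o)-n^{1/3}f_1(\o)=n^{1/9}f_2(\o)+r_n(\o)$ recorded before Lemma~\ref{chaos control}. Here $|n^{1/9}f_2(\o)|=n^{1/9}\,b\sigma|x|/|\o|\asymp n^{4/9-\delta}/c$, which is precisely the third template, so $n^{1/9}f_2(\o)\sim_\delta 0$. For $r_n$, the same Laurent expansion in $1/z$ that isolates the $f_1$ and $f_2$ terms shows $r_n(z)=\sum_{k\geq1}c_{k,n}z^{-k}$ with $c_{1,n}=O(n^{-1/3})$, $c_{2,n},c_{3,n}=O(1)$ and $c_{k,n}=O(n^{1-k/3})$ for $k\geq4$; since $|\o|\asymp n^{\delta-1/3}c$, the terms have sizes $O(n^{-\delta}/c)$, $O(n^{2/3-2\delta}/c^2)$, $O(n^{1-3\delta}/c^3)$ and a geometrically summable tail dominated by $O(n^{1-4\delta}/c^4)$, so $r_n(\o)\sim_\delta 0$ as well. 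Combined with the previous paragraph (the three $\sim_\delta$ relations being linked by the triangle inequality, and a sum of admissible bounds being admissible) this proves the lemma. As a sanity check one can also evaluate $\Re[n^{1/3}f_1(\o)]$ directly from $f_1(z)=tz-\tfrac{a(a+b)}{2z^2}+\tfrac{bd}{z}$, using $\Re[1/\o^2]=-n^{2/3-2\delta}/(c^2(a+b)^2)+\cdots$ and $\Re[1/\o]=-a\,n^{1/3-2\delta}/(c^2(a+b)^2)+\cdots$, which again produces $\tfrac{a}{2(a+b)}\tfrac{n^{1-2\delta}}{c^2}$ up to admissible errors.

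The only real difficulty is the bookkeeping: one must keep track that it is the \emph{real} part of $h_n(\o)$ and of $1/\o^{k}$ that is small — the point $\o$ has a tiny real part $-n^{-1/3}a$ against a comparatively large imaginary part $n^{\delta-1/3}c(a+b)$, and it is exactly the cancellation in $\Re[\log]=\log|\cdot|$ that makes $\Re[h_n(\o)]$ small and positive — and one must verify this uniformly in $\delta\in(0,1/3)$. The $z^{-3}$ contribution to $r_n$ lands at size $n^{1-3\delta}/c^3$, right on the boundary of what is allowed, which is precisely why Definition~8 is phrased with those three specific terms.
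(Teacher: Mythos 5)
Your proof is correct, and for the outer relation it takes a genuinely different (and cleaner) route than the paper. The paper works with the full complex quantity: it expands $-n\log\bigl(1+\tfrac{a}{n^{1/3}\omega}\bigr)+m\log\bigl(1+\tfrac{b}{a+n^{1/3}\omega}\bigr)$ term by term, discards $\delta$-negligible pieces, exhibits the cancellation of the $n^{2/3}/\omega$ terms to get $h_n(\omega)\sim_\delta n^{1/3}\bigl(f_1(\omega)-t\omega\bigr)$, and only then estimates $\Re[n^{1/3}f_1(\omega)]$. You instead exploit that $\Re[n^{1/3}\omega]=-a$ exactly on this segment, so $\Re[\log]=\log|\cdot|$ collapses $\Re[h_n(\omega)]$ to the closed form $\tfrac n2\log\bigl(1+\tfrac{a^2}{n^{2\delta}c^2(a+b)^2}\bigr)+\tfrac m2\log\bigl(1+\tfrac{b^2}{n^{2\delta}c^2(a+b)^2}\bigr)$ --- the same device the paper uses in Lemma \ref{one third bound} --- from which the leading term and the admissibility of all remainders drop out at once, with the explicit positive constant $M=\tfrac{a}{2(a+b)}$ (this also clarifies the slightly garbled constant and power of $c$ in the paper's own estimate of $\Re[n^{1/3}f_1(\omega)]$). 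Your middle relation, via $n^{1/3}tz+h_n(z)=n^{1/3}f_1(z)+n^{1/9}f_2(z)+r_n(z)$ and the Laurent-coefficient bounds $c_{1,n}=O(n^{-1/3})$, $c_{2,n},c_{3,n}=O(1)$, $c_{k,n}=O(n^{1-k/3})$, is essentially the paper's expansion reorganized; I checked the coefficients and the template matching ($n^{-\delta}/c$, $n^{2/3-2\delta}/c^2$, $n^{1-3\delta}/c^3$, tail $n^{1-4\delta}/c^4$), and they are right provided $c$ is large enough to make the tail geometric, which Definition 8 allows. Two small caveats, both shared with the paper's own argument: absorbing bounded quantities such as $\Re[n^{1/3}t\omega]=-ta$ (and your $O(n^{4/9-2\delta}/c^2)$ piece) into the third template $\tfrac{n^{4/9-\delta}}{c}M_3$ uses $4/9-\delta>1/9$ and requires reading ``sufficiently large $c$ and $n$'' as $n$ large depending on $c$, which is exactly how the estimate is consumed in part (iv) of the proof of Lemma \ref{C bound}; and your $O(n^{1-k/3})$ for $k\geq 4$ hides a geometric factor $C^k$, which your ``geometrically summable tail'' remark correctly accounts for once $c$ exceeds a constant multiple of $C$.
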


The proof of this Lemma \ref{delta bound} comes from Taylor expanding $h_n$ and keeping track of the order of different terms with respect to $n$ and $c$.
\begin{proof} Recall that
	\begin{equation}h_n(\o)=-n \log \left(1+\frac{a}{n^{1/3} \o}\right)+m \log\left(1+\frac{b}{a+n^{1/3}\o}\right). \label{hn} \end{equation}
	
	For $|n^{1/3} \o|>a$ and $|a+n^{1/3}\o|>b$, we can Taylor expand in $n^{1/3} \o$ to get 
	
	$$h_n(\o)=-n \sum_{k=1}^{\infty} \frac{(-1)^{k+1}}{k}\left(\frac{a}{n^{1/3} \o}\right)^k+m \sum_{k=1}^{\infty} \frac{(-1)^{k+1}}{k}\left(\frac{b}{a+n^{1/3}\o}\right)^k.$$
	
	Let $\o=-n^{-1/3}a+\i n^{\delta-1/3}c(a+b)$ for $\delta \in (0,1/3)$, so $ |n^{1/3} \o|, |a+n^{1/3}\o| > n^{\delta}c(a+b)>c(a+b)$, for a constant $c$ to be determined later. If $c>2$, we have
	
	\begin{equation}\sum_{k=1}^{\infty} \left|\left(\frac{a}{n^{1/3} \o}\right)\right|^k \leq \sum_{k=1}^{\infty} \left(\frac{b}{n^{\delta}c(a+b)}\right)^k \leq \frac{a}{n^{\delta}c(a+b)}\sum_{k=0}^{\infty}\left(\frac{1}{2}\right)^k \leq \frac{2a}{n^{\delta}c(a+b)}=\frac{n^{-\delta}}{c}M, \label{sum 1}\end{equation}
	
	and
	
	\begin{equation} \sum_{k=1}^{\infty} \left|\left(\frac{b}{a+n^{1/3}\o}\right)\right|^k \leq \sum_{k=1}^{\infty} \left(\frac{a}{n^{\delta}c(a+b)}\right)^k \leq \frac{a}{n^{\delta}c(a+b)}\sum_{k=0}^{\infty} \left(\frac{1}{2}\right)^k=\frac{2a}{n^{\delta}c(a+b)}=\frac{n^{-\delta}}{c} M. \label{sum 2} \end{equation}
	
	In what follows, we will use (\ref{sum 1}) or (\ref{sum 2}) when we say that an infinite sum is $\delta$-equivalent to its first term.
	
	We examine the first term in (\ref{hn}).
	\begin{align*}
	-n\sum_{k=1}^{\infty} \frac{(-1)^{k+1}}{k}\left(\frac{a}{n^{1/3} \o}\right)^k &= -\left(\frac{a}{n^{1/3} \o}\right)+\frac{1}{2}\left(\frac{a}{n^{1/3} \o}\right)^2-n\sum_{k=3}^{\infty} \frac{(-1)^{k+1}}{k}\left(\frac{a}{n^{1/3} \o}\right)^k,\\
	&\sim_{\delta} -\left(\frac{a}{n^{1/3} \o}\right)+\frac{1}{2} \left(\frac{a}{n^{1/3} \o}\right)^2.
	\end{align*}
	where the $\delta-$equivalence follows because $\left|n\sum_{k=3}^{\infty} \frac{(-1)^{k+1}}{k} \left(\frac{a}{n^{1/3} \o}\right)^k\right|  \leq \frac{n^{1-3\delta}}{c^3}M$ for some $M$ by (\ref{sum 1}). 
	
	Recall that 
	$$m \sum_{k=1}^{\infty} \left(\frac{b}{a+n^{1/3}\o}\right)^k=\left[\left(\frac{a}{b}\right)n+dn^{2/3}+\sigma x n^{4/9}\right]\sum_{k=1}^{\infty}\left(\frac{b}{a+n^{1/3}\o}\right)^k.$$
	We decompose this series as three sums. First the $\left(\frac{a}{b}\right)n$ term gives
	\begin{multline*}
	\frac{a}{b} n \sum_{k=1}^{\infty} \frac{(-1)^{k+1}}{k}\left(\frac{b}{a +n^{1/3}\o}\right)^k= \\  n\left(\frac{a}{b}\right) \left(\frac{b}{a +n^{1/3}\o}\right)-\frac{n}{2}\left(\frac{a}{b}\right)\left(\frac{b}{a +n^{1/3}\o}\right)^2 +\frac{a}{b} n \sum_{k=3}^{\infty} \frac{(-1)^{k+1}}{k}\left(\frac{b}{a +n^{1/3}\o}\right)^k 
	\\ \sim_{\delta} n\left(\frac{a}{b}\right)\left(\frac{b}{a +n^{1/3}\o}\right)-\frac{n}{2}\left(\frac{b}{a +n^{1/3}\o}\right)^2,
	\end{multline*}
	because $\left|-\frac{a}{b} n \sum_{k=1}^{\infty} \frac{(-1)^{k+1}}{k}\left(\frac{b}{a +n^{1/3}\o}\right)^k\right| \leq  Mn^{1-3\delta}/c^3$ for some $M$.  The second term is 
	\begin{align*}
	dn^{2/3} \sum_{k=1}^{\infty} \frac{(-1)^{k+1}}{k}\left(\frac{b}{a+n^{1/3} \o}\right)^k &= dn^{2/3}\left(\frac{b}{a+n^{1/3} \o}\right)-dn^{2/3} \sum_{k=2}^{\infty} \frac{(-1)^{k+1}}{k}\left(\frac{b}{a+n^{1/3} \o}\right)^k\\
	&\sim_{\delta} dn^{2/3}\left(\frac{b}{a+n^{1/3} \o}\right)
	\end{align*}
	because $\left|dn^{2/3} \sum_{k=2}^{\infty} \frac{(-1)^{k+1}}{k}\left(\frac{b}{a+n^{1/3} \o}\right)^k\right| \leq  Mn^{2/3-2\delta}/c^2$ for some $M$.  The third term is 
	$$n^{4/9} \sigma x \sum_{k=1}^{\infty} \frac{(-1)^{k+1}}{k}\left(\frac{b}{a+n^{1/3} \o}\right)^k \sim_{\delta} 0,$$
	because the full sum $\left|n^{4/9} \sigma x \sum_{k=1}^{\infty} \frac{(-1)^{k+1}}{k}\left(\frac{b}{a+n^{1/3} \o}\right)^k\right| \leq \frac{Mn^{4/9-\delta}}{c}$ for some $M$. 
	Now we have shown 
	\begin{equation}
	-n \log\left(1+\frac{a}{n^{1/3} \o}\right) \sim_{\delta} -n^{2/3} \frac{a}{\o}+n^{1/3} \frac{a^2}{2\o^2}, \label{term 1}
	\end{equation}
	\begin{multline}
	m \log\left(1+\frac{b}{a+n^{1/3}\o}\right) \sim_{\delta}   \\ n \left(\frac{a}{b}\right) \left(\frac{b}{a+n^{1/3} \o}\right)-n \left(\frac{a}{2b}\right) \left(\frac{b}{a+n^{1/3}\o}\right)^2+dn^{2/3} \left(\frac{b}{a+n^{1/3} \o}\right). \label{term 3}
	\end{multline}
	Adding (\ref{term 1}) and (\ref{term 3}) together yields
	\begin{multline}h_n(\o) \sim_{\delta}  -n^{2/3} \frac{a}{\o}+n^{1/3} \frac{a^2}{2\o^2}+n \left(\frac{a}{b}\right) \left(\frac{b}{a+n^{1/3} \o}\right) \\ 
	-n \left(\frac{a}{2b}\right) \left(\frac{b}{a+n^{1/3}\o}\right)^2+dn^{2/3} \left(\frac{b}{a+n^{1/3} \o}\right). \label{all terms} \end{multline}
	Adding the first and third terms from (\ref{all terms}) gives the following cancellation.
	\begin{multline*}
	-n^{2/3} \frac{a}{\o}+n \left(\frac{a}{b}\right) \left(\frac{b}{a+n^{1/3} \o}\right)= \\ 
	-n^{2/3} \frac{a}{\o}+n^{2/3} \frac{a}{\o}\left[1-\frac{a}{n^{1/3} \o}+\sum_{k=2}^{\infty}(-1)^k\left(\frac{a}{n^{1/3} \o }\right)^k\right] \sim_{\delta} -n^{1/3}\frac{ a^2}{\o^2},
	\end{multline*}
	thus
	$$h_n(\o) \sim_{\delta} -n^{1/3}\left(\frac{ a^2}{2\o^2} \right)-n \left(\frac{a}{2b}\right) \left(\frac{b}{a+n^{1/3}\o}\right)^2+dn^{2/3} \left(\frac{b}{a+n^{1/3} \o}\right).$$
	When we expand $\frac{b}{a+n^{1/3} \o}=\frac{b}{n^{1/3} \o}+\left(\frac{b}{n^{1/3} \o}\right)\sum_{k=1}^{\infty}\left(\frac{-a}{n^{1/3} \o}\right)^k,$ we see that because $n^{1/3} \o \sim_{\delta} n^{\delta}\i c(a+b)$, the sum is of order $1/c$ times the first term. So we can take only the first terms in our expansion, just as when we Taylor expand. This approximation leads the $n^{2/3}$ terms to cancel giving
	$$ h_n(\o) \sim_{\delta} -n^{1/3} \left(\frac{a^2+ab}{2\o^2}\right) +dn^{1/3} \left(\frac{b}{ \o}\right) \sim_{\delta} n^{1/3}\left(f_1(\o)-t\o\right).
	$$
	
	This implies that $\Re[n^{1/3}t \o +h_n(\o)] \sim_{\delta} \Re[n^{1/3}f_1(\o)].$ Completing the first $\delta$-equivalence in the statement of Lemma \ref{delta bound}.
	
	Now observe that in
	$$\Re[n^{1/3}f_1(\o)]=\Re\left[n^{1/3}\left(t\o -\frac{a(a+b)}{2\o^2}+\frac{bd}{\o}\right)\right],$$
	we can bound the first term $|\Re[n^{1/3}t\o]| \leq n^{\delta}M$. We can bound the third term by $\Re \left[n^{1/3}\frac{bd}{\o}\right] \leq M\frac{n^{2/3-\delta}}{c}$. For the second term, we have $\left|\frac{a(a+b)}{2\o^2}\right|\sim_{\delta} \left(\frac{a(a+b)}{2}\right)\left(\frac{n^{1-2\delta}}{c}\right).$ Thus 
	$$\Re[n^{1/3}f_1(\o)] \sim_{\delta} \left(\frac{a(a+b)}{2}\right)\left(\frac{n^{1-2\delta}}{c}\right).$$
	This gives the second $\delta$-equivalence in the statement of Lemma \ref{delta bound}, and completes the proof. 
\end{proof}

\subsection{Construction of the contour $\mathcal{C}_n$} 

To construct the contour $\mathcal{C}_n$ we will start with lines departing from $\lambda$ at angles $e^{\pm 2 \pi \i/3}$, and with a vertical line $-n^{1/3}a+\i \mathbb{R}$. We will cut both these infinite contours off at specific values $q$ and $p$ respectively which allow us to use our estimates from the previous section on these contours. We will then connect these contours using the level set $\{z: Re[-f_1(z)]=-f_1(\lambda)-\epsilon\}$. The rest of this section is devoted to finding the values $p$ and $q$, showing that our explanation above actually produces a contour, and controlling the derivative of $f_1$ on the vertical segment near $0$. 

We note 
\begin{equation}
f_1(\lambda)=3t^{2/3} \left(\frac{a(a+b)}{2}\right)^{1/3}>0, \label{positive}
\end{equation}
and let
\begin{equation} p=\sqrt{\frac{1}{3}\left(\frac{a(a+b)}{2t}\right)^{2/3}}>0. \label{p definition}
\end{equation}

\noindent By simple algebra, we see that $\Re[-f_1(\pm \i y)]< \Re[-f_1(\lambda)]<0$, when $y < p$, with equality at $y=p$. 

\begin{lemma}\label{derivative}
	$\frac{d}{dy} \Re[-f_1(n^{-1/3}a+\i y)]$ is positive for $y \in [n^{-1/3}|a+b|, p]$, and negative for $y \in [-n^{-1/3}|a+b|, -p].$
\end{lemma}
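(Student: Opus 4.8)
The plan is to differentiate the single‑variable function $\psi(y):=\Re[-f_1(n^{-1/3}a+\i y)]$ once and read off the sign of $\psi'$ from an explicit rational expression. Set $c=n^{-1/3}a$. The first step is the same manipulation used in the proof of Lemma~\ref{steep descent}: for a holomorphic function $F$ one has $\frac{d}{dy}\Re[F(c+\i y)]=-\Im[F'(c+\i y)]$, so
\[
\psi'(y)=\Im\bigl[f_1'(c+\i y)\bigr]=\Im\!\left[t+\frac{a(a+b)}{(c+\i y)^3}-\frac{db}{(c+\i y)^2}\right].
\]
As $t$ is real only the two rational terms contribute; writing $(c+\i y)^{-k}=(c-\i y)^k(c^2+y^2)^{-k}$ and using $\Im[(c-\i y)^3]=y(y^2-3c^2)$, $\Im[(c-\i y)^2]=-2cy$, one reaches the closed form
\[
\psi'(y)=\frac{y}{(c^2+y^2)^3}\Bigl[a(a+b)(y^2-3c^2)+2dbc\,(c^2+y^2)\Bigr].
\]

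The second step is to insert $c^2=a^2n^{-2/3}$ together with the relation $2bd\lambda=3a(a+b)$ — equivalently $d=\tfrac{3a(a+b)}{2b\lambda}$ from (\ref{constants}), and, geometrically, the double zero $f_1'(\lambda)=f_1''(\lambda)=0$ — which turns $2dbc$ into $\tfrac{3a^2(a+b)}{\lambda}n^{-1/3}$. Factoring the positive constant $a(a+b)$ out of the bracket gives
\[
\psi'(y)=\frac{a(a+b)\,y}{(c^2+y^2)^3}\Bigl[\bigl(1+\tfrac{3a}{\lambda}n^{-1/3}\bigr)y^2-3a^2\bigl(1-\tfrac{a}{\lambda}n^{-1/3}\bigr)n^{-2/3}\Bigr].
\]
For $y>0$ the prefactor $\tfrac{a(a+b)y}{(c^2+y^2)^3}$ is strictly positive, while the bracket is strictly increasing in $y^2$. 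For $n$ large it is negative at $y=0$ and positive for $y$ of order $1$, hence has a unique positive zero $y_\ast=y_\ast(n)$, of order $n^{-1/3}$, the bracket being negative on $(0,y_\ast)$ and positive on $(y_\ast,\infty)$. Thus the first assertion reduces to the single inequality $n^{-1/3}|a+b|\ge y_\ast(n)$: granting it, monotonicity of the bracket in $y^2$ forces $\psi'(y)>0$ for every $y\in[n^{-1/3}|a+b|,p]$. The second assertion is then automatic, since $f_1$ has real coefficients and $c\in\R$, so $\psi(-y)=\Re[-f_1(c-\i y)]=\Re\bigl[\overline{-f_1(c+\i y)}\bigr]=\psi(y)$; hence $\psi$ is even, $\psi'$ is odd, and negativity on $[-p,-n^{-1/3}|a+b|]$ is the reflection of positivity on $[n^{-1/3}|a+b|,p]$.

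The point I expect to be the main obstacle is precisely the endpoint comparison $n^{-1/3}|a+b|\ge y_\ast(n)$. It is not amenable to dropping lower‑order terms: at the scale $y\sim n^{-1/3}$ one has $|c+\i y|\sim n^{-1/3}$, so the three summands $t$, $a(a+b)(c+\i y)^{-3}$ and $db(c+\i y)^{-2}$ of $f_1'$ are of comparable magnitude, and the sign of $\psi'$ there is governed by an exact cancellation in which the identity $2bd\lambda=3a(a+b)$ is essential. From the displayed formula the inequality is equivalent, after multiplying by $n^{2/3}$, to $\bigl(1+\tfrac{3a}{\lambda}n^{-1/3}\bigr)|a+b|^2\ge 3a^2\bigl(1-\tfrac{a}{\lambda}n^{-1/3}\bigr)$, which I would verify by substituting the explicit value $\lambda=\bigl(\tfrac{a(a+b)}{2t}\bigr)^{1/3}$ and bounding the resulting elementary expression for all large $n$ (for the small values of $n$ the bracket is positive for every $y>0$ and nothing is needed). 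Everything else — the differentiation, the extraction of imaginary parts, the factorisation and the monotonicity argument — is routine.
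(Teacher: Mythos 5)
Your closed form for $\psi'(y)$ is correct (at $c=\lambda$ it reproduces the identity in the proof of Lemma \ref{steep descent}), and the evenness argument disposing of the negative-$y$ half is fine; the overall strategy is the same as the paper's. The genuine gap is precisely the step you defer as ``elementary bounding'': the endpoint comparison $n^{-1/3}|a+b|\ge y_\ast(n)$ is not a routine verification, because it is false for a whole range of parameters. Your bracket vanishes at $y_\ast(n)^2=3a^2n^{-2/3}\bigl(1-\tfrac{a}{\lambda}n^{-1/3}\bigr)\bigl(1+\tfrac{3a}{\lambda}n^{-1/3}\bigr)^{-1}$, so $y_\ast(n)=\sqrt{3}\,a\,n^{-1/3}(1+o(1))$, and the inequality you must check, $\bigl(1+\tfrac{3a}{\lambda}n^{-1/3}\bigr)(a+b)^2\ge 3a^2\bigl(1-\tfrac{a}{\lambda}n^{-1/3}\bigr)$, tends as $n\to\infty$ to $(a+b)^2\ge 3a^2$. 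This fails whenever $b<(\sqrt{3}-1)a$ (e.g.\ $a=1$, $b=1/2$): for such parameters and all large $n$ the bracket, and hence $\psi'$, is strictly negative at $y=n^{-1/3}|a+b|$ and on a nondegenerate piece of $[\,n^{-1/3}|a+b|,\sqrt{3}\,a\,n^{-1/3}\,]$, so no substitution of $\lambda=(a(a+b)/2t)^{1/3}$ can rescue the claim. As written, your argument establishes the statement only under the extra hypothesis $b\ge(\sqrt{3}-1)a$, or with the left endpoint enlarged to $C\,n^{-1/3}(a+b)$ for a suitable constant (e.g.\ $C=\sqrt{3}$, since $a<a+b$).

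For comparison, the paper's proof performs the same differentiation but then argues only by orders of magnitude, claiming the $y^3 a(a+b)$ term dominates the other two on the whole range; it is loose about constants there (the middle term of its display should carry the coefficient $3a(a+b)c^2=3a^3(a+b)n^{-2/3}$ rather than $a^2(a+b)n^{-2/3}$, which changes its endpoint comparison from ``$(a+b)^2$ versus $a$'' to ``$(a+b)^2$ versus $3a^2$''). Your exact computation is sharper and exposes that the constant at the left endpoint genuinely matters. Note that where the lemma is actually invoked (the construction of $\mathcal{C}_n$ and part (iv) of the proof of Lemma \ref{C bound}), the estimate is only needed for $|y|\ge c\,n^{-1/3}(a+b)$ with $c$ chosen large, so the weakened statement you can prove would suffice downstream; but your proposal does not prove the lemma as stated, and the deferred inequality is exactly where the argument breaks, not a formality.
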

\begin{proof}We compute
	\begin{align} \frac{d}{dy} \Re[f_1(n^{-1/3}a+\i y)]=&-\Im(\Re[f_1(n^{-1/3}a+\i y)]) \\
	=-\frac{y^3a(a+b)}{|n^{-1/3}a+\i y|^6}&+ \frac{a^2(a+b)n^{-2/3}y}{|n^{-1/3}a+\i y|^6}+\frac{ 3a^2(a+b)b n^{-1/3} y}{2b \lambda |n^{-1/3} a +\i y|^4}. \label{derivative near 0} \end{align}
	Note that for $y \in [n^{-1/3}|a+b|, p] \cup [-n^{-1/3}|a+b|, -p]$, we have $|n^{-1/3} a +\i y| \sim |y|$, so the first term of (\ref{derivative near 0}) is of order $y^{-3}$ and the third term of (\ref{derivative near 0}) is of order $y^{-3}n^{-1/3}$. So for large enough $n$, the third term of (\ref{derivative near 0}) is very small compared to the first term. For $y=\pm n^{-1/3}|a+b|,$ we have $| n^{-1}a(a+b)^4|=|y^3a(a+b)|>|a(a+b)n^{-2/3}ay|=|a^2(a+b)^2n^{-1/3}|$, and the derivative of $y^3a(a+b)$ is larger than the derivative of $a(a+b)n^{-2/3}ay$ for $y \in [n^{-1/3}|a+b|, p] \cup [-n^{-1/3}|a+b|, -p]$, so the first term of (\ref{derivative near 0}) has larger norm than the second term for $y \in [n^{-1/3}|a+b|, p] \cup [-n^{-1/3}|a+b|, -p]$. Thus the sign $\frac{d}{dy} \Re[-f_1(n^{-1/3}a+\i y)]$ is determined by the first term of (\ref{derivative near 0}) in these intervals. 
\end{proof}

Now we can define the contour $\mc{C}_n$. We will give the definition, and then justify that it gives a well defined contour.
\begin{definition}
	Let $q>0$ be a fixed real number such that for $0<y\leq q$, $\frac{d}{dy}\Re[-f_1(\lambda\pm y e^{\pm 2 \pi \i/3})]<0$.
	Let 
	\begin{multline} s=\max\left\lbrace \Re[-f_1(\lambda+ q e^{-2 \pi \i/3})], \Re[-f_1(\lambda+ q e^{ 2 \pi \i/3})], \right. \\  \left . \Re[-f_1(n^{-1/3}(a-\i |a+b|))], \Re[-f_1(n^{-1/3}(a+\i |a+b|))]\right\rbrace.\end{multline}
	Let $\alpha$ be the contourline $\alpha=\{\o:\Re[-f_1(\o)]=s\}$, and define the set
	$$S_n=\{\lambda+ y e^{\pm 2 \pi \i/3}: 0\leq  y \leq q\} \cup \alpha \cup [-an^{-1/3}-\i p,-an^{-1/3}+\i p].$$ 
	For sufficiently large $n$, define the path $\mc{C}_n$ to begin where $\alpha$ intersects $\{\lambda+ y e^{ -2 \pi \i/3}: 0\leq  y \leq q\}$, follow the path $\{\lambda+ y e^{ -2 \pi \i/3}: 0\leq  y \leq q\}$ toward $y=0$, then follow the path $\{\lambda+ y e^{ 2 \pi \i/3}: 0\leq  y \leq q\}$ until it intersects $\alpha.$ $\mc{C}_n$ then follows $\alpha$ in either direction (pick one arbitrarily) until it intersects $[-an^{-1/3}-\i p,-an^{-1/3}+\i p]$ in the upper half plane. $\mc{C}_n$ then follows the path $[-an^{-1/3}-\i p,-an^{-1/3}+\i p]$ toward $-an^{-1/3}- \i p$ until it intersects $\alpha$ in the negative half plane. Then $\mc{C}_n$ follows $\alpha$ in either direction (pick one arbitrarily) until it reaches its starting point where it intersects $\{\lambda+ y e^{ -2 \pi \i/3}: 0\leq  y \leq q\}$. See Figure \ref{Cn picture}\label{Cn def}
\end{definition}
\noindent We see that the $q$ in Definition \ref{Cn def} exists by applying Taylor's theorem along with the fact that $f_1'''(\lambda)>0$, and the $f_1'(\lambda)=f_1''(\lambda)=0$. 

\begin{lemma} \label{N'}
	The sets $\{\lambda+ y e^{ 2 \pi \i/3}: 0\leq  y \leq q\}$ and $\{\lambda+ y e^{ -2 \pi \i/3}: 0\leq  y \leq q\}$ both intersect $\alpha$ at exactly one point. Lemma \ref{N} and Lemma \ref{N'} will show that $\mathcal{C}_n$ is a well defined contour.
\end{lemma}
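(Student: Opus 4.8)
The plan is to reduce the statement to a one‑variable monotonicity plus intermediate‑value‑theorem argument carried out separately on each of the two segments, exploiting the way $q$ was chosen in Definition \ref{Cn def}.

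First I would fix the segment $R_+=\{\lambda+ye^{2\pi\i/3}:y\in[0,q]\}$ (the segment $R_-=\{\lambda+ye^{-2\pi\i/3}:y\in[0,q]\}$ being treated identically) and study the function $\psi(y)=\Re[-f_1(\lambda+ye^{2\pi\i/3})]$ on $[0,q]$; it is real‑analytic there because $q$ is small enough that the segment avoids the pole of $f_1$ at $0$. By \eqref{positive}, $\psi(0)=\Re[-f_1(\lambda)]=-f_1(\lambda)$, and by the defining property of $q$, $\psi'(y)<0$ for $y\in(0,q]$. Hence $\psi$ is a strictly decreasing bijection of $[0,q]$ onto $[\psi(q),-f_1(\lambda)]$, so that $R_+\cap\alpha$ corresponds to $\{y\in[0,q]:\psi(y)=s\}$, a set with at most one element. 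To upgrade this to exactly one element I would show $\psi(q)\leq s<-f_1(\lambda)$ and apply the intermediate value theorem.

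The lower bound $s\geq\psi(q)$ is automatic, since $\psi(q)=\Re[-f_1(\lambda+qe^{2\pi\i/3})]$ is one of the four quantities whose maximum is $s$. For the strict upper bound $s<-f_1(\lambda)$ I would check that all four of those quantities lie strictly below $-f_1(\lambda)$: the two endpoint quantities $\Re[-f_1(\lambda+qe^{\pm2\pi\i/3})]$ do so by the strict monotonicity just obtained; for the two near‑origin quantities $\Re[-f_1(n^{-1/3}(a\pm\i|a+b|))]$ I would use the Laurent expansion $-f_1(z)=\frac{a(a+b)}{2z^2}+O(1/z)$ near $0$ together with the fact that the argument $\arctan\!\big((a+b)/a\big)$ of $a+\i|a+b|$ lies in $(\pi/4,\pi/2)$ for $a,b>0$, so that $\Re[(a\pm\i|a+b|)^{-2}]<0$; this forces $\Re[-f_1(n^{-1/3}(a\pm\i|a+b|))]=-\tfrac12 a(a+b)\,n^{2/3}\,\big|\Re[(a\pm\i|a+b|)^{-2}]\big|+O(n^{1/3})\to-\infty$, hence these two quantities are below $-f_1(\lambda)$ for all large $n$, which is the regime in which Definition \ref{Cn def} operates (this is also consistent with the remark following \eqref{p definition}). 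Then $\psi(q)\leq s<-f_1(\lambda)=\psi(0)$, and strict monotonicity together with the intermediate value theorem produce a unique $y_0\in(0,q]$ with $\psi(y_0)=s$, so $R_+$ meets $\alpha$ in the single point $\lambda+y_0e^{2\pi\i/3}$; the same argument applies to $R_-$.

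I expect the only genuinely substantive step to be the inequality $s<-f_1(\lambda)$, and within it the estimate on the near‑origin quantities, which rests on the Laurent expansion of $f_1$ at $0$ and the sign of $\Re[(a+\i|a+b|)^{-2}]$ — equivalently, on knowing that for large $n$ the point $n^{-1/3}(a\pm\i|a+b|)$ lies inside the cone around the imaginary axis in which $\Re[-f_1]$ diverges to $-\infty$ near the pole at $0$ (see Figure \ref{contour}). Everything else — monotonicity along $R_\pm$ from the choice of $q$, and the intermediate value theorem — is routine.
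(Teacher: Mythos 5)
Your proposal is correct, and it is essentially the paper's intended argument: the paper's own proof is the single line ``this follows from the definition of $q$ and $s$,'' i.e.\ strict monotonicity of $y\mapsto\Re[-f_1(\lambda+ye^{\pm 2\pi\i/3})]$ on $[0,q]$ (the choice of $q$) plus the fact that $s$ is at least the endpoint values $\Re[-f_1(\lambda+qe^{\pm2\pi\i/3})]$, which is exactly your monotonicity-plus-IVT scheme. The one genuinely substantive ingredient you identify — that $s<\Re[-f_1(\lambda)]=-f_1(\lambda)$, which requires the two near-origin candidates in the definition of $s$ to lie strictly below $-f_1(\lambda)$ — is not addressed in the paper's one-line proof of this lemma; the paper only records the analogous inequality \eqref{small a+b} in the proof of Lemma \ref{N}, justified by pointing to the level lines in Figure \ref{contour}. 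Your Laurent-expansion argument ($-f_1(z)=\tfrac{a(a+b)}{2z^2}+O(1/z)$ near $0$, with $\Re[(a\pm\i|a+b|)^{-2}]<0$ because $a<a+b$, so these values tend to $-\infty$ as $n\to\infty$) is a self-contained, quantitative replacement for that appeal to the figure, and it has the additional merit of being insensitive to the sign discrepancy in the paper between the points $n^{-1/3}(a\pm\i|a+b|)$ appearing in the definition of $s$ and the points $-n^{-1/3}a\pm\i n^{-1/3}|a+b|$ used in \eqref{small a+b}, since $z^2=(-z)^2$. The only tiny inaccuracy is your remark that analyticity of $\psi$ on the segment needs $q$ small: the rays $\lambda+ye^{\pm2\pi\i/3}$ never pass through the pole at $0$ for any $y>0$ (their imaginary part is nonzero), so this holds regardless; this does not affect the argument.
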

\noindent This follows from the definition of $q$ and $s$. 
\begin{lemma} \label{N}
	There exists $N>0$ such that for all $n>N$, the sets $[n^{-1/3}+\i n^{-1/3}|a+b|, n^{-1/3}a+p]$ and $[-an^{-1/3}-n^{-1/3}|a+b|, -an^{-1/3}-p]$ both intersect $\alpha$ exactly once.
\end{lemma}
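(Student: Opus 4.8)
The plan is to deduce the claim from the strict monotonicity of $y\mapsto\Re[-f_1(-an^{-1/3}+\i y)]$ along the vertical segment, furnished by Lemma~\ref{derivative}, combined with an intermediate value argument once the values of $\Re[-f_1]$ at the endpoints of the two segments are located relative to the level $s$ defining the contour line $\alpha$. (I read the two segments in the statement as the upper piece $[-an^{-1/3}+\i n^{-1/3}|a+b|,\,-an^{-1/3}+\i p]$ and the lower piece $[-an^{-1/3}-\i p,\,-an^{-1/3}-\i n^{-1/3}|a+b|]$ of the vertical segment occurring in $S_n$.)

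The easy half is ``at most one''. By Lemma~\ref{derivative}, for $n$ large the function $y\mapsto\Re[-f_1(-an^{-1/3}+\i y)]$ is strictly monotone on each of $[n^{-1/3}|a+b|,p]$ and $[-p,-n^{-1/3}|a+b|]$, hence injective there, so the equation $\Re[-f_1(\o)]=s$ that defines $\alpha$ has at most one solution on each of the two segments.

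For existence I would compare the endpoint values of $\Re[-f_1]$ with $s$. At the endpoints nearest $0$, namely $\o=-an^{-1/3}\pm\i n^{-1/3}|a+b|$, the value $\Re[-f_1(\o)]$ is, by construction, among the four quantities whose maximum is $s$ in Definition~\ref{Cn def}, hence $\Re[-f_1(\o)]\le s$. At the far endpoints $\o=-an^{-1/3}\pm\i p$, continuity of $f_1$ away from $0$ together with the equality case $\Re[-f_1(\pm\i p)]=\Re[-f_1(\lambda)]$ recorded just before Lemma~\ref{derivative} give $\Re[-f_1(-an^{-1/3}\pm\i p)]\to\Re[-f_1(\lambda)]$ as $n\to\infty$. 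So it suffices to show $s<\Re[-f_1(\lambda)]$ for all large $n$. Among the four quantities defining $s$, the two not depending on $n$, namely $\Re[-f_1(\lambda+qe^{\pm 2\pi\i/3})]$, are $<\Re[-f_1(\lambda)]$ by the choice of $q$ (which rests on $f_1'(\lambda)=f_1''(\lambda)=0$ and $f_1'''(\lambda)>0$); the two $n$-dependent ones tend to $-\infty$, because writing $f_1(z)=tz-\frac{a(a+b)}{2z^2}+\frac{bd}{z}$ as in \eqref{f def}, the leading contribution as $\o=n^{-1/3}(-a\pm\i|a+b|)\to0$ is $\Re[\tfrac{a(a+b)}{2\o^2}]=\tfrac{a(a+b)}{2}n^{2/3}\,\Re[(-a\pm\i|a+b|)^{-2}]$ with $\Re[(-a+\i|a+b|)^{-2}]=\frac{a^2-(a+b)^2}{(a^2+(a+b)^2)^2}<0$ since $|a+b|>a$. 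Hence for $n$ large $s$ collapses to a constant strictly below $\Re[-f_1(\lambda)]$, so $\Re[-f_1(-an^{-1/3}\pm\i p)]>s$, and the intermediate value theorem produces a point of $\alpha$ on each segment; together with ``at most one'' this is exactly one, for all $n$ beyond a threshold $N$.

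I expect the only real difficulty to be the quantitative bookkeeping at the end: choosing $N$ so that simultaneously $s$ has already collapsed to $\max\{\Re[-f_1(\lambda+qe^{2\pi\i/3})],\Re[-f_1(\lambda+qe^{-2\pi\i/3})]\}$ and $\Re[-f_1(-an^{-1/3}\pm\i p)]$ has entered the gap below $\Re[-f_1(\lambda)]$; and observing that in the borderline case where $s$ is attained at an endpoint near $0$ the unique intersection point is that endpoint itself. Both are routine given the sign computation above and Lemma~\ref{derivative}.
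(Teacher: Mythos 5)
Your argument is correct and follows essentially the paper's own route: strict monotonicity of $\Re[-f_1]$ along the vertical segments (Lemma \ref{derivative}) combined with a comparison of the endpoint values against the level $s$, which the paper records as inequality \eqref{small a+b} together with \eqref{positive}. You actually supply more detail than the paper does, in particular the computation showing the values at $-an^{-1/3}\pm\i n^{-1/3}|a+b|$ tend to $-\infty$, so that $s$ stays uniformly below $\Re[-f_1(\lambda)]$ while the values at $-an^{-1/3}\pm\i p$ approach $\Re[-f_1(\lambda)]$ — a point the paper leaves to an inspection of the contour lines in Figure \ref{contour}.
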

\begin{proof} This is true because
	\begin{equation}
	\Re[-f_1(-n^{-1/3}(a\pm\i |a+b|))]<\Re[-f_1(\lambda)]. \label{small a+b}
	\end{equation}
	by the contour lines in Figure \ref{contour}. This in addition to Lemma \ref{derivative}, and (\ref{positive}) implies the lemma.
\end{proof}

\begin{figure} 
\begin{center}
		\includegraphics[width=11cm]{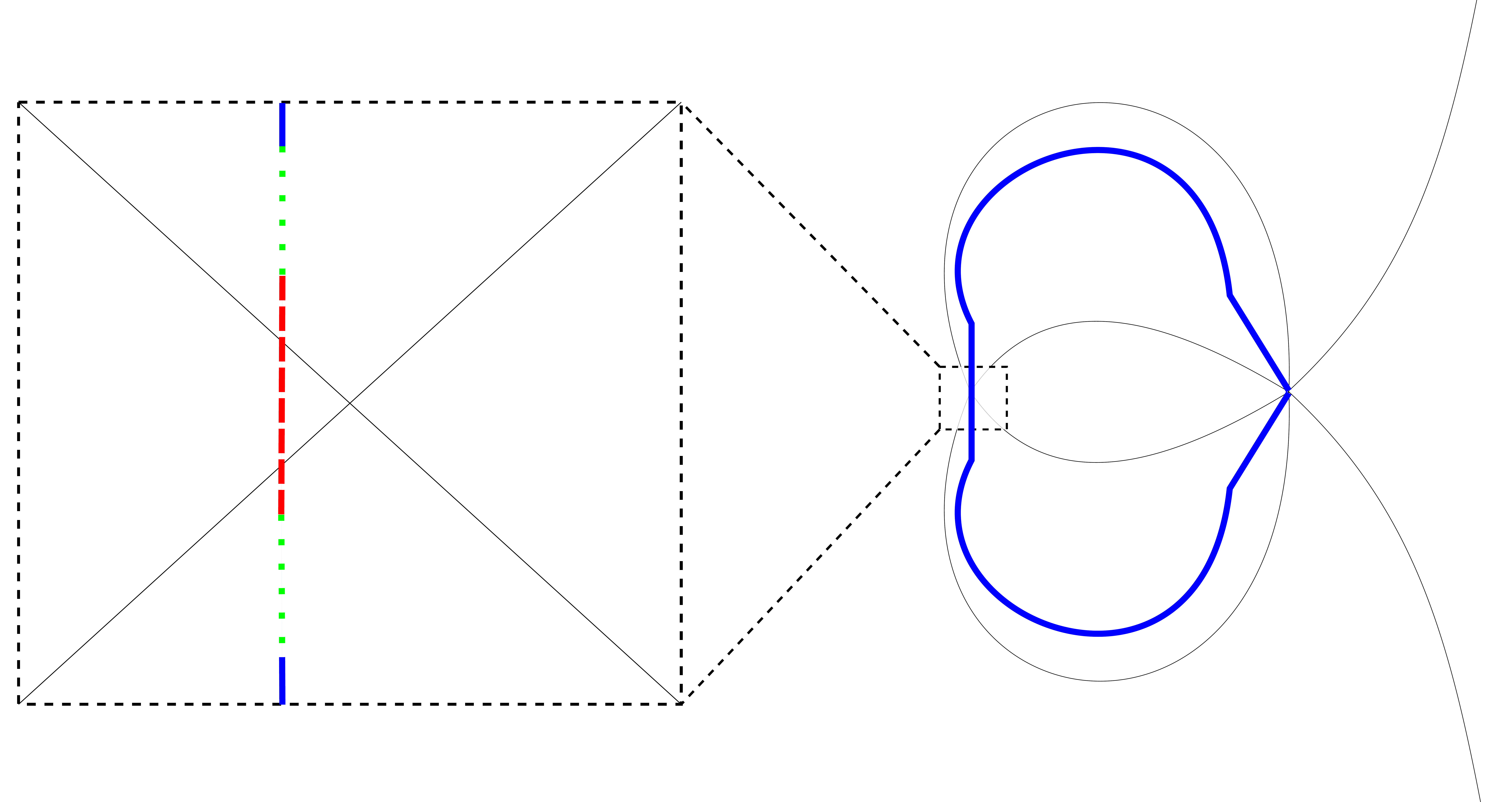}
\end{center}
	\caption{$\mc{C}_n$ is the thick, colored piecewise smooth curve, the contour lines $\{z:\Re[-f_1(z)]=f_1(\lambda)\}$ are the thin black curves. On the right side of the image we see $\mc{C}_n$ as a thick blue curve sandwiched between the contour lines. On the left we zoom in near 0 and see $\mc{C}_n$ pass the real axis as a dotted line to the left of zero. The contour lines meet at the point $0$ on the left and $\lambda$ on the right. We will now describe what section of the proof of Theorem \ref{C bound} bounds $h_n(z)-h_n(\o)+nt^{1/3}(z-\o)$ on different portions of $\mc{C}_n$. The diagonal segments of $\mc{C}_n$ near $\lambda$ are bounded in (ii). The curved segments in the right image, and the solid dark blue vertical segments at the top and bottom of the left image are bounded in (i). The dark red dashed segment that crosses the real axis in the left image is distance $O(n^{-1/3})$ from $0$ and is bounded in (iii). The green dotted segments in the left image are distance $O(n^{\delta-1/3})$ from $0$ for $\delta \in (0,1)$ and are bounded in (iv).}
	\label{Cn picture}
\end{figure}

\subsection{Properties of the contour $\mathcal{C}_n$: proof of Lemma \ref{C bound}}

Most of the work is used to prove part (c). The idea of this proof is to patch together the different estimates from the beginning of Section \ref{Cn}. Away from $0$ we use Lemma \ref{chaos control} and the fact that the contour is steep descent near $\lambda$. Very near $0$ on the scale $n^{-1/3}$ we use Lemma \ref{one third bound}. Moderately near $0$ we use Lemma \ref{delta bound}, and our control of the derivative of $f_1$ on the vertical strip of $\mathcal{C}_n$ near $0$. This last argument allows us to get bounds uniform in $\delta \in (0,1/3)$ when $\o$ is on the scale $n^{1/3-\delta}$. 

\begin{proof}[Proof of Lemma \ref{C bound}]
	
	(a) and (b) follow from the definition of $\mathcal{C}_n$. By a slight modification of the proof of Lemma $2.8$, we see that for $z \in \mc{\gamma}_r$,
	\begin{equation} \Re[h_n(z)-h_n(\lambda)+n^{1/3}t(z -\lambda) \leq n^{1/9}C, \label{gamma lambda}\end{equation}  so to show (c) it suffices to show that for $\o \in \mathcal{C}_n\setminus \mathcal{C}_n^{\epsilon}$, we have
	\begin{equation} \Re[h_n(\lambda)-h_n(\o)+n^{1/3}t(\lambda-\o)] \leq -n^{-1/3} \eta. \label{C lambda} \end{equation} 
	Below we split the contour into $4$ pieces and bound each separately. See Figure \ref{Cn picture}.
	\begin{itemize}
		
		\item[(i)] By Lemma \ref{derivative} and the construction of $\mathcal{C}_n$, we have $\Re[-f_1(\o)] \leq s< \Re[-f_1(\lambda)]$ for $\o \in \mc{C}_n \setminus (\{\lambda+ y e^{\pm 2 \pi \i/3}: 0\leq  y \leq q\} \cup [n^{-1/3}(-a-\i |a+b|),n^{-1/3}(-a+\i |a+b|)])$. So we can apply Lemma \ref{chaos control} and the fact that $f_2$ is bounded outside a neighborhood of $0$ to show that for any $c_1<0$, we have $\Re[h_n(z)-h_n(\lambda)+n^{1/3}t(z-\lambda)] \leq -n^{-1/3} \eta$ for $ \o \in \mc{C}_n \setminus (\{\lambda+ y e^{\pm 2 \pi \i/3}: 0\leq  y \leq q\} \cup [-n^{-1/3}a-\i c_1 |a+b|,-n^{-1/3}a+\i c_1 |a+b|]).$
		
		\item[(ii)]By the definition of $q$, The contour $\{\lambda+ y e^{\pm 2 \pi \i/3}: 0\leq  y \leq q\}$ is steep descent with respect to  the function $f_1$ at the point $\lambda$, so we can apply Lemma \ref{chaos control} and the fact that $f_2$ is bounded outside a neighborhood of $0$ to show $\Re[h_n(z)-h_n(\lambda)+n^{1/3}t(z-\lambda)] \leq -n^{-1/3} \eta$ for $ \o \in \{\lambda+ y e^{\pm 2 \pi \i/3}: 0\leq  y \leq q\} \setminus B_{\epsilon}(\lambda).$

		\item[(iii)] By Lemma \ref{one third bound}, for any $c_0$, we have $\Re[h_n(z)-h_n(\lambda)+n^{1/3}t(z-\lambda)] \leq -n^{-1/3} \eta$ for all $\o \in [n^{-1/3}(-a-\i c_0|a+b|),n^{-1/3}(-a-\i c_0|a+b|)]$. 
		
		\item[(iv)] Now we bound the $\Re[h_n(z)-h_n(\lambda)+n^{1/3}t(z-\lambda)]$ on the last piece of our contour $[n^{-1/3}(-a-\i c_0|a+b|),-n^{-1/3}a+\i c_1 |a+b|] \cup [-n^{-1/3}a-\i c_1 |a+b|,n^{-1/3}(-a-\i c_0|a+b|)].$ We will do this by fixing a constant $c>c_1$, and bounding the function on $\o=n^{-1/3} a + \i n^{\delta-1/3} c (a+b)$ for all pairs $n>N,\delta \in (0,1/3)$ such that $n^{1/3} \leq c_1/c$. 
		
		By Lemma \ref{delta bound}, we have that when $\o= n^{-1/3} a + \i n^{\delta-1/3} c (a+b)$, there exist constants $M_1,M_2,M_3$, such that
		$$\Re[n^{1/3}t \o +h_n(\o)-n^{1/3}f_1(\o)] \leq \frac{n^{2/3-2\delta}}{c^2}M_1+\frac{n^{1-3\delta}}{c^3}M_2+\frac{n^{4/9-\delta}}{c}M_3,$$
		and
		$$f_1(\o) \sim_{\delta} M \frac{n^{1-2\delta}}{c^2}.$$
		First we consider the case when $\delta \in (0,1/3-\epsilon)$. In this case, for any $r>0$ we can choose $c$ and $N_{r}$ large enough that for all $n>N_{r}$,
		$$\frac{\frac{n^{2/3-2\delta}}{c^2}M_1+\frac{n^{1-3\delta}}{c^3}M_2+\frac{n^{4/9-\delta}}{c}M_3}{\Re[n^{1/3}f_1(\o)]}<r/2,$$
		uniformly for all $\delta \in (0,1/3-\epsilon).$ In this case we also have that, by Lemma \ref{chaos control},
		$$|\Re[n^{1/3}t z+h_n(z)]| \leq n^{1/3}f_1(\lambda)+n^{1/9}f_2(\lambda)+C.$$
		By potentially increasing $N_r$, we have that for all $n>N_r$
		$$\frac{|\Re[n^{1/3}t z+h_n(z)]|}{\Re[n^{1/3}f_1(\o)]} \leq r/2.$$
		By Lemma \ref{derivative} and (\ref{small a+b}), for all pairs $n,\delta$ such that $n^{\delta-1/3}<c/c_1$, there is an $\eta>0$ such that
		$$ \Re[-f_1(\o)] \leq \Re[-f_1(\lambda)]-2\eta <-2\eta.$$
		setting $r=1/2$ gives 
		$$\Re[n^{1/3}t (z-\o)+h_n(z)-h_n(\o)] \leq \Re[-n^{1/3} f_1(\o)]+\frac{1}{2} \Re[n^{-1/3}f_1(\o)]<-\eta n^{1/3}.$$
		
		Now we prove the case $\delta \in (1/3-\epsilon, 1/3)$. Note that in the expression
		$$\Re[n^{1/3}t \o +h_n(\o)-n^{1/3}f_1(\o)] \leq \frac{n^{2/3-2\delta}}{c^2}M_1+\frac{n^{1-3\delta}}{c^3}M_2+\frac{n^{4/9-\delta}}{c}M_3,$$
		when $n$ is sufficiently large, we can bound the right hand side by $(M_1+M_2)n^{3\epsilon} \leq (r/2) n^{1/3}$ for any $r>0$. We also have 
		$$|\Re[n^{1/3}t \lambda -h_n(\lambda)-n^{1/3}f_1(\lambda)]| \leq n^{1/9} f_1(\lambda)+C \leq (r/2)n^{1/3}.$$
		The first inequality comes from Lemma \ref{chaos control}, and the second holds for large enough $n$. 
		By Lemma \ref{derivative} and (\ref{small a+b}), for all pairs $n,\delta$ such that $n^{\delta-1/3}<c/c_1$, there is an $\eta>0$ such that
		$$ \Re[-f_1(\o)] \leq \Re[-f_1(\lambda)]-2\eta <-2\eta.$$
		Setting $r=\eta$ gives 
		$$\Re[n^{1/3}t(\lambda-\o)+h_n(\lambda)-h_n(\o)] \leq n^{1/3}\Re[f_1(\lambda)-f_1(\o)] +n^{1/3} \eta \leq -\eta n^{1/3}.$$
	\end{itemize}
	The $c_1$ in part $(i)$ can be chosen as small as desired, the $c$ in part $(iv)$ has already been chosen, and the  $c_0$ in part $(iv)$ can be chosen as large as desired. Choose $c_1<c<c_0$ to complete the proof of (c). 
	
	Given inequalities (\ref{gamma lambda}) and (\ref{C lambda}), part (d) follows if we can show 
	$$\Re[n^{1/3}t(\lambda-\o)+h_n(\lambda)-h_n(\o)],$$
	for $\o \in \mathcal{C}_n^{\epsilon}.$
	Indeed this follows from Lemma \ref{chaos control} and the fact that the contour $\{\lambda+ y e^{\pm 2 \pi \i/3}: 0\leq  y \leq q\}$ is steep descent with respect to  the function $\Re[-f_1]$ at the point $\lambda$.
	
\end{proof}

\section{Dominated convergence} \label{Dominated Convergence}

In this section we carefully prove that the series expansion for $\det(1-\K_n)_{L^2(\mathcal{C}_n^{\epsilon})}$ gives an absolutely convergent series of integrals bounded uniformly in $n$. This allows us to use dominated convergence when we localize the integral in Proposition \ref{cut}, and again when we approximate the kernel by its Taylor expansion in Proposition \ref{taylor approximation}. First we zoom in on a ball of radius epsilon and show that we can absolutely bound $\det(1-\K_n^{\epsilon})_{L^2(\mathcal{C}_n^{\epsilon})}$ uniformly in $n$. 
\begin{lemma} \label{dom}
	For any sufficiently small $\epsilon>0$, and sufficiently large $r$, there exists a function $\overline{F}(\overline{\o},\overline{\o}')$, such that for all $\overline{\o}, \overline{\o}' \in \mathcal{C}_{-1}^{n^{1/9} \epsilon}$, $z \in \mathcal{D}_0^{n^{1/9} \epsilon}$, $n>N$ the integrand of $\overline{\mathsf{K}}_{n}^{\epsilon}(\overline{\o},\overline{\o}')$ in equation (\ref{K def}) is absolutely bounded by $\overline{F}(\overline{\o},\overline{\o}',\overline{z})$, and
	
	\begin{equation}\sum_{m=0}^{\infty}\int_{(\mathcal{C}_{-1}^{n^{1/9} \epsilon})^m} \left|\det \left(\int_{\mathcal{D}_0^{n^{1/9} \epsilon}}\overline{F}(\overline{\o}_i,\overline{\o}_j,\overline{z}) d\overline{z}\right)_{i,j=1}^m \right| d\overline{\o}_1...d\overline{\o}_m<\infty. \label{bar dominated} \end{equation}
\end{lemma}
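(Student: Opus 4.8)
The plan is to dominate the integrand of $\overline{\mathsf{K}}_n^{\epsilon}$ in \eqref{K def}, factor by factor, by a single $n$-independent function $\overline{F}(\overline{\o},\overline{\o}',\overline{z})$, and then to feed $\overline{F}$ into the Fredholm expansion in \eqref{bar dominated} by way of Hadamard's inequality and Stirling's formula. Throughout I would take $\epsilon<\lambda/3$ and $r>\epsilon$, so that $B_{2\epsilon}(\lambda)$ stays away from the poles at $0$ and $-(a+b)n^{-1/3}$ of all functions involved; then for $\overline{\o},\overline{\o}'\in\mathcal{C}_{-1}^{n^{1/9}\epsilon}$ and $\overline{z}\in\mathcal{D}_0^{n^{1/9}\epsilon}$ one has $\lambda+n^{-1/9}\overline{z}=\lambda+\i s$ with $|s|\le\epsilon$, $\lambda+n^{-1/9}\overline{\o},\lambda+n^{-1/9}\overline{\o}'\in B_{2\epsilon}(\lambda)$, and the rational prefactor $(\lambda+n^{-1/9}\overline{z})/(\lambda+n^{-1/9}\overline{\o})$ is bounded by $(\lambda+\epsilon)/(\lambda-2\epsilon)$.

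First I would Taylor expand the exponents around $\lambda$. Since $f_1'(\lambda)=f_1''(\lambda)=0$, Taylor's theorem on $B_{2\epsilon}(\lambda)$ gives $n^{1/3}\bigl(f_1(\lambda+n^{-1/9}\overline{\o})-f_1(\lambda)\bigr)=\tfrac{1}{6}f_1'''(\lambda)\overline{\o}^3+E_n(\overline{\o})$ with $|E_n(\overline{\o})|\le n^{-1/9}C_\epsilon|\overline{\o}|^4$, where $C_\epsilon=\tfrac{1}{24}\sup_{B_{2\epsilon}(\lambda)}|f_1''''|$ is finite and remains bounded as $\epsilon\to0$, and the same holds with $\overline{z}$ in place of $\overline{\o}$. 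Likewise $n^{1/9}\bigl(f_2(\lambda+n^{-1/9}\overline{\o})-f_2(\lambda)\bigr)=f_2'(\lambda)\overline{\o}+E_n'(\overline{\o})$ with $|E_n'(\overline{\o})|\le n^{-1/9}C_\epsilon'|\overline{\o}|^2$, and Lemma~\ref{chaos control} bounds $r_n$ (and $r_n'$) on $B_{2\epsilon}(\lambda)$ uniformly in $n\ge N$.

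The crux is to extract from these expansions a decay $e^{-c|\overline{\o}|^3}$ in $\overline{\o}$ with $c>0$ independent of $n$. On $\mathcal{D}_0^{n^{1/9}\epsilon}$ the variable $\overline{z}$ is purely imaginary, so $\Re[\tfrac{1}{6}f_1'''(\lambda)\overline{z}^3]=\Re[f_2'(\lambda)\overline{z}]=0$; and Lemma~\ref{steep descent}, in the quantitative form $\tfrac{d}{ds}\Re f_1(\lambda+\i s)=-4a(a+b)s^3/(\lambda^2+s^2)^3$, gives $n^{1/3}\Re[f_1(\lambda+n^{-1/9}\overline{z})-f_1(\lambda)]\le -c_\epsilon'' n^{-1/9}|\overline{z}|^4$ with $c_\epsilon''>0$, which absorbs the $O(n^{-1/9}|\overline{z}|^2)$ remainder of $f_2$ and leaves all $\overline{z}$-dependent exponential factors bounded by a constant depending only on $\epsilon,a,b,t$. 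On $\mathcal{C}_{-1}^{n^{1/9}\epsilon}$ one has $|\overline{\o}|\le 1+n^{1/9}\epsilon\le 2n^{1/9}\epsilon$ for $n\ge N$, hence $n^{-1/9}|\overline{\o}|\le 2\epsilon$, so $|E_n(\overline{\o})|\le 2\epsilon C_\epsilon|\overline{\o}|^3$ and the $f_2$- and $r_n$-remainders are at most linear in $|\overline{\o}|$; since $f_1'''(\lambda)>0$ and along $\mathcal{C}_{-1}$ an elementary computation on the rays $-1+te^{\pm 2\pi\i/3}$ yields $\Re[\overline{\o}^3]\ge c_0|\overline{\o}|^3-C_0$ for some $c_0>0$, choosing $\epsilon$ small enough that $2\epsilon C_\epsilon\le\tfrac{1}{12}f_1'''(\lambda)c_0$ gives $-n^{1/3}\Re[f_1(\lambda+n^{-1/9}\overline{\o})-f_1(\lambda)]\le C-c|\overline{\o}|^3$ with $c:=\tfrac{1}{12}f_1'''(\lambda)c_0$ uniform in $n$. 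Collecting all factors (the linear $f_2'(\lambda)\overline{z}$ term has zero real part on $\mathcal{D}_0$, while $f_2'(\lambda)\overline{\o}$ contributes at most $|f_2'(\lambda)||\overline{\o}|$), the integrand of $\overline{\mathsf{K}}_n^{\epsilon}$ is bounded, for every $n\ge N$, by $\overline{F}(\overline{\o},\overline{\o}',\overline{z}):=C_1 e^{C_2|\overline{\o}|-c|\overline{\o}|^3}/\bigl(|\overline{z}-\overline{\o}|\,|\overline{z}-\overline{\o}'|\bigr)$, a function of $\overline{\o},\overline{\o}',\overline{z}$ alone.

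Finally I would push $\overline{F}$ through the Fredholm expansion. Because $\Re[\overline{\o}],\Re[\overline{\o}']\le -1$ on $\mathcal{C}_{-1}$ while $\overline{z}$ is imaginary, $|\overline{z}-\overline{\o}|\ge\max(1,|\Im\overline{z}-\Im\overline{\o}|)$ and similarly for $\overline{\o}'$, so the elementary estimate $\int_{\R}(1+|\beta-a|)^{-1}(1+|\beta-b|)^{-1}\,d\beta\le C$ gives $\int_{\mathcal{D}_0^{n^{1/9}\epsilon}}\overline{F}(\overline{\o},\overline{\o}',\overline{z})\,|d\overline{z}|\le \overline{G}(\overline{\o}):=C_3 e^{C_2|\overline{\o}|-c|\overline{\o}|^3}$, uniformly in $\overline{\o}'$ and $n$. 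Hadamard's inequality applied row by row then bounds the $m$-th determinant in \eqref{bar dominated} by $m^{m/2}\prod_{i=1}^m\overline{G}(\overline{\o}_i)$, and since $\mathcal{C}_{-1}^{n^{1/9}\epsilon}\subseteq\mathcal{C}_{-1}$ the $m$-th integral is at most $m^{m/2}M^m$ with $M:=\int_{\mathcal{C}_{-1}}\overline{G}\,|d\overline{\o}|<\infty$ (the cubic term makes $\overline{G}$ super-exponentially small); together with the $1/m!$ of the Fredholm expansion, $\sum_{m\ge0}\tfrac{1}{m!}m^{m/2}M^m<\infty$ by Stirling's formula, which is \eqref{bar dominated}. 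I expect the main obstacle to be the uniform-in-$n$ cubic bound on $\mathcal{C}_{-1}^{n^{1/9}\epsilon}$: the Taylor remainder of $n^{1/3}f_1(\lambda+n^{-1/9}\overline{\o})$ becomes negligible on the $n$-dependent truncation only if $\epsilon$ is taken small (and one must check that $C_\epsilon$ stays bounded as $\epsilon\to0$), and one must verify carefully that $\Re[\overline{\o}^3]$ genuinely grows cubically out along $\mathcal{C}_{-1}$ even though it is negative near $\overline{\o}=-1$.
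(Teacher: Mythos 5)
Your proposal is correct and follows essentially the same route as the paper's proof: bound the rational prefactor, Taylor expand the exponent about $\lambda$ so that the $\overline{z}$-dependence (purely imaginary, steep descent) contributes only a bounded factor while $f_1'''(\lambda)>0$ yields cubic decay in $\overline{\o}$ along the $e^{\pm 2\pi\i/3}$ directions of $\mathcal{C}_{-1}$, then integrate out $\overline{z}$ and conclude via Hadamard's bound and Stirling. Your handling of the Taylor remainders on the $n$-dependent truncation (absorbing the quartic error into the cubic term for small $\epsilon$, and using the quantitative steep descent to absorb the $f_2$ remainder in $\overline{z}$) is in fact a slightly more careful rendering of the paper's $\delta_1,\delta_2,\delta_3$ perturbation argument, not a different method.
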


\begin{proof}
	For $\overline{\o},\overline{\o}' \in \mathcal{C}_{-1}^{\epsilon}$, and $\overline{z} \in \mathcal{D}_0^{\epsilon}$, we have
	
	$$\left|\frac{\lambda+n^{-1/9} \overline{z}}{\lambda+n^{-1/9} \overline{\o}}\right| \leq \left|\frac{\lambda+\epsilon}{\lambda-\epsilon}\right|,$$
	and by Taylor approximation, we have the additional bounds
	
	\begin{align} n^{1/3} (f_1(\lambda+n^{-1/9}\overline{z})-f_1(\lambda+n^{-1/9} \overline{\o})) &\leq (f_1'''(\lambda)+\delta_1)(\overline{z}^3-\overline{\o}^3),
	\label{f_1} \\
	n^{1/9}(f_2(\lambda+n^{-1/9} \overline{z})-f_2(\lambda+n^{-1/9} (\overline{\o}))) &\leq (f_2'(\lambda)+\delta_2)(\overline{z}-\overline{\o}),
	\label{f_2}\\
	r_n(\lambda+n^{-1/9} \overline{z})-r_n(\lambda+n^{-1/9} \overline{\o}) &\leq C n^{-1/9} (\overline{z}-\overline{\o}) \leq C\epsilon \leq \delta_3.
	\label{f_3}\end{align}
	
	Note that in these bounds we can make $\delta_1,\delta_2,\delta_3$ as small as desired by choosing $\epsilon$ small. Equations (\ref{f_1}) and (\ref{f_2}) follow from the fact that $f_1$, and $f_2$ are holomorphic in the compact set $\overline{B}_{\epsilon}(\lambda)$. And equation (\ref{f_3}) follows from Lemma \ref{chaos control}. Note that along $\mathcal{D}_0$, $z$ is purely imaginary, so (\ref{f_1}),(\ref{f_2}), and (\ref{f_3}) show that the full exponential in the integrand in (\ref{K def}) is bounded above by
	
	\begin{equation} e^{2 \delta_3} e^{-(f_1'''(\lambda)-\delta_1)\overline{\o}^3-(f'_2(\lambda)-\delta_2)\overline{\o}}. \label{omega bound}\end{equation}
	We choose $\epsilon$ small enough that $\delta_1 < f'''_1(\lambda)$, so that (\ref{omega bound}) has exponential decay as $\o$ goes to $\infty$ in directions $e^{\pm 2 \pi \i/3}$. Set
	$$\overline{F}(\overline{\o},\overline{\o}',\overline{z})=\left|\left(\frac{\lambda+\epsilon}{\lambda-\epsilon}\right) e^{2\delta_3}e^{-(f_1'''(\lambda)-\delta_1)\overline{\o}^3-(f'_2(\lambda)-\delta_2)}\frac{1}{(\overline{z}+1)(\overline{z}+1)}\right|.$$
	By the sentence preceeding (\ref{omega bound}) $\overline{F}$ absolutely bounds the integrand of $\overline{\mathsf{K}}_{n}^{\epsilon}$.
	Now set $L_1= \frac{|\lambda+\epsilon|}{|\lambda-\epsilon|} e^{2\delta_3} \int_{\mathcal{D}_0} \frac{1}{(\overline{z}+1)(\overline{z}+1)} d \overline{z}$ so that $2 e^{2\delta_3}\int_{\mathcal{D}_0} \frac{1}{(\overline{z}-\overline{\o})(\overline{z}-\overline{\o}')} d \overline{z} \leq L_1.$ Then
	
	\begin{equation}\int_{\mathcal{D}_0^{\epsilon}}\overline{F}(\overline{\o},\overline{\o}',\overline{z}) \leq L_1 \left| e^{-(f_1'''(\lambda)-\delta_1)\overline{\o}^3-(f'_2(\lambda)-\delta_2)}\right| , \label{L}
	\end{equation}
	
	By Hadamard's bound 
	
	$$\left|\det\left(\int_{\mathcal{D}_0^{n^{1/9}} \epsilon}\overline{F}(\overline{\o}_i,\overline{\o}_j',\overline{z}) d \overline{z}\right)_{i,j=1}^m\right| \leq m^{m/2} L_1^m \prod_{i=1}^m \left| e^{-(f'''_1(\lambda)-\delta)\overline{\o}^3-(f_2'(\lambda)-\delta)\overline{\o}} \right|.$$
	
	Now  because $\delta_1<f'''_1(\lambda)$, we can set
	
	$$S=\int_{\mathcal{C}_{-1}^{n^{1/9} \epsilon}} \left| e^{-(f'''_1(\lambda)-\delta)\overline{\o}^3-(f_2'(\lambda)-\delta)\overline{\o}} \right| d \overline{\o}<\infty.$$
	
	Then we have the bound,
	
	$$\int_{(\mathcal{C}_{-1}^{n^{1/9}\epsilon})^m} \left|\det\left(\int_{\mathcal{D}_0^{n^{1/9}\epsilon}} \overline{F}(\overline{\o}_i,\overline{\o}_j',\overline{z}) d\overline{z}\right)_{i,j=1}^m\right| d\overline{\o}_1...d\overline{\o}_m \leq m^{m/2} (SL_1)^m.$$
	
	So by Stirling's approximation
	
	$$\sum_{m=0}^{\infty}\int_{(\mathcal{C}_{-1}^{n^{1/9} \epsilon})^m} \left|\det\left(\int_{\mathcal{D}_0^{n^{1/9} \epsilon}} \overline{F}(\overline{\o}_i,\overline{\o}_j,\overline{z}) d \overline{z}\right)_{i,j=1}^m\right| d\overline{\o}_1...d\overline{\o}_m< \infty.$$
\end{proof}

The next lemma completes our dominated convergence argument, by controlling the contribution to $\det(I-K_n)_{L^2(\mc{C}_n^{\epsilon})}$ of $z \in \gamma_r \setminus \gamma_r^{\epsilon}$.

\begin{lemma} \label{dom 2}
	
	For any sufficiently small $\epsilon>0$, and sufficiently large $r$, there is a function $\overline{G}(\overline{\o},\overline{\o}',\overline{z})$, and a natural number $N$, such that for all $\overline{\o},\overline{\o}' \in \overline{\mathcal{C}}_{n}^{\epsilon}$ and $\overline{z} \in \overline{\mathcal{\gamma}}_r$, $n >N$, the integrand of $\overline{\mathsf{K}}_{n}(\overline{\o},\overline{\o}')$ is absolutely bounded by $\overline{G}(\overline{\o},\overline{\o}',\overline{z})$, and
	
	\begin{equation} \sum_{m=0}^{\infty} \frac{1}{m!} \int_{(\overline{C}^{\epsilon})^m} \left| \det \left(\int_{\overline{\mathcal{\gamma}}_r}\overline{G}(\overline{\o}_i,\overline{\o}_j,\overline{z})dz \right)_{i,j=1}^m\right| d\overline{\o}_i...d\overline{\o}_j<\infty, \label{dominated} \end{equation}
	
	where $\overline{\mathcal{\gamma}}_r$ and $\overline{C}_n^{\epsilon}$ are the rescaled contours of $\mc{\gamma}_r$ and $C_n^{\epsilon}$ respectively.
\end{lemma}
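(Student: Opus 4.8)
The plan is to run the argument of Lemma~\ref{dom} once more, the only new feature being that the inner contour is now all of $\overline{\mathcal{\gamma}}_r$ rather than its truncation $\overline{\mathcal{\gamma}}_r^{\epsilon}=\mathcal{D}_0^{n^{1/9}\epsilon}$. Accordingly I would split $\overline{\mathcal{\gamma}}_r=\overline{\mathcal{\gamma}}_r^{\epsilon}\cup(\overline{\mathcal{\gamma}}_r\setminus\overline{\mathcal{\gamma}}_r^{\epsilon})$ and build $\overline{G}$ piecewise: on $\overline{\mathcal{\gamma}}_r^{\epsilon}$, which for $\epsilon<r$ is exactly the segment $[-\i n^{1/9}\epsilon,\i n^{1/9}\epsilon]$ on which $\overline{z}$ is purely imaginary, the estimates of Lemma~\ref{dom} apply verbatim and I take $\overline{G}=\overline{F}$ there. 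As a preliminary step I would, exactly as in Section~\ref{localize}, deform $\overline{\mathcal{C}}_n^{\epsilon}$ to the truncation $\mathcal{C}_{-1}^{n^{1/9}\epsilon}$ of the fixed contour $\mathcal{C}_{-1}$; this does not change the Fredholm determinant and separates the $\overline{\o},\overline{\o}'$ contour from $\overline{\mathcal{\gamma}}_r$ near the origin, so that $|\overline{z}-\overline{\o}|,|\overline{z}-\overline{\o}'|\ge c(1+|\overline{z}|)$ and hence $\bigl|\frac{\lambda+n^{-1/9}\overline{z}}{(\lambda+n^{-1/9}\overline{\o})(\overline{z}-\overline{\o})(\overline{z}-\overline{\o}')}\bigr|\le \frac{C}{(1+|\overline{z}|)^2}$ uniformly in $n$ along $\overline{\mathcal{\gamma}}_r$.

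The core is the bound on the tail $\overline{\mathcal{\gamma}}_r\setminus\overline{\mathcal{\gamma}}_r^{\epsilon}$. There I would write the exponent of the $\overline{z}$-integrand as $\bigl[h_n(z)-h_n(\lambda)+n^{1/3}t(z-\lambda)\bigr]+\bigl[h_n(\lambda)-h_n(\o)+n^{1/3}t(\lambda-\o)\bigr]$ with $z=\lambda+n^{-1/9}\overline{z}$ and $\o=\lambda+n^{-1/9}\overline{\o}$. For $z\in\mc{\gamma}_r\setminus\mc{\gamma}_r^{\epsilon}$ the first bracket has real part $\le-n^{1/3}C$ by the steep descent of $\Re[f_1]$ on $\mc{\gamma}_r$ away from $\lambda$ (Lemma~\ref{steep descent} and the discussion after it, i.e.\ the mechanism behind Lemma~\ref{gamma bound}), and on the two semi-infinite rays of $\mc{\gamma}_r$ it improves to $\le-n^{1/3}C-c\,n^{2/9}|\overline{z}|$ since there the linear term $tz$ dominates $h_n$. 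The second bracket has real part $\le C-c|\overline{\o}|^3$: the cubic gain comes from $n^{1/3}\Re[f_1(\lambda)-f_1(\o)]$ using $f_1'(\lambda)=f_1''(\lambda)=0$, $f_1'''(\lambda)>0$ and the geometry of $\mc{C}_n$ near $\lambda$, while $n^{1/9}(f_2(\lambda)-f_2(\o))+(r_n(\lambda)-r_n(\o))=O(n^{1/9})$ by Lemma~\ref{chaos control} is swallowed by the $-n^{1/3}C$ of the first bracket once $n$ is large. Combined with the prefactor bound, the $\overline{z}$-integrand on the tail is $\le\frac{C}{(1+|\overline{z}|)^2}e^{-c\,n^{2/9}|\overline{z}|}e^{-c|\overline{\o}|^3}$ for all $n>N$; moreover a fixed $\overline{z}$ lies on $\overline{\mathcal{\gamma}}_r$ only for $n\gtrsim|\overline{z}|^9$, so the surviving $e^{-n^{1/3}C}$ is $\le e^{-c'|\overline{z}|^3}$ there. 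One may therefore take
\[
\overline{G}(\overline{\o},\overline{\o}',\overline{z}):=C\Bigl(\tfrac{1}{(1+|\overline{z}|)^2}+e^{-c'|\overline{z}|^3}\Bigr)e^{-c|\overline{\o}|^3},
\]
which, after enlarging $C$, dominates $\overline{F}$ on $\overline{\mathcal{\gamma}}_r^{\epsilon}$ as well as the integrand on the tail, is genuinely independent of $n$, and satisfies $\int_{\overline{\mathcal{\gamma}}_r}\overline{G}(\overline{\o},\overline{\o}',\overline{z})\,|d\overline{z}|\le C'e^{-c|\overline{\o}|^3}=:\Phi(\overline{\o})$ uniformly in $n$.

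The rest is routine: Hadamard's inequality applied row by row gives $\bigl|\det\bigl(\int_{\overline{\mathcal{\gamma}}_r}\overline{G}(\overline{\o}_i,\overline{\o}_j,\overline{z})\,d\overline{z}\bigr)_{i,j=1}^m\bigr|\le m^{m/2}\prod_{i}\Phi(\overline{\o}_i)$, so, since after deformation $\overline{\mathcal{C}}_n^{\epsilon}$ lies inside $\mathcal{C}_{-1}$ and $S:=\int_{\mathcal{C}_{-1}}\Phi(\overline{\o})\,|d\overline{\o}|<\infty$, the left side of \eqref{dominated} is at most $\sum_{m\ge 0}m^{m/2}S^m/m!<\infty$ by Stirling. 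I expect the main obstacle to be exactly the tail estimate of the second paragraph: producing a single, $n$-independent, $\overline{z}$-integrable dominating function despite the fact that both $\overline{\mathcal{\gamma}}_r$ and $\overline{\mathcal{C}}_n^{\epsilon}$ drift with $n$, which forces the observation that the purely-exponential-in-$n$ smallness $e^{-n^{1/3}C}$ from Lemma~\ref{gamma bound} not only beats the polynomially growing length of the vertical part of $\overline{\mathcal{\gamma}}_r$ but, at a fixed $\overline{z}$, upgrades to genuine decay in $|\overline{z}|$. A secondary point needing care is that deforming $\overline{\mathcal{C}}_n^{\epsilon}$ off the origin must not spoil the exponent bounds — which is precisely why one splits $h_n(z)-h_n(\o)$ through the value at $\lambda$, each piece being stable under that deformation.
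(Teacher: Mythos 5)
Your proposal is correct and follows essentially the same route as the paper's proof: the same decomposition of $\mc{\gamma}_r$ into the piece near $\lambda$ (where one takes $\overline{G}=\overline{F}$), a finite middle piece and the far rays, the same use of Lemma \ref{chaos control} plus the steep descent of $\Re[f_1]$ on $\mc{\gamma}_r$ split through the value at $\lambda$ (giving the $e^{-n^{1/3}\eta}$ gain in $z$ and the cubic decay $e^{(-f_1'''(\lambda)+\delta)\overline{\o}^3}$ in $\overline{\o}$), followed by Hadamard's bound and Stirling — the only structural difference being that you insist on a literally $n$-independent $\overline{G}$, whereas the paper lets $\overline{G}$ contain $n$-dependent factors such as $|e^{n^{1/3}(f_1(\lambda)-f_1(\o))}|$ and only bounds $\int_{\overline{\gamma}_r}\overline{G}$ uniformly in $n$. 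Two small inaccuracies to fix in your write-up: the claim that $\overline{z}\in\overline{\gamma}_r$ forces $n\gtrsim|\overline{z}|^9$ is true only on the vertical (purely imaginary) portion, since on the rays $|\overline{z}|\geq n^{1/9}r$ gives the reverse inequality and the decay in $|\overline{z}|$ there must instead come from the linear term $t\,\Re[\overline{z}]$ (which your ray estimate does supply, though its stated form $-c\,n^{2/9}|\overline{z}|$ fails near the ray endpoints, where one should fall back on $-n^{1/3}C\leq -C(|\overline{z}|/r)^3$); and on the far rays the prefactor bound should be $C/(1+|\overline{z}|)$ rather than $C/(1+|\overline{z}|)^2$ because $|z|=|\lambda+n^{-1/9}\overline{z}|$ grows there — both harmless once combined with the exponential factors.
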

\begin{proof}
	
	Let $\overline{G}=\overline{F}$ for $z \in \mathcal{\gamma}_r^{\epsilon}$. We decompose the integral along $\mathcal{\gamma}_r$ in three parts:  the integral along $\mathcal{\gamma}_r^{\epsilon}$, the integral along $(e^{-2\pi \i/3} \infty,-r) \cup (r,e^{2\pi \i/3} \infty)$ and the integral along $[-r,-\epsilon] \cup [\epsilon,r]$. For $z \in \mathcal{\gamma}_r \setminus \mathcal{\gamma}_r^{\epsilon}$ we have the following bounds
	
	\begin{align}|e^{n^{1/3}t(z-\o) +h_n(z)-h_n(\o)}| &\leq  |e^{n^{1/3}(f_1(z)-f_1(\o))+n^{1/9}C_2+C_3}|\nonumber \\
	&\leq |e^{n^{1/3}(f_1(z)-f_1(\o)+\delta)}| \nonumber \\
	&\leq |e^{n^{1/3}(f_1(z)-f_1(\lambda)+\delta)}||e^{n^{1/3}(f_1(\lambda)-f_1(\o))}| \label{exp}.
	\end{align}
	
	Where the first inequality follows from Lemma \ref{chaos control}. If we choose $\delta< \eta/2$, and recall that if $z \in \mathcal{\gamma}_r \setminus \mathcal{\gamma}_r^{\epsilon}$, then $f_1(z)-f_1(\lambda)< -\eta,$ so $f_1(z)-f_1(\lambda)+\delta< -\eta/2<0$. So if we wish we can bound (\ref{exp}) by either of the following expressions
	
	\begin{equation}
	|e^{n^{1/3}(f_1(\lambda)-f_1(\o))}| \label{exp 2}
	\end{equation}
	\begin{equation}
	|e^{n^{1/9}(-tz+t\lambda)}||e^{n^{1/3}(f_1(\lambda)-f_1(\o))}| \label{exp 3}
	\end{equation}
	
	The bound (\ref{exp 3}) follows from the fact that we can choose $r$ large enough so that $|f_1(z)+tz| \leq \delta$ outside $B_r(0)$. Then because the exponent in the first factor of (\ref{exp}) is negative, for large enough $n$ we can remove the constant $\delta$ in return for reducing $n^{1/3}$ to $n^{1/9}$. 
	
	Now for $z \in [-r,-\epsilon] \cup [\epsilon,r]$, we have
	
	$$\left|\frac{z}{\o}\right| \leq \left| \frac{r+\lambda}{\lambda-\epsilon}\right|, \qquad \left|\frac{1}{(\overline{z}-\overline{\o})(\overline{z}-\overline{\o}')}\right| \leq 1.$$
	
	So for $z \in [-r,-\epsilon] \cup [\epsilon,r]$, we set 
	
	$$\overline{G}(\overline{\o},\overline{\o}',\overline{z})=\left| \frac{r+\lambda}{\lambda-\epsilon} \right| \left|  \frac{1}{(\overline{z}-\overline{\o})(\overline{z}-\overline{\o}')} \right|  \left| e^{n^{1/3}(f_1(\lambda)-f_1(\o))}\right|.$$
	
	Using the above bounds and (\ref{exp 2}) we see that the integrand of $\overline{\mathsf{K}}_n$ is absolutely bounded by $\overline{G}$ in this region. Set $L_2=\int_{i\mathbb{R}}\frac{r+\lambda}{\lambda-\epsilon}\frac{1}{(\overline{z}+1)(\overline{z}+1)} d \overline{z}$ so that the integral of $\overline{G}$ on the rescaled contour of $[-r,-\epsilon] \cup [\epsilon,r]$ is bounded by $L_2|e^{n^{1/3}(f_1(\lambda)-f_1(\o))}|$. 
	
	For $z \in (e^{-2\pi \i/3} \infty,-r) \cup (r,e^{2\pi \i/3} \infty)$, we have
	
	$$\left|\frac{1}{(\overline{z}-\overline{\o})(\overline{z}-\overline{\o}')}\right| \leq 1.$$
	
	So for $z \in (e^{-2\pi \i/3} \infty,-r) \cup (r,e^{2\pi \i/3} \infty)$, we set
	
	$$\overline{G}(\overline{\o},\overline{\o}',\overline{z})= \left|\frac{z}{\o} \right| \left| e^{t(\lambda-\overline{z})} \right| \left| e^{(-f_1'''(\lambda)+\delta)\overline{\o}}\right|.$$
	
	Thus by (\ref{exp 3}), we can see that the integrand of $\overline{\mathsf{K}_n}$ is absolutely bounded by $\overline{G}$ in this region. Now let $L_3=\int_{(e^{-2 \pi \i/3} \infty,-r] \cup [r,e^{2\pi \i/3} \infty)} \left|\frac{\lambda+\overline{z}}{\lambda-\epsilon}\right| |e^{t(\lambda-\overline{z})}| d \overline{z}$. For all $n$, the integral of $\overline{G}$ over the rescaled contour $(e^{-2 \pi \i/3} \infty,-r] \cup [r,e^{2\pi \i/3} \infty)$ is bounded above by $L_3 |e^{(-f_1'''(\lambda)+\delta)\overline{\o}^{3}}|$. 
	
	Let $\overline{\mathcal{\gamma}}_r$ be the rescaled contour $\mathcal{\gamma}_r$ in the variable $\overline{z}$
	
	\begin{equation}
	\int_{\overline{\mathcal{\gamma}}_r} \overline{G} d\overline{z} \leq (L_1+L_2+L_3) e^{(-f_1'''(\lambda)+\delta)\overline{\o}^{3}} \leq L e^{(-f_1'''(\lambda)+\delta)\overline{\o}^{3}}, \label{G bound}
	\end{equation}
	where the constant $L$ comes from (\ref{L}). Thus we have bounded $\int_{\overline{\mathcal{\gamma}}_r} \overline{G} d\overline{z}$ by a constant times a term which has exponential decay as $\overline{\o} \to e^{\pm 2\pi \i/3} \infty$. The same argument as in Lemma \ref{dom} shows that 
	
	$$\sum_{m=0}^{\infty} \frac{1}{m!} \int_{(C^{\epsilon})^m} \left|\det\left(\int_{\mathcal{\gamma}_r^{\epsilon}}G(\o_i,\o_j,z)dz\right)_{i,j=1}^m\right| d\o_i...d\o_j<\infty.$$
	
\end{proof}

\begin{lemma} \label{max bound lemma} Let $\o_1 \in \mc{C}_n \setminus \mc{C}_n^{\epsilon}$ and $\o_2,..,\o_m \in \mc{C}^n$. There exist positive constants $M,L_4, \eta>0$ so that for sufficiently large $n$, we have
	\begin{equation}|\overline{\mathsf{K}}_{n}(\overline{\o}_i,\overline{\o}_j)| \leq M \qquad  \nonumber \end{equation}
	and
	\begin{equation} |\overline{\mathsf{K}}_{n}(\overline{\o}_1, \overline{\o}_i)| \leq L_4 n^{4/9} e^{-n^{1/3} \eta}, \nonumber \end{equation}
	for all $i,j$.
\end{lemma}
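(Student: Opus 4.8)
The plan is to estimate, directly, the integral representation of the kernel,
\[
\overline{\mathsf{K}}_n(\overline{\o},\overline{\o}')=\frac{1}{2\pi\i}\int_{\overline{\mc{\gamma}}_r}\frac{1}{(\overline{z}-\overline{\o})(\overline{z}-\overline{\o}')}\,\frac{z}{\o}\,e^{\,n^{1/3}t(z-\o)+h_n(z)-h_n(\o)}\,d\overline{z}
\]
(with $z=\lambda+n^{-1/9}\overline{z}$, $\o=\lambda+n^{-1/9}\overline{\o}$, and $\overline{\mc{\gamma}}_r,\overline{\mc{C}}_n$ the rescaled contours), by feeding into it the pointwise exponent bounds of Lemma \ref{C bound} together with the Taylor-type estimates already isolated in Lemmas \ref{dom} and \ref{dom 2}. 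Two preliminary observations about the rescaled, suitably deformed contours: (i) \emph{Denominator separation.} From the explicit shapes of $\overline{\mc{\gamma}}_r$ (a long segment of the imaginary axis with two rays in directions $e^{\pm 2\pi\i/3}$ attached at its ends) and $\overline{\mc{C}}_n$ (which, after the deformation turning it into $\mc{C}_{-1}$ near the corner, runs through $-1$ into $\{\Re\le -1\}$, its remaining pieces lying far out in the left half-plane), there is a constant $c>0$, independent of $n$, with $|\overline{z}-\overline{\o}|\ge c$ and $|\overline{z}-\overline{\o}'|\ge c$ for all $\overline{z}\in\overline{\mc{\gamma}}_r$, $\overline{\o},\overline{\o}'\in\overline{\mc{C}}_n$. (ii) \emph{Crude bounds.} $|z|\ge\lambda$ on $\mc{\gamma}_r$ and $|\o|\ge c'n^{-1/3}$ on $\mc{C}_n$ (its closest approach to $0$ is the vertical strip at $-an^{-1/3}$), so $|z/\o|\le Cn^{1/3}|z|$; and $\Re f_1(z)\le -c''|z|$ for $|z|$ large on $\mc{\gamma}_r$, the linear term $tz$ dominating.

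To prove $|\overline{\mathsf{K}}_n(\overline{\o}_i,\overline{\o}_j)|\le M$ I would split according to whether the first argument $\overline{\o}$ lies in $\overline{\mc{C}}_n^{\epsilon}$. If $\overline{\o}\in\overline{\mc{C}}_n^{\epsilon}$, the integrand is absolutely dominated, by the computation of Lemma \ref{dom 2}, by $\overline{G}$, and \eqref{G bound} gives $\int_{\overline{\mc{\gamma}}_r}\overline{G}\,d\overline{z}\le L\,|e^{(-f_1'''(\lambda)+\delta)\overline{\o}^{3}}|$; since $\mc{C}_n$ is steep descent for $-f_1$ at $\lambda$ along its defining rays (which is how $q$ was fixed), $\Re(\overline{\o}^{3})$ is bounded below on $\overline{\mc{C}}_n^{\epsilon}$, so this is at most a constant, giving $|\overline{\mathsf{K}}_n(\overline{\o},\overline{\o}')|\le M_0$ (passing from $\overline{\o}'\in\overline{\mc{C}}_n^{\epsilon}$ to $\overline{\o}'\in\overline{\mc{C}}_n$ only replaces $|\overline{z}-\overline{\o}'|^{-1}$ in $\overline{G}$ by $c^{-1}$, using (i)). If instead $\overline{\o}\in\overline{\mc{C}}_n\setminus\overline{\mc{C}}_n^{\epsilon}$, Lemma \ref{C bound}(c) gives $\Re[n^{1/3}t(z-\o)+h_n(z)-h_n(\o)]\le -n^{1/3}\eta$ for all $z\in\mc{\gamma}_r$; combining this with (i), (ii), and the extra decay $\Re f_1(z)\le-c''|z|$ on the rays (needed only to make the $\overline{z}$-integral converge), one gets $|\overline{\mathsf{K}}_n(\overline{\o},\overline{\o}')|\le Cn^{4/9}e^{-n^{1/3}\eta'}\to 0$ for some $\eta'>0$. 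Taking $M=\max(M_0,\sup_n Cn^{4/9}e^{-n^{1/3}\eta'})$ finishes this part, and the second assertion of the lemma is precisely the second case just treated (relabel $\eta'$ as $\eta$ and $C$ as $L_4$). The prefactor $n^{4/9}$ is $n^{1/3}\cdot n^{1/9}$: the first factor from $|\o|^{-1}\le c'^{-1}n^{1/3}$, the second from the length of the rescaled inner contour $\overline{\mc{\gamma}}_r^{\epsilon}$, which grows like $n^{1/9}$.

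The one delicate point is the regime where $z$ and $\o$ are both near $\lambda$. There each of $h_n(z),h_n(\o)$ separately behaves like $n^{1/3}f_1(\lambda)+n^{1/9}f_2(\lambda)+O(1)$, so a naive bound on their difference is only $e^{O(n^{1/9})}$, which is useless, and Lemma \ref{C bound}(d) alone has the same weakness. The cancellation that saves the estimate is that the $n^{1/3}f_1(\lambda)$ and $n^{1/9}f_2(\lambda)$ terms drop out of the difference because $f_1'(\lambda)=f_1''(\lambda)=0$, so that after the substitution $z=\lambda+n^{-1/9}\overline{z}$ the exponent is genuinely $O(1)$ in $\overline{z}$ — which is exactly what \eqref{f_1}--\eqref{f_3} record. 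This is why the ``$\o$ near $\lambda$'' case is routed through the dominating function $\overline{G}$ of Lemma \ref{dom 2} rather than the coarse estimates of Lemma \ref{C bound}; away from $\lambda$, Lemma \ref{C bound}(c) furnishes an honest exponential gain of order $e^{-n^{1/3}}$ that comfortably absorbs all polynomial-in-$n$ losses.
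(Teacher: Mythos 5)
Your overall architecture is exactly the paper's: split according to whether the first argument lies in $\mc{C}_n^{\epsilon}$, use Lemma \ref{C bound}(c) together with Lemma \ref{chaos control} and the bounds $1/|\o|\leq n^{1/3}/a$, plus the $n^{1/9}$ from rescaling, to get the $L_4 n^{4/9}e^{-n^{1/3}\eta}$ decay when $\o\in\mc{C}_n\setminus\mc{C}_n^{\epsilon}$, and fall back on the dominating function $\overline{G}$ of Lemma \ref{dom 2} (i.e. \eqref{G bound}) for the entries whose first argument is near $\lambda$; your discussion of why the near-$\lambda$ case must be routed through the Taylor cancellation rather than Lemma \ref{C bound}(d) is also accurate.

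The one step that does not survive scrutiny as written is your preliminary observation (i), the claim of a uniform constant $c>0$ with $|\overline{z}-\overline{\o}|,|\overline{z}-\overline{\o}'|\geq c$ for \emph{all} $\overline{z}\in\overline{\mc{\gamma}}_r$ and $\overline{\o},\overline{\o}'\in\overline{\mc{C}}_n$, justified by "the deformation turning $\overline{\mc{C}}_n$ into $\mc{C}_{-1}$ near the corner.'' In the setting of this lemma no such deformation is available: the lemma is applied in Step 2 of Proposition \ref{cut} after Hadamard's bound, i.e.\ inside absolute values of kernel entries evaluated at points of the \emph{rescaled but undeformed} contour $\overline{\mc{C}}_n$, and once absolute values are taken Cauchy's theorem cannot be used to move either contour (the paper's deformation to $\mathcal{C}_{-1}$ is performed only on the $(\mc{C}_n^{\epsilon})^m$ part, while the integrand is still holomorphic). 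Moreover, on the undeformed contours the claim is simply false: $\mc{\gamma}_r$ and $\mc{C}_n$ both contain the point $\lambda$ (the vertical segment of $\mc{\gamma}_r$ and the rays $\lambda+ye^{\pm 2\pi\i/3}$ of $\mc{C}_n$ emanate from it), so $|\overline{z}-\overline{\o}'|$ is not bounded below uniformly. The correct replacement is the dichotomy the paper uses: if the relevant argument lies outside $B_{\epsilon}(\lambda)$ then the geometric separation of $\mc{\gamma}_r$ from $\mc{C}_n\setminus\mc{C}_n^{\epsilon}$ gives $|z-\o|\geq\epsilon/2$ (whence the factor $(2/\epsilon)^2$), while if both arguments lie in $\mc{C}_n^{\epsilon}$ the denominators are already built into $\overline{F},\overline{G}$ of Lemmas \ref{dom} and \ref{dom 2}; near $\lambda$ the only available lower bound is the angular one, $|z-\o'|\gtrsim \max(|z-\lambda|,|\o'-\lambda|)$ coming from the $\pi/6$ angle between the two contours, not a constant. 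Since you only invoke (i) in regimes where one of these correct substitutes applies, the proof is repairable without new ideas, but as stated that step would fail and its justification conflates the deformed and undeformed contours.
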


\begin{proof}
	By Lemma \ref{C bound}, for any $\epsilon>0$, there exists a $N,C>0$, such that if $v \in \mc{C}_{n}\setminus \mc{C}_n^{\epsilon}$, and $z \in \mathcal{\gamma}_r$, then for all sufficiently large $n$, we have
	$$\Re[h_n(z)-h_n(\o)+n^{1/3}t(z-\o)] \leq  -n^{1/3} \eta.$$
	For $z \in \mathcal{\gamma}_r$ and $\o, \o' \in \mc{C}_n \setminus \mc{C}_n^{\epsilon}$, $n>N$ we have the following bounds:
	$$
	\frac{1}{(z-\o)(z-\o')} \leq \left(\frac{2}{\epsilon}\right)^2, \qquad \frac{1}{\o} \leq \frac{n^{1/3}}{a},
	$$
	and
	\begin{align}
	|e^{n^{1/3}t(z-\o)+h_n(z)-h_n(\o)}| &\leq |e^{n^{1/3}(f_1(z)-f_1(\o)+\delta)}| \label{exponent 1} \\
	&\leq |e^{n^{1/3}(f_1(z)-f_1(\lambda)}||e^{n^{1/3}(f_1(\lambda)-f_1(\o)+\delta)}| \label{exponent 2}
	\end{align}
	where (\ref{exponent 1}) follows from (\ref{chaos control}) and the fact that $f_2$ is bounded away from $0$. Note that for $z \in \mathcal{\gamma}_r$, $|f_1(z)-f_1(\lambda)| \leq 0$, and for $\o, \o' \in \mc{C}_n \setminus \mc{C}_n^{\epsilon}$, $f_1(\lambda)-f_1(\o) + \delta< -\eta$, so (\ref{exponent 2}) is bounded above by
	$$|e^{(f_1(z)-f_1(\lambda)}||e^{-n^{1/3}\eta}|.$$
	Thus if we set $L_4=\frac{2^2}{a \epsilon^2} \int_{\mathcal{\gamma}_r} |z| |e^{f_1(z)-f_1(\lambda)}| dz< \infty$, we get
	$$|\mathsf{K}_n(\o,\o')| \leq  L_4 n^{1/3} e^{-n^{1/3} \eta}.$$
	So if we change the variable of integration to $d\overline{z}=n^{1/9} d z$ gives. 
	\begin{equation} 
	|\overline{\mathsf{K}}_{n}(\overline{\o}, \overline{\o}')| \leq L_4 n^{4/9} e^{-n^{1/3} \eta} \qquad  \text{for $\o,\o' \in \mc{C}_n \setminus \mc{C}_n^{\epsilon}$} \label{outer bound}
	\end{equation}
	Let $\o_1 \in \mc{C}_n \setminus \mc{C}_n^{\epsilon}$ and $\o_2,..,\o_m \in \mc{C}^n$, then for $i \neq 1$,
	\begin{align}|\overline{\mathsf{K}}_{n}(\overline{\o}_1, \overline{\o}_i)| &\leq L_4 n^{4/9} e^{-n^{1/3} \eta},\nonumber \\
	|\overline{\mathsf{K}}_{n}(\overline{\o}_i,\overline{\o}_j)| &\leq \max[Le^{(-f_1'''(\lambda)+\delta) \overline{\o}^{3}}, L_4 n^{4/9} e^{-n^{1/3} \eta}] \leq M.\end{align}
	The first equality follows from (\ref{G bound}) and the second inequality holds for large $n$, when we set $M=\max[L_4,L]$ because $-f_1'''(\lambda)+\delta<0$.
\end{proof}

The last thing we need to complete the proof of Theorem \ref{main theorem} is to bound (\ref{sum bound}) from Proposition (\ref{localize}). We do so in the following lemma.
\begin{lemma} \label{sum} For any $C>1$, we have
	$$\sum_{m=1}^{\infty} \frac{1}{m!} C^m m^{1+m/2} \leq 16 C^4 e^{2C^2}.$$
\end{lemma}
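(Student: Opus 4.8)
The plan is to bound the series by the Cauchy--Schwarz inequality, splitting the general term into a ``Gaussian-type'' factor that produces the exponential $e^{2C^2}$ and a purely geometric factor that sums to an absolute constant. Concretely, I would write $m^{1+m/2}=m\sqrt{m^m}$ and factor the summand as
$$\frac{C^m m^{1+m/2}}{m!}=u_m v_m,\qquad u_m=\frac{C^m m\, 3^{m/2}}{\sqrt{m!}},\qquad v_m=\frac{(m/3)^{m/2}}{\sqrt{m!}},$$
which checks out since $u_m v_m=\frac{C^m m\, m^{m/2}}{m!}$. All terms are positive, so Cauchy--Schwarz gives
$$\sum_{m=1}^{\infty}\frac{C^m m^{1+m/2}}{m!}\le\Bigl(\sum_{m=1}^{\infty}u_m^2\Bigr)^{1/2}\Bigl(\sum_{m=1}^{\infty}v_m^2\Bigr)^{1/2}.$$

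For the first factor I would use the elementary power-series identity $\sum_{m\ge0}\frac{m^2 y^m}{m!}=(y^2+y)e^y$ with $y=3C^2$, giving $\sum_m u_m^2=\sum_m\frac{(3C^2)^m m^2}{m!}=(9C^4+3C^2)e^{3C^2}$. For the second factor, the point of choosing base $3>e$ in the split is that $m!\ge(m/e)^m$ yields $v_m^2=\frac{(m/3)^m}{m!}\le(e/3)^m$, hence $\sum_m v_m^2\le\sum_{m\ge1}(e/3)^m=\frac{e}{3-e}<10$, a constant independent of $C$.

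Combining the two factors,
$$\sum_{m=1}^{\infty}\frac{C^m m^{1+m/2}}{m!}\le\sqrt{(9C^4+3C^2)e^{3C^2}}\cdot\sqrt{10}=\sqrt{10}\,\sqrt{9C^4+3C^2}\;e^{3C^2/2}.$$
Since $C>1$ we have $9C^4+3C^2\le12C^4$ and $e^{3C^2/2}\le e^{2C^2}$, so the right-hand side is at most $\sqrt{10}\cdot2\sqrt3\,C^2 e^{2C^2}=2\sqrt{30}\,C^2e^{2C^2}<16\,C^2e^{2C^2}\le16\,C^4e^{2C^2}$, which is the claim (indeed with a stronger power of $C$ than required).

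I do not expect a genuine obstacle here: the only real choice is the weight $3^{m/2}$ in the Cauchy--Schwarz split, whose base must be strictly larger than $e$ (so that $\sum v_m^2$ converges) and at most $4$ (so that the surviving exponential $e^{(\mathrm{base}/2)C^2}$ stays below $e^{2C^2}$); the value $3$ meets both constraints and keeps the constants clean. Everything else is routine bookkeeping — the identity for $\sum m^2 y^m/m!$, the bound $m!\ge(m/e)^m$, and the sum of a geometric series. If one prefers to avoid Cauchy--Schwarz, an alternative is to split the sum over even $m=2k$ and odd $m=2k+1$ and use $(2k)!/k!\ge k^k$ and $(2k+1)!/(k+1)!\ge(k+1)^k$ to reduce each piece to a series of the form $\sum_k k^2 (2C^2)^k/k!$; this gives the weaker bound $16C^3e^{2C^2}$, still sufficient.
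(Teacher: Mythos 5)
Your argument is correct: the factorization $u_mv_m$ does reproduce $C^m m^{1+m/2}/m!$, the identity $\sum_{m\ge 0} m^2y^m/m!=(y^2+y)e^y$ and the bound $m!\ge (m/e)^m$ are both valid, and the final numerics ($\sqrt{10}\cdot\sqrt{12}\,C^2e^{3C^2/2}=2\sqrt{30}\,C^2e^{3C^2/2}<16\,C^4e^{2C^2}$ for $C>1$) check out, so you in fact prove a slightly stronger bound. However, your main route is genuinely different from the paper's. The paper avoids Cauchy--Schwarz entirely: it uses the single elementary estimate $m^{1+m/2}/m!\le m\,2^{m/2}/\lfloor m/2\rfloor!$ (equivalently $m!/\lfloor m/2\rfloor!\ge (m/2)^{\lceil m/2\rceil}$), then splits the resulting series over even $m=2k$ and odd $m=2k+1$, reducing everything to exponential series in $2C^2$ and reading off $16C^4e^{2C^2}$ directly. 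What your approach buys is a cleaner separation of the exponential growth (the weighted square $\sum (3C^2)^m m^2/m!$) from a convergent geometric remainder, at the cost of introducing the weight $3^{m/2}$ whose base must sit in $(e,4]$ --- a choice that needs the small justification you gave; the paper's approach is more bare-hands and tracks the constant $16$ with no optimization. Note also that the parity-split alternative you sketch at the end (using $(2k)!/k!\ge k^k$) is essentially the paper's proof, so your ``backup plan'' is the published argument.
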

\begin{proof} We have 
	$$\frac{m^{1+m/2}}{m!} \leq \frac{m 2^{m/2}}{(\lfloor m/2 \rfloor)!},$$
	so that 
	\begin{align*}
	\sum_{m=1}^{\infty} \frac{1}{m!} C^m m^{1+m/2} &\leq \sum_{m=1}^{\infty} \frac{m}{(\lfloor m/2 \rfloor)!} (2C^2)^{m/2}\\
	&\leq \sum_{k=1}^{\infty} \frac{2k (2C^2)^k}{k!}+\sum_{k=1}^{\infty} \frac{(2k+1)(2C^2)^{k+1}}{k!} \\ 
	&\leq 16 C^4 e^{2 C^2}.
	\end{align*} 
	
\end{proof}

%
%
%
%
%
%
%
%
%
%

\providecommand{\bysame}{\leavevmode\hbox to3em{\hrulefill}\thinspace}
\providecommand{\MR}{\relax\ifhmode\unskip\space\fi MR }
\providecommand{\MRhref}[2]{%
	\href{http://www.ams.org/mathscinet-getitem?mr=#1}{#2}
}
\providecommand{\href}[2]{#2}

%

\end{document}